\newtheorem{theorem}{Theorem}[section]
\newtheorem{definition}[theorem]{Definition}
\newtheorem{prop}[theorem]{Proposition}
\newtheorem{lemma}[theorem]{Lemma}
\newtheorem{corollary}[theorem]{Corollary}
\newtheorem{remark}{Remark}
\newcommand{\vecz}{\mathbf{z}}
\newcommand{\vect}[1]{\mathbf{#1}}
\newcommand{\vecw}{\mathbf{w}}
\newcommand{\dm}{\mathrm dm_{n,p}^\hbar}
\newcommand{\dz}{\mathrm d\vecz}
\newcommand{\dzb}{\mathrm d\overline\vecz}
\newcommand{\E}{\mathcal E_{n,p}}
\newcommand{\T}{{\bf T}_{n,p}}
\newcommand{\To}{{\mathcal T}}
\newcommand{\Sn}{\mathbf{S}^n}
\newcommand{\vecx}{\mathbf{x}}
\newcommand{\vecy}{\mathbf{y}}
\newcommand{\vtheta}{\boldsymbol{\theta}}
\newcommand{\ds}{\mathrm d\mathbf S_n}
\newcommand{\bk}{{\bf k}}
\newcommand{\K}[1]{{{\bf K}_{n,#1}^\hbar}}
\newcommand{\Ku}{{{\bf K}_{n,-1}^\hbar}}
\newcommand{\B}{{{\mathfrak B}_{\hbar,p}}}
\newcommand{\Bemap}[1]{{{\mathbf B}^{\hbar}_{#1}}}
\newcommand{\Pro}{{{\mathbf P}}}
\newcommand{\Op}{\mathrm{Op}_\hbar^t}
\newcommand{\U}{{\mathbf U_{n,p}}}
\begin{document}
\title[Asymptotic expantion of covariant symbol on $\Sn$]
{Asymptotic expantion of covariant symbol on the complex unit sphere}
%{Covariant symbol on the  unit sphere of $\mathbb C^n$}
\author{Erik I. D\'{\i}az-Ort\'iz}
\address{CONACYT Research Fellow – Universidad Pedagógica Nacional - Unidad 201 Oaxaca}
\email{eidiazor@conacyt.mx}

\maketitle
%\tableofcontents
\begin{abstract}
	Starting from a complete family (not defined by the reproducing kernel) for the unit sphere $\Sn$ in the complex $n$-space $\mathbb C^n$, we obtain an asymptotic expansion for the associated Berezin transform. The proof involves the computation of the asymptotic behaviour of functions in the complete family. Furthermore, we prove an Egorov-type theorem for the covariant symbol related to a pseudo-differential operator on $L^2(\Sn)$.
\end{abstract}

\section{Introduction and summary}
Let $\mathcal B_{\mathbb C^n}$ be the Bargmann spaces of all entire functions on $\mathbb C^n$ square integrable with respect to the Gaussian measure $\mathrm dv_n^\hbar (\vect z) = (\pi\hbar)^{-n} e^{-|\vect z|^2/\hbar}\mathrm d\vect z \mathrm d\overline {\vect z}$, $\hbar >0$, with $\vect z = (z_1 , \ldots , z_n)$, $|\vect z|^2 =|z_1|^2+\ldots + |z_n|^2$ and $\mathrm d\vect z \mathrm d\overline {\vect z}$ Lebesgue measure on $\mathbb C^n$.  

It is know that the Bargmann space $\mathcal B_{\mathbb C^n}$ enjoys the property of having a reproducing kernel $k_\vect w^\hbar(\vect z)=e^{\vect z\cdot \vect w/\hbar}$,  where $\vect z\cdot\vect w=z_1\overline w_1 + \cdots+z_n\overline w_n$ denotes the usual inner product in $\mathbb C^n$, and for all $f\in \mathcal B_{\mathbb C^n}$ the following
equation holds:
\begin{equation*}
f(\vect z)= \int_{\mathbb C^n} f(\vect w) k_\vect w^\hbar(\vect z) \mathrm dv_n^\hbar(\vect w)=\langle f, k_\vect z^\hbar \rangle.
\end{equation*}

Recall that for $f \in L^\infty(\mathbb C^n)$, the Berezin transform $\mathbf B^\hbar(f)$ of $f$ is the function on $\mathbb C^n$ defined by
\begin{equation*}
\mathbf B^\hbar(f)(\vect z)=\frac{\langle f k_\vect z^\hbar, k_\vect z^\hbar\rangle}{\langle k_\vect z^\hbar,k_\vect z^\hbar\rangle}= 
\big(k_\vect z(\vect z)\big)^{-1} \int_{\mathbb C^n} f(\vect w) |k_\vect z(\vect w)|^2\mathrm dv_n^\hbar(\vect w).
\end{equation*} 
Explicitly, $\mathbf B^\hbar (f)(\vect z)= (\pi \hbar)^{-n} \int f(\vect w) e^{-|\vect z -\vect w|^2/\hbar}\mathrm d \vect w \mathrm d \overline{ \vect w}$ which is just the standard formula for the solution at time $t = \hbar/4$ of the heat equation on $\mathbb C^n = \mathbb R^{2n}$ with initial data $f$. It follows that for $f \in L^\infty(\mathbb C^n)$ a smooth function in a neighbourhood of $\vect z$, its Berezin transform  has the following asymptotic expansion  when $h$ goes to zero% $ \searrow 0$%, there is an asymptotic expansion
\begin{equation*}
\mathbf B^\hbar(f)(\vect z)= f(\vect z) + \hbar \frac{1}{1! 4}\partial_{\vect w \overline{\vect w}} + \hbar^2 \frac{1}{2! 4^2} \partial_{\vect w \overline{\vect w}} + \cdots, 
\end{equation*} 
with $\partial_{\vect w \overline{\vect w}}=\sum_{j=1}^n \partial^2/\partial w_j \partial \overline w_j$ denoting the Laplace operator.

It turns out that this kind of situation prevails in much greater generality. Namely, consider a domain $\Omega \in \mathbb C^n$ equiped with a K\"ahler form $\omega$, i.e. there must exist a strictly plurisubharmonic real-valued smooth function $\Phi$ such that $\omega=\partial \overline \partial \Phi$. For any $\hbar > 0$, we take the weighted Bergman spaces $\mathcal HL^2(\Omega, e^{-\Phi/\hbar}\mathrm d \mu)$ of all holomorphic functions in $L^2(\Omega,e^{-\Phi/\hbar}\mathrm d \mu)$, where $\mathrm d \mu(\vect z)= \mathrm{det}([g_{i\bar j}])\ \mathrm d\vect z \mathrm d\overline{\vect z}$ with $g_{i\bar j}=\partial^2 \Phi/\partial z_i \partial \overline z_j$. These spaces enjoys the property of having a reproducing kernel $K^\hbar(\vect z, \vect w)$. The corresponding Berezin transform is given by 
\begin{equation*}
\mathbf B^\hbar (f) (\vect z)=\int_\Omega f(\vect w) \frac{|K^\hbar(\vect z,\vect w)|^2}{K^\hbar(\vect z,\vect z)} e^{-\Phi(\vect w)/\hbar} \mathrm d\mu(\vect w).
\end{equation*}
Furthermore, for $f \in L^\infty(\Omega)$ one has the Toeplitz operator $\To_f$ with symbol $f$, namely, the operator on $\mathcal HL^2(\Omega, e^{-\Phi/\hbar}\mathrm d \mu)$ defined by $\To_f(\psi) = P_h( f\psi)$ , where $P_h: L^2(\Omega,e^{-\Psi/\hbar}\mathrm d\mu) \to \mathcal HL^2(\Omega, e^{-\Phi/\hbar}\mathrm d \mu)$ is the orthogonal projection.

Now, assume that $\Omega$ is a bounded symmetric domain and $e^\Psi$ is the Bergman kernel of $\Omega$; or that $\Omega$ is smoothly bounded and strictly pseudoconvex, and $e^{-\Psi}$ is a defining function for $\Omega^2$;  or that $\Omega= \mathbb C^n$ and $\Psi(\vect z ) = | \vect z |^2$. Then as $\hbar \searrow 0$, there are asymptotic expansions \cite{B74,E02,B-M-S94,C92} 
\begin{align}
K^\hbar(\vect z, \vect z)& = e^{\Phi(\vect z)/\hbar} \hbar^{-n} \sum_{\ell=0}^\infty \hbar^\ell b_\ell(\vect z);\label{asymptotic reproducing kernel}\\
\mathbf B^\hbar(f) & = \sum_{\ell=0}^\infty \hbar^\ell Q_\ell f \label{asymptotic covariant symbol}
\end{align}
for some functions $b_\ell \in C^\infty(\Omega)$, with $b_0=1$; some differential operators $Q_\ell$, with $Q_0$ the identity operator and $Q_1$ the Laplace–Beltrami operator with respect to the metric $g_{i\bar j}$.

On the other hand, in Ref. \cite{B74} Berezin defined the covariant symbol of a bounded linear operator $A$ on a Hilbert space $H$ endowed with the inner product $(\cdot,\cdot)$ as the complex-valued function $\mathbf B(A)$ on a set $M$ by
\begin{equation*}
\mathbf B(A)(\alpha)= \frac{(A \mathbf K(\cdot,\alpha),\mathbf K(\cdot,\alpha))}{(\mathbf K(\cdot,\alpha),\mathbf K(\cdot,\alpha))}, \quad \alpha \in M,
\end{equation*}
where the family $\{\mathbf K(\cdot,\alpha) \in H\;|\;\alpha \in M\}$ forms a complete system for $H$.

Under this definition, the Berezin transform makes sense not only considering the complete family $\{\mathbf K(\cdot, \alpha)\}$ as the reproducing kernel with one freezing variable, as in the weighted Bergman spaces, but for any complete family.  Hence, it is of interest to investigate if the spaces that have a complete family (not defined by the reproducing kernel) satisfy properties similar to (\ref{asymptotic reproducing kernel}) and (\ref{asymptotic covariant symbol}).
%Hence, it is of interest to investigate in case of having a space with a complete family (not defined by the reproduction kernel), whether the  Berezin transform obtained by said family has an asymptotic expansion  analogous to (\ref{asymptotic covariant symbol}).

One such candidate, namely, the Hilbert space $\mathcal O$ of all functions in $L^2(\Sn)$ whose Poisson extension into the interior of
$\Sn$ is holomorphic, where $\Sn = \{\vect x \in \mathbb C^n |\; |x_1|^2 + \cdots + |x_n |^2 = 1\}$ and $L^2(\Sn)$ denotes the Hilbert space of square integrable functions with respect to the normalized surface measure
$\mathrm d\Sn(\vect x)$ on $\Sn$ and endowed with the usual inner product
\begin{equation}
\langle \phi,\psi\rangle_{\Sn}=\int_{\Sn} \phi(\vect x) \overline{\psi(\vect x)} \mathrm d \Sn(\vect x), \quad \phi,\psi \in L^2(\Sn).
\end{equation}

In Ref. \cite{D-18} it is defined a complete family $\mathcal K_p=\{\K{p}(\cdot,\vect z)| \vect z \in \mathbb C^n\}$ for $\mathcal O$   and the associated covariant symbol. %(see Subsections \ref{section coherent states} and \ref{section Berezin symbolic calculus} respectively for details). 
The functions in  $\mathcal K_p$ are not obtained by the reproducing kernel and its definition is in terms of a suitable power series of the inner product $\vect x \cdot \vect z/\hbar$ which is an infinite series that is not in a closed form like an exponential function.

%The aim of the present paper is to show an analogue of the asymptotic expansion (\ref{asymptotic covariant symbol}) for the Berezin transform associated with the family $\mathcal K_p$.

%the functions $\K{p}(\cdot,\vect z)$, $\vect z \in \mathbb C^n$, and the associated Berezin transform, respectively.

The aim of the present paper is to show an analogue of the asymptotic expansions (\ref{asymptotic reproducing kernel}) and (\ref{asymptotic covariant symbol}) for the functions $\K{p}(\cdot,\vect z)$, $\vect z \in \mathbb C^n$, and the associated Berezin transform, respectively.

The paper is organized as follows. Section \ref{section covariant symbol} is devoted to give a brief description and some properties of the family  $\mathcal K_p$ and the associated covariant symbol.

Analogues of the formula (\ref{asymptotic reproducing kernel}) for the asymptotic behaviour as $h  \searrow 0$ of a function in $\mathcal K_p$ is established in Section \ref{section semiclassical properties}. This formula is obtained following the work of Thomas and Wassell \cite{T-W95}. Moreover, to give rigorous proofs of subsequent theorems, we estimate the derivative of any order of the series defining $\K{p}(\vect x,\vect z)$.

Using the asymptotic expansion of functions in $\mathcal K_{p}$ and the stationary phase method, in Section \ref{section Asymptotic expansion of the Berezin transform} we get the asymptotic behaviour of the Berezin transform.

Moreover, since the covariant symbol is not only defined for Toeplitz operators but for any bounded linear operator, it is natural to ask if there exists an analogue of the asymptotic expansion obtained in Section \ref{section Asymptotic expansion of the Berezin transform} for the covariant symbol of more general operators than Toeplitz operators. For this reason, in  Section \ref{section Egorov-type theorem} we prove an Egorov-type theorem which relates the principal symbol of a given semiclassical pseudo-differential operator of order zero acting on $L^2(\Sn)$ (see Appendix \ref{appendix pseudo-differential operators} where we give a brief description of what we mean by semiclassical pseudo-differential operators on a manifold) with its covariant symbol in the semiclassical limit $\hbar \searrow 0$.

%To show that any function in $\mathcal F_p$ has an asymptotic expansion similar to (\ref{asymptotic reproducing kernel}), we follow the work of Thomas and Wassell \cite{T-W95}. Moreover, to give rigorous proofs of subsequent theorems, we estimate the derivative of any order of the series defining $\K{p}(\vect x,\vect z)$ .

%We start recalling some results of Berezin's theory that will be used below. See Ref. \cite{B74} for details.
Throughout the paper, we will use the following  basic notation. For every $\vect z, \vect w \in \mathbb C^k$, $\vect z=(z_1,\ldots,z_k)$, $\vect w=(w_1,\ldots,w_k)$, and for every multi-index $\boldsymbol{ \ell}=(\ell_1,\ldots,\ell_k)\in \mathbb Z_+^k$ of length $k$, where $\mathbb Z_+$ is the set of nonnegative integers, let 
\begin{displaymath}
\vect z\cdot\vect w=\sum_{s=1}^k z_s\,\overline w_s, \hspace{.5cm} |\vect z|=\sqrt{\vect z \cdot \vect z},\hspace{.5cm}|\boldsymbol \ell|=\sum_{s=1}^k \ell_s, \hspace{.5cm} \boldsymbol{\ell} !=\prod_{s=1}^k \ell_s! , \hspace{.5cm} \vect z^{\boldsymbol \ell}=\prod_{s=1}^k z_s^{\ell_s}.
\end{displaymath}

Given $\omega\in \mathbb C$, let us denote its real and imaginary parts by $\Re(\omega)$ and $\Im(\omega)$ respectively.

Whenever convenient, we will abbreviate $\partial/\partial v_j, \partial/\partial \overline v_j$, etc., to $\partial_{v_j}, \partial_{\overline v_j}$, etc., respectively, $\partial_{v_1} \partial_{v_2}\ldots \partial_{v_k}$ to $\partial_{v_1v_2\cdots v_k}$, and $\partial_{\vect z}^{\vect \ell}=\partial_{z_1}^{\ell_1}\cdots \partial_{z_k}^{\ell_k}$ with $\partial_{z_j}^{\ell_j}=\underbrace{\partial_{z_j}\cdot\ldots\cdot{\partial_{z_j}}}_{{\ell_j}}$.

\section{The covariant symbol on $\mathcal O$} \label{section covariant symbol}
In this section we introduce the covariant symbol of a bounded linear operator with domain in $\mathcal O$ and some of properties it satisfies. In order to define this symbol, let us start by defining the functions $\K{p}(\cdot,\vect z)$, $\vect z \in \mathbb C^n$, that form a complete family $\mathcal K_p$ in $\mathcal O$ that is not obtained by the reproducing kernel. See Ref. \cite{D-18} for details.

\subsection{The functions $\K{p}$}\label{section coherent states}
 
Follows Ref. \cite{D-18}, let us consider the set of functions $\mathcal K_p=\{\K{p}(\cdot,\vect z)| \vect z \in \mathbb C^n\} \subset \mathcal O$, with
\begin{equation}\label{coherent states}
\K{p}(\vecx,\vecz):=\sum_{\ell=0}^\infty \frac{c_{\ell,p}}{\ell!}\left(\frac{\vecx \cdot \vecz}{\hbar}\right)^\ell, \quad (c_{\ell,p})^2= \frac{(n)_\ell}{(n+p)_\ell},\quad p>-n, \quad \vect x \in \Sn
\end{equation}
where $(a)_\ell$ stands for the Pochhammer symbol (raising factorial), $(a)_\ell=a (a+1)\cdots (a+\ell-1)=\frac{\Gamma(a+\ell)}{\Gamma(a)}$. 

In Ref. \cite{D-18} it is shown that the family $\mathcal K_p$ satisfies the conditions for defining  a Berezin symbolic calculus on $\mathcal O$, i.e. the family $\mathcal K_p$ satisfies the following two properties:

(\textbf I)	The family $\mathcal K_p$ forms a complete system for $\mathcal O$: for all $\Phi,\Psi \in \mathcal O$, Parseval's identity is valid
\begin{align*}
\langle \Phi,\Psi\rangle _{{\Sn}} & = \int_{\mathbb C^n} \langle \Phi,\K{p}(\cdot,\vecz)\rangle _{{\Sn}} \langle \K{p}(\cdot,\vecz),\Psi) \rangle _{{\Sn}}\dm(\vecz)\;,
\end{align*}
where 
\begin{equation}\label{measure on U}
\dm(\vecz)=\frac{1}{\Gamma(n+p)}\frac{2}{(\pi\hbar^2)^n}\left(\frac{|\vecz|}{\hbar}\right)^p \mathrm K_p\left(2\frac{|\vecz|}{\hbar}\right) \dz \dzb \;, \hspace{0.5cm} p>-n
%\frac{4}{\Gamma(n)\Gamma(n-p)}\frac{1}{\hbar^{2n-p}}|\vecz|^{-p} \mathrm K_p\left(2\frac{|\vecz|}{\hbar}\right) \dz \dzb 
\end{equation}
with $\dz\dzb$ denoting Lebesgue measure on $\mathbb C^n$ and $\Gamma$, $\mathrm K_\nu$ denoting the Gamma and MacDonald-Bessel function of order $\nu$, respectively (see Sections 8.4 and 8.5 of Ref. \cite{G94} for definition and expressions for these special functions).

(\textbf {II}) The map $\U: L^2(\Sn) \to L^2(\mathbb C^n, \dm)$ defined by 
\begin{equation}\label{definition operator U_{n,p}}
\U \Psi(\vect z)=\langle \Psi,\K{p}(\cdot,\vect z)\rangle_{\Sn}
\end{equation}
 is an embedding, where $L^2(\mathbb C^n , \dm)$ denotes the Hilbert space of square integrable functions on $\mathbb C^n$ with respect to the measure $\dm$ and endowed with the inner product
\begin{equation*}
(f,g)_p = \int_{\vecz \in \mathbb C^n} f(\vecz)\overline{g(\vecz)} \dm(\vecz)\;,\hspace{0.5cm} f,g \in L^2(\mathbb C^n , \dm).
\end{equation*}

In fact, in Ref. \cite{D-18} it is proved that the operator $\U$ is unitary onto the Hilbert space $\E$ of entire functions $f$ defined on $\mathbb C^n$ such that $||f||_p=\sqrt{(f,f)_p}$ is finite.

Moreover, for any $\vect z, \vect w \in \mathbb C^n$,
\begin{equation}\label{U_{n,p} of function K_{n,p}}
\U\left(\K{p}(\cdot,\vect w)\right)(\vect z)= \T(\vect z,\vect w),
\end{equation}
 where 
\begin{equation}\label{kernel}
\T(\vecz,\vecw)= \Gamma(n+p) \left(\frac{\vecz \cdot \vecw}{\hbar^2}\right)^{\frac{1}{2}(-p-n+1)} \mathrm I_{n+p-1} \left(\frac{2 \sqrt{\vecz \cdot \vecw}}{\hbar}\right),
\end{equation}
with $\mathrm I_k$ denoting the modified Bessel function of the first kind of order $k$ (see Secs. 8.4 and 8.5 of Ref. \cite{G94} for definition and expressions for this special function).

\subsection{The associated covariant symbol to the family $\mathcal K_p$}\label{section Berezin symbolic calculus}
Since the functions in $\mathcal K_p$ satisfy the properties (\textbf I) and (\textbf {II}),  %(see Section \ref{section coherent states}), 
the Berezin's theory allow us to consider the following

\begin{definition}
	The covariant symbol $\B(A)$ of an operator $A$ with domain in $\mathcal O$ is defined as
	\begin{equation}\label{definition Berezin transform}
	\B( A)(\vecz)=\frac{\langle A\K{p}(\cdot,\vecz),\K{p}(\cdot,\vecz)\rangle_{{\Sn}}}{\langle\K{p}(\cdot,\vecz),\K{p}(\cdot,\vecz)\rangle_{{\Sn}}}\;,\quad \vecz \in \mathbb C^n.
	\end{equation}
\end{definition}

Note that this definition makes sense since the denominator is positive by  the relation $||\K{p}(\cdot,\vecz)||_{\Sn}^2=\T(\vect z,\vect z)$ (see Eqs. (\ref{definition operator U_{n,p}}) and (\ref{U_{n,p} of function K_{n,p}})) and Eq. (\ref{kernel}). Moreover, since the functions in $\mathcal K_p$ are continuous,  if $A:\mathcal O\to \mathcal O$ is an bounded operator, its covariant symbol can be extended uniquely to a function defined on a neighbourhood of the diagonal in $\mathbb C^n \times \mathbb C^n$  in such a way that it is holomorphic in the first factor and anti-holomorphic in the second. In fact, such an extension is given explicitly by
\begin{equation}\label{extended covariant symbol}
\B(A)(\vecw,\vecz):=\frac{\langle A\K{p}(\cdot,\vecz),\K{p}(\cdot,\vecw)\rangle_{{\Sn}}}{\langle\K{p}(\cdot,\vecz),\K{p}(\cdot,\vecw)\rangle_{{\Sn}}}.
\end{equation}

On the other hand, to every $\Phi \in C^\infty(\Sn)$, with $C^\infty(\Sn)$ denoting the algebra of complex-valued $C^\infty$ functions on $\Sn$, is associated a linear operator $\To_\Phi$ -the Toeplitz operator with symbol $\Phi$- that is defined for $\psi \in \mathcal O$ by
\begin{displaymath}
\To_\Phi (\psi)(\vect x)= \Pro \left(\Phi \psi\right)(\vect x), \quad \vect x \in \Sn
\end{displaymath}
where $\Pro:L^2(\Sn) \to \mathcal O$ is the orthogonal projection.

Starting from $\Phi\in C^\infty(\Sn)$, we can assign to it its Toeplitz operator $\To_{\Phi}$ and then assign to $\To_{\Phi}$ the covariant symbol $\B(\To_{\Phi})$. It is an element of $C^\infty(\mathbb C^n)$. Altogether we obtain a map $\Phi \mapsto \Bemap{p}(\Phi):=\B(\To_{\Phi})$.

%\textcolor{red}{On the other hand, starting from $\Phi\in C^\infty(\Sn)$, with $C^\infty(\Sn)$ denoting the algebra of complex-valued $C^\infty$ functions on $\Sn$, we can assign to it its Toeplitz operator $\To_{\Phi}$ and then assign to $\To_{\Phi}$ the covariant symbol $\B(\To_{\Phi})$. It is an element of $C^\infty(\mathbb C^n)$. Altogether we obtain a map $\Phi \mapsto \Bemap{p}(\Phi):=\B(\To_{\Phi})$.}

\begin{definition}\label{definition Berezin transform}
	The map $\Bemap{p}:C^\infty (\Sn) \to C^\infty(\mathbb C^n)$ defined for $\Phi \in C^\infty (\Sn)$ by
	\begin{equation}\label{eq. Berezin transform}
	\Bemap{p} (\Phi)=\B(\To_{\Phi})
	\end{equation}
	is called Berezin transform.
\end{definition}
%\textcolor{red}{From the point of view of Berezin's approach [2] the operator $\To_\Psi$ has as a contravariant symbol $\Phi$. Hence $\Bemap{p}$ gives a correspondence between contravariant symbols and covariant symbols of operators.}

%Let $ \mathrm{SU}(n)$ the group of $n \times n$ unitary matrices with unit determinant. Let $U \in  \mathrm{SU}(n)$ and $\mathrm T_U:L^2(\Sn) \to L^2(\Sn)$ be the operator defined by $\mathrm T_U \Psi(\vect x)=\Psi(U^{-1}\vect x)$ with $\Psi \in L^2(\Sn)$.
We end this section by showing some properties of the extended covariant symbol and the Berezin transform that we will use to obtain the asymptotic expansion of the covariant symbol.

\begin{prop}\label{extended covariant symbol T_U}
	Let $U \in \mathrm{SU}(n)$ (the group of $n \times n$ unitary matrices with unit determinant) and $\mathrm T_U:L^2(\Sn) \to L^2(\Sn)$ be the operator defined by $\mathrm T_U \Psi(\vect x)=\Psi(U^{-1}\vect x)$ with $\Psi \in L^2(\Sn)$. Let $A$ be a bounded linear operator with domain in $L^2(\Sn)$. Then:
	\begin{enumerate}
		\item For $ \vect w,\vect z \in \mathbb C^n$
		\begin{equation*}
		\B(A)(\vect w,\vect z)=\B(\mathrm T_{U^{-1}} A \mathrm T_U)(U^{-1}\vect w, U^{-1} \vect z),
		\end{equation*}
		In particular, $\B(A)=\mathrm T_{U}\B(\mathrm T_{U^{-1}} A \mathrm T_{U})$ if $\vect z=\vect w$.
		\item The Berezin transform $\Bemap{p}$ is invariant under the orthogonal transformations $\mathrm T_U$, i.e $\Bemap{p} \circ \mathrm T_U= \mathrm T_U\circ \Bemap{p}$.
	\end{enumerate}
	 \end{prop}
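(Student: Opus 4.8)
The plan is to reduce everything to the transformation behaviour of the generating functions $\K{p}(\cdot,\vect z)$ under the unitary group, and then push this through the definitions of the (extended) covariant symbol and the Berezin transform. The single computation on which both parts rest is the equivariance identity
\begin{equation*}
\mathrm T_U \K{p}(\cdot,\vect z) = \K{p}(\cdot, U\vect z), \qquad U \in \mathrm{SU}(n),\ \vect z \in \mathbb C^n.
\end{equation*}
This follows at once from the defining power series \eqref{coherent states}: since $\mathrm T_U \K{p}(\cdot,\vect z)(\vect x) = \K{p}(U^{-1}\vect x,\vect z)$, and $(U^{-1}\vect x)\cdot\vect z = \vect x\cdot(U\vect z)$ because $U$ is unitary, each term $\frac{c_{\ell,p}}{\ell!}\big((U^{-1}\vect x)\cdot\vect z/\hbar\big)^\ell$ equals $\frac{c_{\ell,p}}{\ell!}\big(\vect x\cdot U\vect z/\hbar\big)^\ell$, and the series converges (as established in Section \ref{section covariant symbol}), so the identity holds.

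For part (1), I would start from the extended covariant symbol \eqref{extended covariant symbol}. Since $\mathrm T_U$ is unitary on $L^2(\Sn)$ we have $\mathrm T_U^* = \mathrm T_{U^{-1}}$, hence for any bounded $A$,
\begin{equation*}
\langle \mathrm T_{U^{-1}} A \mathrm T_U\, \K{p}(\cdot,U^{-1}\vect z),\ \K{p}(\cdot,U^{-1}\vect w)\rangle_{\Sn}
= \langle A\, \mathrm T_U\K{p}(\cdot,U^{-1}\vect z),\ \mathrm T_U\K{p}(\cdot,U^{-1}\vect w)\rangle_{\Sn}.
\end{equation*}
Applying the equivariance identity with $U^{-1}\vect z$ and $U^{-1}\vect w$ in place of $\vect z$ turns the right-hand side into $\langle A\,\K{p}(\cdot,\vect z),\ \K{p}(\cdot,\vect w)\rangle_{\Sn}$. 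Doing the same for the denominator (the case $A = \mathrm{Id}$, equivalently using \eqref{kernel} and \eqref{U_{n,p} of function K_{n,p}}) gives $\B(\mathrm T_{U^{-1}} A \mathrm T_U)(U^{-1}\vect w,U^{-1}\vect z) = \B(A)(\vect w,\vect z)$, which is the claimed formula. Setting $\vect w = \vect z$ and recalling that the action $\vect z \mapsto U^{-1}\vect z$ on $\mathbb C^n$ corresponds to the operator $\mathrm T_U$ on functions of $\vect z$ yields $\B(A) = \mathrm T_U \B(\mathrm T_{U^{-1}} A \mathrm T_U)$.

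For part (2), I would show first that Toeplitz operators are equivariant: $\mathrm T_{U^{-1}}\To_\Phi \mathrm T_U = \To_{\mathrm T_{U^{-1}}\Phi}$. This needs that the projection $\Pro:L^2(\Sn)\to\mathcal O$ commutes with $\mathrm T_U$, which holds because $\mathcal O$ is $\mathrm{SU}(n)$-invariant (the Poisson extension intertwines the $\mathrm{SU}(n)$-actions, so $\mathrm T_U$ preserves holomorphicity of the extension); combining $\mathrm T_U\Pro = \Pro\mathrm T_U$ with the pointwise identity $\mathrm T_{U^{-1}}(\Phi\cdot\mathrm T_U\psi) = (\mathrm T_{U^{-1}}\Phi)\cdot\psi$ gives the equivariance of $\To_\Phi$. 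Then, specializing part (1) to $A = \To_{\mathrm T_U\Phi}$ and using $\mathrm T_{U^{-1}}\To_{\mathrm T_U\Phi}\mathrm T_U = \To_\Phi$, we get $\Bemap{p}(\mathrm T_U\Phi) = \B(\To_{\mathrm T_U\Phi}) = \mathrm T_U\B(\To_\Phi) = \mathrm T_U\Bemap{p}(\Phi)$, i.e. $\Bemap{p}\circ\mathrm T_U = \mathrm T_U\circ\Bemap{p}$.

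The only genuinely delicate point is the commutation $\Pro\,\mathrm T_U = \mathrm T_U\,\Pro$, i.e. the $\mathrm{SU}(n)$-invariance of $\mathcal O$; everything else is bookkeeping with the unitarity of $\mathrm T_U$ and the explicit series \eqref{coherent states}. I expect this invariance to be either already recorded in Ref. \cite{D-18} or immediate from the fact that the Poisson (harmonic) extension to the ball commutes with the rotation action, so it can be invoked rather than reproved here.
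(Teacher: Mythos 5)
Your proposal is correct and follows essentially the same route as the paper: the equivariance identity $\mathrm T_U\K{p}(\cdot,\vect z)=\K{p}(\cdot,U\vect z)$ (coming from the $\mathrm{SU}(n)$-invariance of the inner product on $\mathbb C^n$), the unitarity of $\mathrm T_U$ (from the invariance of $\mathrm d\Sn$), and the definitions \eqref{extended covariant symbol} and \eqref{eq. Berezin transform}. The only difference is that you spell out the Toeplitz equivariance $\mathrm T_{U^{-1}}\To_\Phi\mathrm T_U=\To_{\mathrm T_{U^{-1}}\Phi}$ and the commutation $\Pro\,\mathrm T_U=\mathrm T_U\,\Pro$ needed for part (2), which the paper's one-line proof leaves implicit; this is a correct and welcome addition, not a deviation.
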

\begin{proof}
	Since the inner product in $\mathbb C^n$ is $\mathrm{SU}(n)$-invariant, we have $\K{p} (\cdot,\vect z)=\mathrm T_{U} \K{p}(\cdot,U^{-1}\vect z)$. 
	From the $\mathrm{SU}(n)$-invariance of  $\mathrm d \Sn$ and definitions of the extended covariant symbol and the Berezin transform (see Eq. (\ref{extended covariant symbol}) and Definition \ref{definition Berezin transform}) we  conclude the proof of Proposition \ref{extended covariant symbol T_U}.	
%	Eq. (\ref{extended covariant symbol})  and we can proof (1). In a similar way, we can proof (2).
%	conclude the proof of Proposition \ref{extended covariant symbol T_U}.
\end{proof}

\section{Semiclassical properties of the functions in $\mathcal K_p$} \label{section semiclassical properties}
Consider the set of functions $\mathcal K_p$ defined in subsection \ref{section coherent states}. The main goal of this section is to show semiclassical properties of the functions in $\mathcal K_p$, which in turn will allow us to obtain asymptotic expansions of the Berezin transform and the covariant symbol.

\subsection{Estimate of the inner product of functions in the complete family $\mathcal K_p$}
%Let $\vect z, \vect w \in \mathbb C^n-\{\vect 0\}$ and $\K{-1}(\cdot,\vect z)$, $\K{-1}(\cdot,\vect w)$ be two functions in $\mathcal K$. 
From Eqs.  (\ref{definition operator U_{n,p}}), (\ref{U_{n,p} of function K_{n,p}}), 	(\ref{kernel}) and the fact that the modified Bessel function $\mathrm I_\vartheta$, $\vartheta \in \mathbb R$, has the following asymptotic expression when $|\omega| \to \infty$ (see formula 8.451-5 of Ref. \cite{G94})
\begin{equation}\label{asymptotic expression modified Bessel function}
\mathrm I_\vartheta(\omega)=\frac{\mathrm e^\omega}{\sqrt{2\pi\omega}}\sum_{k=0}^\infty \frac{(-1)^k}{(2\omega)^k}\frac{\Gamma(\vartheta+k+\frac{1}{2})}{k!\Gamma(\vartheta-k+\frac{1}{2})}\;,\hspace{0.5cm} |\mathrm{Arg}(\omega)|<\frac{\pi}{2}\;.
\end{equation}
we can obtain the asymptotic expansion for the inner product of two functions in $\mathcal K_p$: 
\begin{prop}\label{proposition inner product coherent states}
	Let $n\ge 1$, $p>-n$ and $\vecz,\vecw \in \mathbb C^n$. Assume $\vect z \cdot \vect w \ne 0$ and $|\mathrm{Arg}(\vect z \cdot \vect w)| <  \pi$, then for $\hbar \to 0$
	\begin{align}
	\left\langle \K{p}(\cdot,\vect w),\K{p}(\cdot,\vect z)\right\rangle _{{\Sn}} & = \T(\vect z,\vect w) \\
	& = \frac{\Gamma(n+p)}{2\sqrt \pi} \left(\frac{\vect z \cdot \vect w}{\hbar^2}\right)^{\frac{1}{2}(-p-n+\frac{1}{2})} \mathrm{e}^{\frac{2}{\hbar}\sqrt{\vect z \cdot \vect w}}\biggl[1 -\nonumber\\
	& \quad\left.\frac{(n+p-\frac{3}{2})(n+p-\frac{1}{2})}{4\sqrt{\vect z\cdot\vect w}}\hbar+\mathrm O(\hbar^2)\right].\label{asymptotic inner product coherent states}
	\end{align}
	In particular, for $\vect z \in \mathbb C^n-\{\vect 0\}$:
	\begin{align}
	|| \K{p}(\cdot,\vecz)||_{\Sn}^2 & = \frac{\Gamma(n+p)}{2\sqrt \pi} \left(\frac{|\vecz|}{\hbar}\right)^{-p-n+\frac{1}{2}} \mathrm{e}^{\frac{2}{\hbar}|\vecz|} \biggl[1-\nonumber\\
	& \hspace{3cm}\frac{(n+p-\frac{3}{2})(n+p-\frac{1}{2})}{4|\vect z|}\hbar+\mathrm O(\hbar^2)\biggl].\label{norm coherent states}
	\end{align}
\end{prop}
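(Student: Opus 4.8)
The plan is to obtain the expansion by reading it off the closed form (\ref{kernel}) for $\T$ and inserting the standard asymptotics (\ref{asymptotic expression modified Bessel function}) of the modified Bessel function $\mathrm I_\vartheta$; the whole argument is essentially a change of normalization, so no new analytic input is needed. First I would dispose of the equality $\langle\K{p}(\cdot,\vect w),\K{p}(\cdot,\vect z)\rangle_{\Sn}=\T(\vect z,\vect w)$: evaluating the definition (\ref{definition operator U_{n,p}}) of $\U$ at $\Psi=\K{p}(\cdot,\vect w)$ gives $\U\bigl(\K{p}(\cdot,\vect w)\bigr)(\vect z)=\langle\K{p}(\cdot,\vect w),\K{p}(\cdot,\vect z)\rangle_{\Sn}$, and by (\ref{U_{n,p} of function K_{n,p}}) this left-hand side is exactly $\T(\vect z,\vect w)$. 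It then remains to expand the right-hand side of (\ref{kernel}) as $\hbar\searrow 0$.

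For that, I would set $\vartheta=n+p-1$ and $\omega=2\sqrt{\vect z\cdot\vect w}/\hbar$ with $\sqrt{\,\cdot\,}$ the principal branch, and check that the regime of validity of (\ref{asymptotic expression modified Bessel function}) is met: since $\vect z\cdot\vect w\ne 0$ one has $|\omega|\to\infty$ as $\hbar\searrow 0$, and since $\hbar>0$ and $|\mathrm{Arg}(\vect z\cdot\vect w)|<\pi$ one has $|\mathrm{Arg}(\omega)|=\tfrac12|\mathrm{Arg}(\vect z\cdot\vect w)|<\pi/2$. (This is precisely why the hypothesis is stated with the strict bound $\pi$.) Keeping only the $k=0$ and $k=1$ terms of (\ref{asymptotic expression modified Bessel function}) — which is an asymptotic, not convergent, series, so the remainder after the $k=1$ term is $\mathrm O(\omega^{-2})$ for $\vect z,\vect w$ fixed — and using $\Gamma(\vartheta+\tfrac32)/\Gamma(\vartheta-\tfrac12)=(\vartheta+\tfrac12)(\vartheta-\tfrac12)=(n+p-\tfrac12)(n+p-\tfrac32)$, one gets
\begin{equation*}
\mathrm I_{n+p-1}(\omega)=\frac{\mathrm e^{\omega}}{\sqrt{2\pi\omega}}\left[1-\frac{(n+p-\tfrac32)(n+p-\tfrac12)}{2\omega}+\mathrm O(\omega^{-2})\right].
\end{equation*}

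Finally I would substitute $\omega=2\sqrt{\vect z\cdot\vect w}/\hbar$ back in and collect the powers. The exponential becomes $\mathrm e^{2\sqrt{\vect z\cdot\vect w}/\hbar}$, the factor $(2\pi\omega)^{-1/2}$ becomes $(2\sqrt\pi)^{-1}(\vect z\cdot\vect w)^{-1/4}\hbar^{1/2}$, and $(2\omega)^{-1}$ becomes $\hbar/(4\sqrt{\vect z\cdot\vect w})$; multiplying by the prefactor $\Gamma(n+p)(\vect z\cdot\vect w/\hbar^2)^{\frac12(-p-n+1)}$ of (\ref{kernel}) and noting that the exponent of $\vect z\cdot\vect w$ adds up to $\tfrac12(-p-n+1)-\tfrac14=\tfrac12(-p-n+\tfrac12)$ with the matching power $\hbar^{p+n-\frac12}$, one arrives at (\ref{asymptotic inner product coherent states}). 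For (\ref{norm coherent states}) one specializes $\vect w=\vect z$ with $\vect z\ne\vect 0$: then $\vect z\cdot\vect z=|\vect z|^2>0$, so $\mathrm{Arg}(\vect z\cdot\vect z)=0$ and the hypotheses hold, while $\sqrt{\vect z\cdot\vect z}=|\vect z|$ and $(\vect z\cdot\vect z/\hbar^2)^{s}=(|\vect z|/\hbar)^{2s}$ turn (\ref{asymptotic inner product coherent states}) into (\ref{norm coherent states}). I do not expect any serious obstacle: the only step requiring genuine care is the branch-of-square-root and argument bookkeeping in the second paragraph, and the rest is elementary arithmetic with Pochhammer symbols and with powers of $\hbar$ and $\vect z\cdot\vect w$.
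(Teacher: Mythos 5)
Your proposal is correct and follows exactly the route the paper indicates (the paper states the proposition without a written proof, citing precisely the identification $\langle\K{p}(\cdot,\vect w),\K{p}(\cdot,\vect z)\rangle_{\Sn}=\U(\K{p}(\cdot,\vect w))(\vect z)=\T(\vect z,\vect w)$ together with the Bessel asymptotics (\ref{asymptotic expression modified Bessel function})). Your bookkeeping of the branch of the square root, the argument bound $|\mathrm{Arg}(\omega)|<\pi/2$, the Pochhammer quotient $(n+p-\tfrac12)(n+p-\tfrac32)$, and the powers of $\hbar$ and $\vect z\cdot\vect w$ all check out.
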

\begin{remark} 
	We are mainly interested in using Proposition \ref{proposition inner product coherent states} for the cases $\vecz=\vecw$ (and then $\mathrm{Arg}(\vecz\cdot\vecw) = 0$) in this paper. The case when $| \mathrm{Arg}(\vecz\cdot\vecw)| = \pi$ requires the use of an asymptotic expression valid in a different region than the one we are considering in Proposition \ref{proposition inner product coherent states}. Thus if we take the branch of the square root function given by $\sqrt z = |z|^{1/2} \mathrm{exp}(\imath\mathrm{Arg}(z)/2)$ with $0 < \mathrm{Arg}(z) < 2\pi$ then by using formula 8.451-5 in Ref. \cite{G94} we obtain the asymptotic expression,
	\begin{align}
	\left\langle \K{p}(\cdot,\vect w),\K{p}(\cdot,\vect z)\right\rangle _{_{\Sn}}= & \frac{\Gamma(n+p)}{2\sqrt \pi} \left(\frac{\vect z \cdot \vect w}{\hbar^2}\right)^{\frac{1}{2}(-p-n+\frac{1}{2})} \nonumber\\
	& \left[\mathrm{e}^{\frac{2}{\hbar}\sqrt{\vect z \cdot \vect w}}  + \mathrm{e}^{-\frac{2}{\hbar}\sqrt{\vect z \cdot \vect w}}\mathrm{e}^{\pi\imath (n+p-\frac{1}{2})}\right] [1 + \mathrm O(\hbar)]\;.\label{other expression inner product}
	\end{align}
	Note that both asymptotic expressions in Eqs. (\ref{asymptotic inner product coherent states}) and (\ref{other expression inner product}) coincide up to an error of the order $\mathrm O(\hbar^\infty)$,  where $\mathrm O(\hbar^\infty)$ denotes a quantity tending to zero faster than any power of $\hbar$ , in the common region where they are valid.
\end{remark}

\subsection{Asymptotic of the functions in $\mathcal K_{-1}$} \label{asymptotic K{-1}}

In order to give a rigorous proof of Theorems \ref{theorem asymptotic Berezin transform p=1} and \ref{egorov theorem} we need to estimate the derivative of any order of the series defining $\K{p}(\vect x , \vect z)$. %In the particular case when $n\ge 2$ and $p=-1$, we can obtain the asymptotic expansion of the derivative of any order of the series $\K{-1}(\vect x ,\vect z)$. 
The definition of our functions $\K{p}(\vect x,\vect z)$ in terms of an infinite series (not in a closed form like an exponential function) might	look like it could be difficult to deal with them. However, from the work of Thomas and Wassell (see appendix B of Ref. \cite{T-W95}) we can obtain an asymptotic expansion of $\K{-1}(\vect x,\vect z)$, i.e. $n\ge 2$ and $p=-1$.

First, based on the definition of $\K{-1}$  (see Eq. (\ref{coherent states})) let us define, for $a \in \mathbb R-\{0\}$, the function $g_a : \mathbb C\to \mathbb C$ by
\begin{equation}\label{def g cap3}
g_a(z)= \sum_{\ell=0}^\infty \frac{\sqrt{a\ell+1}}{\ell !} \;z^\ell.%\hspace{1cm} a\in \mathbb R-\{0\}.%=\frac{1}{n-1}\,.
\end{equation}

%the asymptotic behaviour 

Note that for $\vect z \in \mathbb C^n$, $\K{-1}(\vect x ,\vect z)=g_{\frac{1}{n-1}}\left(\frac{\vect x \cdot \vect z}{\hbar}\right)$, $\vect x \in \Sn$. % is equal to the function $g_{\frac{1}{(n-1)}}$ evaluated at $(\vect x \cdot \vect z)/\hbar$. 
In this subsection we obtain the main asymptotic term for the $s$th derivative of the function $g_a$ as a function of $z$ for $\Re(z) \to +\infty$ and $|\Im(z)|\le C \Re(z)$ with $C$ a positive constant, which in turn will allow us to obtain asymptotics of the $s$th derivative of the function $\K{-1}(\cdot,\vect z)$ for $\hbar$ small and $\vect x$ in either of the following regions on $\Sn$
\begin{equation}\label{regions W_z, V_z}
W_{\vect z}=\left\{\vect x \in \Sn\;|\; C\frac{\Re(\vect x\cdot\vect z)}{|\vect z|}\ge 1\right\}\;\;\mbox{and}\;\; V_{\vect z}=\Sn-W_{\vect z}\;.
\end{equation}

The proof of lemma below follows the work of Thomas and Wassel (see appendix B of Ref. \cite{T-W95} and lemma 10.1 of Ref. \cite{D-V09} for more details).
\begin{lemma}\label{asymptotic p=1}  
	Let $z \in \mathbb C$,  $s$ be any non-negative integer number and $a \in \mathbb R$ with $0 < a \le 2$. Suppose $\Re(z)>0$ and $|\Im(z)| \le C \Re(z)$ with $C$ a positive constant and $\Re(z)\to +\infty$. Then the $s\mathrm {th}$ derivative of  $g_a$ has the following asymptotic expansion:
	\begin{equation} \label{asymptotic expansion g_a}
	\frac{\mathrm d^sg_a(z)}{\mathrm dz^s}=\sqrt a z^{1/2}\exp(z)\left[1+\frac{a_{1,s}}{z}+\frac{a_{2,s}}{z^2}+...+\frac{a_{N,s}}{z^N} + \mathrm O(z^{-(N+1)})\right]
	\end{equation}
	with $a_{1,s},a_{2,s},...,a_{N,s}$ some constants.
\end{lemma}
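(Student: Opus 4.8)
The plan is to analyze $g_a(z)=\sum_{\ell\ge 0}\frac{\sqrt{a\ell+1}}{\ell!}z^\ell$ by recognizing $\sqrt{a\ell+1}$ as a value of an analytic function that admits a convenient integral representation, following Thomas--Wassell. First I would write $\sqrt{a\ell+1}=\sqrt{a}\,\sqrt{\ell+1/a}$ and use an integral representation for the square root of a shifted integer — for instance the Mellin--Barnes / Gamma-function identity $\sqrt{\ell+1/a}=\frac{1}{\Gamma(-1/2)}\int_0^\infty t^{-3/2}\bigl(1-e^{-(\ell+1/a)t}\bigr)\,dt$ (or an equivalent representation with a contour in the Borel plane), so that summing over $\ell$ against $z^\ell/\ell!$ turns $g_a(z)$ into $\sqrt{a}$ times $e^z$ multiplied by an integral of the form $\int_0^\infty t^{-3/2}\bigl(e^{z}-e^{ze^{-t}}e^{-t/a}\bigr)\cdot(\text{stuff})\,dt$, i.e. we extract the dominant $e^z$ and are left with a correction integral. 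The key point is that after this manipulation $g_a(z)=\sqrt a\,z^{1/2}e^z\cdot h_a(z)$ where $h_a$ is given by an explicit integral to which Watson's lemma applies.

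Next I would differentiate. Since differentiation commutes with the integral/series (justified by the convergence estimates established earlier in the excerpt — the bounds on derivatives of the defining series), and since the only $z$-dependence sits in $e^z$ and in the smooth correction factor, the $s$th derivative is again of the form $\sqrt a\,z^{1/2}e^z\cdot h_{a,s}(z)$ where $h_{a,s}$ differs from $h_a$ by lower-order-in-$z$ terms coming from the Leibniz rule applied to $z^{1/2}e^z\,h_a(z)$: each derivative falling on $z^{1/2}$ or on $h_a$ produces a factor that is $O(1/z)$ relative to the term where all derivatives fall on $e^z$. Collecting these, one gets $\frac{d^s g_a}{dz^s}=\sqrt a\,z^{1/2}e^z\bigl[1+O(1/z)\bigr]$, and iterating the expansion of $h_a$ (Watson's lemma gives $h_a(z)\sim 1+\sum_{k\ge 1}c_k z^{-k}$) to order $N$ yields the stated form with constants $a_{1,s},\dots,a_{N,s}$. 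The hypotheses $0<a\le 2$ and $\Re(z)\to+\infty$ with $|\Im(z)|\le C\Re(z)$ are exactly what is needed to keep $z$ in the sector $|\arg z|\le\arctan C<\pi/2$ where Watson's lemma is uniformly valid, and to ensure the correction integral converges (the constraint on $a$ controls the exponential $e^{-t/a}$ versus the behaviour of $e^{ze^{-t}}$ near $t=0$).

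The main obstacle I anticipate is making the interchange of summation and integration (and of differentiation and integration) fully rigorous, and more importantly controlling the error term \emph{uniformly} in the sector rather than just along the real axis: one must check that the remainder in Watson's lemma is $O(z^{-(N+1)})$ with a constant depending only on $C$ and $a$, not on $\arg z$, and that the non-dominant saddle/endpoint contributions (the piece one might worry about when $e^{ze^{-t}}$ is integrated against $t^{-3/2}$) are genuinely exponentially small, i.e. $O(z^{-\infty})$, throughout the sector. This is where the precise form of the hypotheses is used, and where I would lean on the detailed arguments in appendix B of Ref. \cite{T-W95} and lemma 10.1 of Ref. \cite{D-V09}, adapting them to our function $g_a$; the algebraic bookkeeping of the constants $a_{j,s}$ is then routine.
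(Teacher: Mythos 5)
Your overall strategy is the same as the paper's --- both follow Thomas--Wassell: replace the square root in the coefficients by an integral representation, interchange sum and integral to produce $e^z$ times a Laplace-type integral, and read off the expansion from the behaviour of that integral near its endpoint --- but the representation you choose is genuinely different. The paper writes $\sqrt{a\ell+1}=(a\ell+1)\cdot\frac{1}{\sqrt{a\ell+1}}$ and uses $\frac{1}{\sqrt{a\ell+1}}=\frac{1}{\sqrt{\pi}}\int_{-\infty}^{\infty}e^{-(a\ell+1)t^2}\,\mathrm dt$; after the substitution $e^{-at^2}=1-w^2$ this yields the completely regular Laplace integral $g_a(z)=\frac{2}{\sqrt{a\pi}}\,e^z\int_0^1[az(1-w^2)+1]e^{-zw^2}\mathsf m(w)\,\mathrm dw$, whose endpoint expansion in even powers of $w$ gives the $z^{-j}$ corrections directly. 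Your subtracted Gamma-function representation of $\sqrt{\ell+1/a}$ (note the sign: the correct identity is $\lambda^{1/2}=\frac{1}{\Gamma(-1/2)}\int_0^\infty t^{-3/2}(e^{-\lambda t}-1)\,\mathrm dt$ with $\Gamma(-1/2)=-2\sqrt\pi$) leads instead to a finite-part integral with kernel $t^{-3/2}$; this does produce the same expansion --- the $\delta^{-1/2}$ boundary constants cancel between the near and far regions --- but at the cost of a more delicate Watson-type argument than the paper's. One side effect: in your formulation the hypothesis $0<a\le 2$ plays no visible role (the large-$t$ factor $e^{-t/a}$ is harmless for every $a>0$, and near $t=0$ there is no competition between $e^{-t/a}$ and $e^{ze^{-t}}$ of the kind you describe), whereas in the paper it is used precisely to keep $(1-w^2)^{1/a-1/2}$ bounded near $w=1$ when estimating the tail of the integral over $[1/2,1]$.

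The one step you should not leave at the level of a gesture is the passage to the $s$th derivative. From $g_a(z)=\sqrt a\,z^{1/2}e^z h_a(z)$ with $h_a(z)\sim 1+\sum_k c_kz^{-k}$ one may \emph{not} conclude that $h_a^{(j)}(z)=\mathrm O(z^{-1-j})$: asymptotic expansions cannot be differentiated term by term without further information, and your Leibniz bookkeeping silently assumes exactly this. The fix is the one the paper carries out explicitly: differentiate under the integral sign in the representation of $h_a$ (each $\partial_z$ brings down a factor $-w^2$, resp.\ $e^{-t}-1$, which raises the order of vanishing of the amplitude at the endpoint) and re-run the endpoint expansion for each order of derivative; this is why the paper computes $\mathrm G_a^{(k)}$ and $\mathrm H_a^{(k)}$ separately instead of differentiating the expansion of $\mathrm G_a$. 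With that step made explicit, and with the uniformity in the sector $|\Im(z)|\le C\Re(z)$ tracked through the dominated-convergence bounds as you anticipate, your argument goes through.
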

\begin{proof}
	In the proof of this Lemma we denote by $\mathfrak K^{(\ell)}(z)$, $\ell=0,1,\ldots$, the $\ell$st derivative of a given function $\mathfrak K:\mathbb C \to \mathbb C$ evaluated in $z \in \mathbb C$, i.e. $\mathfrak K^{(\ell)}(z)=\frac{\mathrm d^\ell \mathfrak K(z)}{\mathrm d z^\ell}$.
	
	Using the fact $\frac{1}{\sqrt{a\ell+1}}=\frac{1}{\sqrt \pi}\int_{-\infty}^\infty e^{-(a\ell+1)t^2}\mathrm dt$  and the Taylor series for the exponential function we obtain
	
	\begin{align}
	g_a(z) & = \frac{1}{\sqrt \pi} \sum_{\ell=0}^\infty \frac{a\ell+1}{\ell!} z^\ell \int_{-\infty}^\infty e^{-(a\ell+1)t^2}\mathrm dt \nonumber\\
	%& = \frac{1}{\sqrt \pi} \left(\int\limits_{-\infty}^\infty \sum_{\ell=1}^\infty \frac{\frac{2}{n-1} z^\ell}{(\ell-1)!}%z^\ell e^{-\frac{2\ell+n-1}{n-1}t^2}\mathrm dt + \int\limits_{-\infty}^\infty\sum_{\ell=0}^\infty \frac{z^\ell}{\ell!} e^{-\frac{2}{n-1}\ell t^2}e^{-t^2}\mathrm dt\right)\nonumber\\ 
	%& = \frac{1}{\sqrt \pi} \int\limits_{-\infty}^\infty  \left[\frac{2e^{-t^2}}{n-1} z e^{-\frac{2t^2}{n-1}}\exp\left(z e^{-\frac{2t^2}{n-1}}\right) +\exp\left(z e^{-\frac{2t^2}{n-1}}\right) e^{-t^2}\right]\mathrm dt\nonumber\\
	& = \frac{2}{\sqrt\pi} \int_0^\infty \left[az e^{-at^2}+1\right]\exp(z e^{-at^2})e^{-t^2} \mathrm dt.\label{integral expression g_a}
	\end{align}
	
	Let us consider the change of variables $e^{-at^2}=1-w^2$ with $w \in[0,1]$. From Eq. (\ref{integral expression g_a})
	\begin{equation}
	g_a(z) = \frac{2}{\sqrt{ a\pi}} e^z\int_0^1 \left[az(1-w^2)+1\right]e^{-zw^2}\mathsf m(w)\mathrm d w \label{g}
	\end{equation}
	where
	\begin{displaymath}
	\mathsf m(w):= \frac{w(1-w^2)^{\frac{1}{a}-1}}{\sqrt{-\ln(1-w^2)}}.
	\end{displaymath}
	
	Let us write the integral in Eq. (\ref{g}) as an integral over the region $0\le t < 1/2$ plus an integral over the region $1/2\le t \le 1$ and let us call them $\mathrm J_a(\vect z)$ and $\mathrm I_a(\vect z)$ respectively. %For $k$ a non-negative integer number, let $\mathrm{I}_a^{(k)}(z) =\frac{\mathrm d^k \mathrm I_a(z)}{\mathrm dz^k}$, and $\mathrm{J}_a^{(k)}(z) = \frac{\mathrm d^k \mathrm J_a(z)}{\mathrm dz^k}$.
	
	%\begin{equation*}
	%\mathrm{I}_a(z)= \int_{1/2}^1\left[az(1-w^2)+1\right] e^{-zw^2}\mathsf m(w)\mathrm dw \;,\quad \mathrm{I}_a^{(k)}(z)=\frac{\mathrm d^k \mathrm I_a(z)}{\mathrm dz^k}\;.
	%\end{equation*}
	
	Now we claim that for any $k \in \mathbb Z_+$, $\mathrm I_a^{(k)}(z)$ is $\mathrm O(z^{-\infty})$, where $\mathrm O(\hbar^\infty)$ denotes a quantity tending to zero faster than any power of $\hbar$ . In order to estimate $\mathrm I_a^{(k)} (\vect z)$, first write
	\begin{align*}
	\mathsf m(w)= \frac{w(1-w^2)^{\frac{1}{a}-\frac{1}{2}}}{ \left[-(1-w^2)\ln(1-w^2)\right]^{1/2}}\;.
	\end{align*}
	
	Since $0 < a \le 2$, then the numerator $(1-w^2)^{\frac{1}{a}-\frac{1}{2}}$ in the last expression is a bounded function in the interval $[1/2,1]$. Notice also that the function  $-1/\ln(1-w^2)$ is bounded from above by $-1/\ln(3/4)$ in the same interval. Since the integral $\int_{1/2}^1 1/\sqrt{1-w^2}\mathrm d w$ is finite and the absolute value of the derivative (respect to $z$) of any order of the integrand in Eq. (\ref{g}) is bounded by the function $(1-w^2)^{1/2}$, then by the dominated convergence theorem we have that for any natural number $r$
	\begin{align*}
	\left |z^r \mathrm I_a^{(k)}(z)%\frac{\mathrm d^{k}}{\mathrm dz^{k}} \mathrm I_a(z)
	\right| & \le M e^{-\Re(z)/4}|z|^{r+1}, %\;\;\;\forall\;r\in \mathbb N\;.
	\end{align*}
	with $M$  a constant number independent of $z$. Since $|\Im (z)| \le C \Re(z)$, then $|z|\le C_1\Re(z)$ for some constant $C_1$. Therefore $\left |z^r\mathrm  I_a^{(k)}(z)\right|  \le M |\Re(z)|^{r+1}e^{-\Re(z)/4}$ which is a bounded function when $\Re(z)\to \infty$. Thus we have $\mathrm I_a^{(k)}(z)= \mathrm O(z^{-r})$ for all $r \in \mathbb N$, i.e.
	\begin{equation}\label{eq1 apendice c}
	\mathrm I_a^{(k)}(z)= \mathrm O(z^{-\infty})\;, \;\;\;\mbox{for}\;\;\; \Re(z)\to +\infty. 
	\end{equation}
	
	Let us now study the term $\mathrm J_a(z)$ and its $k$st derivative. First note that the function $\mathsf m$ has an even and $C^\infty$ extension to the interval $(-1/2,1/2)$ which implies that $\mathsf m$ has an asymptotic expansion in the interval $[0,1/2)$ in terms of even powers of the variable $w$ (see Ref. \cite{D-V09}). Namely, there exist numbers $b_{2j}$, $j=0,1,\ldots$ such that for $w \in [0,1/2)$
	\begin{displaymath}
	\mathsf m(w)- \sum_{j=0}^N b_{2j}w^{2j} = \mathrm O(w^{2N+2}) \; , \hspace{.5cm}b_0=\mathsf m(0)=1.
	\end{displaymath}
	
	Let us write  $\mathrm J_a^{(k)}(z) = \mathrm G_a^{(k)}(z) + \mathrm H_a^{(k)}(z)$
	%\begin{equation}\label{eq2 apendice c}
	%\mathrm J_a^{(k)}(z) = \mathrm G_a^{(k)}(z) + \mathrm H_a^{(k)}(z),
	%\end{equation}
	with
	\begin{equation}\label{eq6 ec}
	\mathrm G_a(z)  = \int_0^{1/2}e^{-zw^2} \mathsf m(w) \mathrm dw ,\quad\mathrm H_a(z)  = \int_0^{1/2}a z(1-w^2)e^{-zw^2} \mathsf m(w) \mathrm dw.
	\end{equation}
	
	Let us study $\mathrm G_a^{(k)}$. Let $N$ be a natural number and define
	\begin{align}
	\mathrm G_{a,1}(z)  & = \int_0^{1/2}e^{-zw^2} \sum_{j=0}^N b_{2j}w^{2j}\mathrm dw,\label{definition G_{a,1}}\\
	\mathrm G_{a,2} (z) & = \int_0^{1/2}e^{-zw^2} \left(\mathsf m(w)-\sum_{j=0}^N b_{2j}w^{2j}\right)\mathrm dw.\label{definition G_{a,2}}
	\end{align}
	Notice that $\mathrm G_a^{(k)}(z)= \mathrm G_{a,1}^{(k)}(z)+\mathrm G_{a,2}^{(k)}(z)$. We claim that $\mathrm G_{a,2}^{(k)}(z)= \mathrm O(z^{-N-k-\frac{3}{2}})$. Since for any $\ell\in \mathbb Z_+$ the absolute value of the $\ell$st derivative of the integrand in Eq. (\ref{definition G_{a,2}}) is bounded by the function $w^{2N+2+2\ell}$, which in turn is integrable, then by the dominated convergence theorem we have 
	\begin{equation*}
	\left|\mathrm G_{a,2}^{(k)}(z)\right|  \le M \int_0^{1/2}e^{-\Re(z)w^2}w^{2N+2+2k}\mathrm dw \le \frac{M}{|z|^{(2N+2k+3)/2}},
	\end{equation*}
	with $M$ a constant only dependents on $N$ and where to obtain the last inequality we have considered the change of variables $\eta=\sqrt{\Re(z)}w$ and that $|\Im(z)|\le C \Re(z)$. 
	
%	On the other hand, the same arguments to the one above we may  interchange the order of integration and differentiation in the expression of $\mathrm G_{a,1}^{(k)}(z)$ and obtain	
	On the other hand, let us write $\mathrm G_{a,1}^{(k)}(z)$ as follows
	\begin{align}
	\mathrm G_{a,1}^{(k)} & = \sum_{j=0}^N b_{2j} \int_0^\infty e^{-zw^2}w^{2j}(-w^2)^k\mathrm dw- \sum_{j=0}^N b_{2j} \int_\frac{1}{2}^\infty
	e^{-zw^2}w^{2j}(-w^2)^k\mathrm dw\label{eq5 apendice c}.
	\end{align}
	
	The second term of $\mathrm G_{a,1}^{(k)}(z)$ is $\mathrm O(z^{-\infty})$ because for any natural number $r$ we have
	\begin{align}
	\left|z^r \sum_{j=0}^N b_{2j} \right.&  \left.\int_{\frac{1}{2}}^\infty e^{-zw^2}w^{2(k+j)} \mathrm dw\right| \nonumber\\
	&\le M |\Re(z)|^r \sum_{j=0}^N \frac{|b_{2j}|e^{-\Re(z)/4}}{|\Re(z)|^{k+j+1/2}} \int_0^\infty 
	e^{-t^2} (t+\sqrt{\Re(z)}/2)^{2(k+j)} \mathrm dt\label{help 1 G_{a,1} second term}\\
	& \le C_1 |\Re(z)|^{r}e^{-\Re(z)/4}\label{help 2 G_{a,1} second term}
	\end{align}
	with $M$, $C_1$ constants independent of $z$ and $C_1$ only depends on $N$, $r$ and $k$. To obtain the inequality (\ref{help 1 G_{a,1} second term}) we have considered the changes of variables $v=w-1/2$ and $t=\sqrt{\mathrm{Re}(z)}v$. The inequality (\ref{help 2 G_{a,1} second term}) is a consequence that the integrals $\int_0^\infty e^{-t^2} 
	(t+\sqrt{\mathrm{Re}(z)} /2)^{2(k+j)} \mathrm dt$ are polynomials in the variable $\sqrt{\Re(z)}$.

	On the other hand, since $\int_0^\infty e^{-zw^2}w^{2(k+j)}\mathrm d w=\frac{1}{z^{k+j}\sqrt z} \int_0^\infty e^{-t^2}t^{2(k+j)} \mathrm dt$, then from Eq. (\ref{eq5 apendice c})
	\begin{align}\label{eq6 apendice c}
	\mathrm G_{a,1}^{(k)}& = \sum_{j=0}^N (-1)^k  b_{2j}  \frac{d_{j+k}}{z^{k+j}\sqrt z} + \mathrm O(z^{-\infty})
	\end{align}
	with $\displaystyle d_{r}=\int_0^\infty e^{-t^2}t^{2r}\mathrm dt$. In particular $\displaystyle d_{0} =\frac{\sqrt \pi}{2}$.
	
	We conclude that
	\begin{align}
	\mathrm G_a^{(k)}(z)  = %\mathrm G_{a,1}^{(k)}(z)+ \mathrm G_{a,2}^{(k)}(z) = 
	(-1)^k \frac{\sqrt{z}} {z^{k+1}}\left [\sum_{j=0}^N\frac{d_{j+k}}{z^j} b_{2j}+\mathrm O(z^{-N-1})\right].\label{eq3 anr}
	\end{align}
	%for some constants $g_{r+1,k}=\frac{d_{r,k}}{d_{0,k}}=\frac{2k!2^{2k}}{\sqrt \pi(2k)!}b_{2r}\int_0^\infty e^{-t^2}t^{2(k+r)}\mathrm dt$

	Let us now study $\mathrm H_a^{(k)}$, defined in Eq. (\ref{eq6 ec}). From the equality $\mathrm H_a(z)=az[\mathrm G_a(z)+\mathrm G_a^{(1)}(z)]$ and Eq. (\ref{eq3 anr})
	\begin{align}
	\mathrm H^{(k)}&  = a \left[z\mathrm G^{(k)}(z)+z\mathrm G^{(k+1)}(z)+k\mathrm G^{(k-1)}(z)+k\mathrm G^{(k)}(z)\right]\nonumber\\
	& =  a(-1)^{k}\frac{\sqrt z}{z^{k}}\biggl[b_0(d_k-kd_{k-1})\nonumber\\
	&\hspace{.3cm}+\sum_{j=1}^N\frac{1}{z^j}(b_{2j}-b_{2(j-1)})(d_{j+k}-kd_{j+k-1})+\mathrm O(z^{-N-1})\biggl]\label{eq7 ec}
	\end{align}
	where we are taking the convention that $ k d_ {k-1} = 0 $ when $ k = 0 $.
	
	Thus, from  Eqs. (\ref{g}), (\ref{eq1 apendice c}), (\ref{eq3 anr}) and (\ref{eq7 ec}) we obtain 
	\begin{align}
	g_a^{(s)}(z) & =  2 e^{z}\left(\frac{z}{a\pi}\right)^{\frac{1}{2}}\sum_{k=0}^s\binom{s}{k} \frac{(-1)^k}{z^k}\biggl[ a b_0(d_k-kd_{k-1})\nonumber\\
	& \hspace{0.5cm} + \sum_{j=1}^N\frac{1}{z^j}\biggl(a(b_{2j}-b_{2(j-1)})(d_{j+k}-kd_{j+k-1})\nonumber\\
	&\hspace{0.5cm} + b_{2(j-1)}d_{k+j-1}\biggl) + \mathrm O(z^{-N-1})\biggl].\label{previous g_a^s}
	\end{align}
	The asymptotic expansion of $g_a^{(s)}$ given in Eq. (\ref{asymptotic expansion g_a}) follows from Eq. (\ref{previous g_a^s}).
	%\begin{align*}
	%\frac{\mathrm d^s}{\mathrm dz^s} g(z) & = \frac{2}{\sqrt{2a\pi}} \sum_{k=0}^s\binom{s}{k} e^z (\mathrm J^{(k)}(z)+\mathrm I^{(k)}(z))\\
	%& = \sqrt a z^{1/2} e^z\left[1+\frac{a_{1,s}}{z}+ \frac{a_{2,s}}{z^2}+ \ldots + \frac{a_{N,s}}{z^N} + \mathrm O(z^{-(N+1)})\right]
	%\end{align*}
	%with $a_{1,s}=\frac{2}{\sqrt \pi} [\binom{s}{1}(d_0-d_1) + \binom{s}{0}((b_2-1)d_1+d_0/a)]$
	%for some constants  $a_{1,s}, a_{2,s},...,a_{N,s}$.
\end{proof}

Using Lemma \ref{asymptotic p=1} with $z = \vecx\cdot\vecz/\hbar$ we obtain the following asymptotic expansion of the $s$th derivative of $g_a$ evaluated at $z=\vect x \cdot \vect z/\hbar$ for $\hbar \to 0$:

\begin{prop}\label{proposition g_a^{s} evaluated}
	Let $\vecz \in \mathbb C^n-\{\vect 0\}$, $C$ be a constant greater than one, $W_\vect z$, $V_\vect z$ be the regions defined in Eq. (\ref{regions W_z, V_z})  and $s$ a non-negative integer number. Then for $\hbar \searrow 0$  we have
	\begin{align}
	g_a^{(s)}(\vect x \cdot\vect z/\hbar)& =\sqrt a \left[\frac{\vect x \cdot \vect z}{\hbar}\right]^{\frac{1}{2}} e^{\frac{\vect x \cdot \vect z}{\hbar}}[1 + \frac{a_{1,s}}{\vect x \cdot \vect z}\hbar + \mathrm O(\hbar^2)],&\mbox{for}\;\; \vect x \in W_\vect z.\label{g_a^{s} evaluated in W_z}\\[0.2cm]
	|g_a^{(s)}(\vect x \cdot\vect z/\hbar)| & \le \frac{C_1}{\hbar} e^{|\vect z|/\hbar} e^{\mu|\vect z|/\hbar} \left(\frac{|\vect z|}{\hbar}+1\right)
	%	e^{|\Re(\vect z)|/\hbar} \mathrm O(\hbar^\infty)
	,&\mbox{for}\;\; \vect x \in V_\vect z,\label{g_a^{s} evaluated in V_z}
	\end{align}
	with $C_1$ a constant and $\mu=\frac{1}{C}-1<0$. % with the factor $\mathrm O(\hbar ^\infty)$ uniform in $\vect x \in V_\vect z$. 
	In particular, for $\vect x \in W_\vect z$
	\begin{equation}\label{asymptotic function K{-1}}
	\K{-1}(\vecx,\vecz)=\left[\frac{\vecx\cdot\vecz}{\hbar(n-1)}\right]^{\frac{1}{2}} \mathrm{exp}\left(\frac{\vecx\cdot\vecz}{\hbar}\right)\left[ 1+\frac{a_1}{\vecx\cdot\vecz} \hbar+\mathrm O(\hbar^2)\right]\;.
	\end{equation}
	with $a_1=a_{1,0}=\frac{1}{2}(n-\frac{5}{4})$.
\end{prop}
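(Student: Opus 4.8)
The plan is to treat the two regions in (\ref{regions W_z, V_z}) separately. On $W_\vect z$ the estimate (\ref{g_a^{s} evaluated in W_z}) will follow by specializing Lemma \ref{asymptotic p=1} at $z=\vecx\cdot\vecz/\hbar$, so I first check that this $z$ satisfies the hypotheses of that lemma with the same constant $C$. Since $\vecx\in\Sn$, so $|\vecx|=1$, Cauchy--Schwarz gives $|\vecx\cdot\vecz|\le|\vecz|$, hence $|\Im(\vecx\cdot\vecz)|\le|\vecz|$; and the defining inequality of $W_\vect z$ reads $\Re(\vecx\cdot\vecz)\ge|\vecz|/C$, so $|\Im(\vecx\cdot\vecz)|\le C\,\Re(\vecx\cdot\vecz)$ and $\Re(z)=\Re(\vecx\cdot\vecz)/\hbar\ge|\vecz|/(C\hbar)\to+\infty$ as $\hbar\searrow0$. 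Lemma \ref{asymptotic p=1} then gives (take $N=1$) $g_a^{(s)}(z)=\sqrt a\,z^{1/2}e^{z}[1+a_{1,s}/z+\mathrm O(z^{-2})]$, and substituting $1/z=\hbar/(\vecx\cdot\vecz)$ produces (\ref{g_a^{s} evaluated in W_z}); the remainder is uniformly $\mathrm O(\hbar^2)$ over $\vecx\in W_\vect z$ because there $|\vecx\cdot\vecz|\ge\Re(\vecx\cdot\vecz)\ge|\vecz|/C$, so $|\vecx\cdot\vecz|^{-2}$ stays bounded.

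On $V_\vect z$ the cone condition of Lemma \ref{asymptotic p=1} may fail, since $\Re(\vecx\cdot\vecz)$ need not dominate $|\Im(\vecx\cdot\vecz)|$ and may even be non-positive; here I would estimate $g_a^{(s)}$ from the integral formula (\ref{integral expression g_a}) instead. Differentiating $s$ times under the integral sign --- legitimate because the integrand and each of its $z$-derivatives is dominated, locally uniformly in $z$, by an integrable function of $t$ --- and applying the Leibniz rule to $(az e^{-at^2}+1)\exp(z e^{-at^2})$ gives
\begin{equation*}
g_a^{(s)}(z)=\frac{2}{\sqrt\pi}\int_0^\infty\bigl(az e^{-at^2}+1+sa\bigr)\,e^{-ast^2}\exp\!\bigl(z e^{-at^2}\bigr)\,e^{-t^2}\,\mathrm dt .
\end{equation*}
Now $|\exp(z e^{-at^2})|=\exp(\Re(z)\,e^{-at^2})\le\exp(\max\{\Re(z),0\})$ because $0\le e^{-at^2}\le1$, and on $V_\vect z$ the inequality $\Re(\vecx\cdot\vecz)<|\vecz|/C$ forces $\max\{\Re(z),0\}<|\vecz|/(C\hbar)$. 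Combining this with $|az e^{-at^2}+1+sa|\le a|z|+1+sa\le a|\vecz|/\hbar+1+sa$ and $\int_0^\infty e^{-(as+1)t^2}\,\mathrm dt\le\sqrt\pi/2$ yields $|g_a^{(s)}(\vecx\cdot\vecz/\hbar)|\le(a|\vecz|/\hbar+1+sa)\,e^{|\vecz|/(C\hbar)}$, and writing $e^{|\vecz|/(C\hbar)}=e^{|\vecz|/\hbar}e^{\mu|\vecz|/\hbar}$ with $\mu=1/C-1<0$ and absorbing constants gives (\ref{g_a^{s} evaluated in V_z}).

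For the final assertion, recall that $\K{-1}(\vecx,\vecz)=g_{1/(n-1)}(\vecx\cdot\vecz/\hbar)$, so (\ref{asymptotic function K{-1}}) is just the $s=0$, $a=1/(n-1)$ case of (\ref{g_a^{s} evaluated in W_z}); it remains only to compute $a_1=a_{1,0}$. Reading off the coefficient of $z^{-1}$ in (\ref{previous g_a^s}) with $s=0$ --- where only the $k=0$ term survives --- and dividing by the leading coefficient $\sqrt a$ gives $a_{1,0}=\frac{(b_2-b_0)d_1}{b_0 d_0}+\frac1a$, with $b_0=\mathsf m(0)=1$, $d_0=\sqrt\pi/2$, $d_1=\sqrt\pi/4$, and $b_2$ the coefficient of $w^2$ in the Taylor expansion of $\mathsf m$ at $0$. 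From $-\ln(1-w^2)=w^2(1+\tfrac12 w^2+\cdots)$ and $(1-w^2)^{1/a-1}=1-(1/a-1)w^2+\cdots$ one gets $b_2=\tfrac34-\tfrac1a$, hence $a_{1,0}=\tfrac1{2a}-\tfrac18$, which for $a=1/(n-1)$ is $\tfrac{n-1}{2}-\tfrac18=\tfrac12(n-\tfrac54)$.

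The one delicate point is the exponential rate in the $V_\vect z$ bound: bounding $g_a^{(s)}$ directly from its power series only yields a factor $e^{|\vecz|/\hbar}$, leaving no room for the decisive gain $e^{\mu|\vecz|/\hbar}$, $\mu<0$. Passing to the integral representation is what makes the difference, because there the growth of $\exp(z e^{-at^2})$ is governed by $\max\{\Re(z),0\}$, and $\max\{\Re(z),0\}<|\vecz|/(C\hbar)$ is precisely what the definition of $V_\vect z$ delivers; everything else is bookkeeping.
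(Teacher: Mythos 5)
Your proof is correct and follows essentially the same route as the paper: Lemma \ref{asymptotic p=1} applied at $z=\vecx\cdot\vecz/\hbar$ on $W_\vect z$ (with the same uniformity argument via $1/|\vecx\cdot\vecz|\le C/|\vecz|$), and an integral representation of $g_a$ to extract the decisive factor $e^{\mu|\vecz|/\hbar}$ on $V_\vect z$ --- the paper works from the $w$-integral (\ref{g}) where you work from the $t$-integral (\ref{integral expression g_a}), but the mechanism is identical and your closed formula for the $s$th derivative is clean and verifiable by induction. Your explicit check that $a_{1,0}=\tfrac{1}{2a}-\tfrac18$, hence $\tfrac12(n-\tfrac54)$ for $a=1/(n-1)$, is a useful addition, since the paper states this value without deriving it.
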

\begin{proof}
	
	First we note that for $\vect x \in W_\vect z$,  $|\Im(\vecx\cdot\vecz)| \le |\vecz|\le C\Re(\vecx\cdot\vecz)$, therefore we can use the asymptotic expression of the function $g_a^{(s)}$ given in Lemma \ref{asymptotic p=1}. Thus, from Eq. (\ref{asymptotic expansion g_a})
	\begin{equation}
	g_a^{(s)}(\vect x \cdot\vect z/\hbar)=\sqrt a \left[\frac{\vect x \cdot \vect z}{\hbar}\right]^{\frac{1}{2}} \mathrm{exp}\left(\frac{\vect x \cdot \vect z}{\hbar}\right)[1 + \frac{a_{1,s}}{\vect x \cdot \vect z}\hbar + \mathrm E_s(\vect x ,\hbar)],
	\end{equation}	
	where the error term $\mathrm E_s(\vect x ,\hbar)$ is $\mathrm O(\hbar^2)$ uniformly with respect to $\vect x \in W_\vect z$ because in such a region we have $1/|\vect x \cdot \vect z|\le C/|\vect z|$.
	
	To show Eq. (\ref{g_a^{s} evaluated in V_z}), let us consider the integral expansion (\ref{g}) to find that the $s$th derivative of $g$ evaluated at $z=\vect x \cdot \vect z/\hbar$ can be written as
	\begin{align}
	g_a^{(s)}\left(\frac{\vect x \cdot \vect z}{\hbar}\right) & =\int_0^1 \left[P_{s,1}(w) \frac{\vect x \cdot \vect z}{\hbar}+ P_{s,2}(w)\right] e^{\frac{\vect x \cdot \vect z}{\hbar}(1-w^2)}\mathsf m(w)\mathrm dw\nonumber\\
	\intertext{with $P_{s,1}$ and $P_{s,2}$ polynomial functions on the variable $w$. Thus for $\vect x \in V_\vect z$}
	\left|g_a^{(s)}\left(\frac{\vect x \cdot \vect z}{\hbar}\right)\right| & \le e^{|\vect z|/\hbar}\int_0^1 \left|P_{s,1}(w) \frac{\vect x \cdot \vect z}{\hbar}+ P_{s,2}(w)\right|
	\left|\mathsf m(w)\right|\nonumber\\
	&\hspace{1.5cm}\exp\left(\frac{|\vect z|}{\hbar} \left[\frac{\Re(\vect x\cdot\vect z)}{|\vect z|}
	(1-w^2)-1\right]\right)\mathrm dw\nonumber\\
	& \le \frac{C_1}{\hbar} e^{|\vect z|/\hbar} e^{\mu|\vect z|/\hbar}\left(\frac{|\vect z|}{\hbar}+1\right)\nonumber%\label{eq15 te}
	\end{align}
	with $C_1$ a constant and $\mu=\frac{1}{C}-1<0$. %To obtain the  inequality (\ref{eq15 te}) we have used that $\vect x \in V_\vect z$.	
\end{proof}

\section{Asymptotic expansion of the Berezin transform.}\label{section Asymptotic expansion of the Berezin transform}
 
In this section we show an analogue of the asymptotic expansion (\ref{asymptotic covariant symbol}) for the associated Berezin transform to the family $\mathcal K_p$ (see Eq. (\ref{eq. Berezin transform})).

It is possible to obtain all the asymptotic expansion of the Berezin transform $\Bemap{p}(\phi)$, $\phi\in C^\infty(\Sn)$, $p>-n$, when $\phi$ is a polynomial. For the general case, when $\phi$ is a smooth function, we consider only the case $n\ge 2$ and $p=0$ or $p=-1$.

%\textcolor{red}{Pag 12 de Berezin-Toeplitz quantization and Berezin Transform. ApPLICATIONS OF THE ASYMPTOTICS OF THE BEREZIN TRANSFORM}

\begin{theorem}\label{asymptotic berezin transform polynomial}
Let  $\bk\in \mathbb Z_+^n$ be a multi-index and $\varphi_\vect k(\vect x)= \vect x^\vect k$, with $\vect x \in \Sn$. Then for $\vect w, \vect z \in \mathbb C^n-\{\vect 0\}$
\begin{align*}
\B(\To_{\varphi_\bk})(\vecw,\vecz) & = \frac{\left(\displaystyle\frac{\vecw\cdot\vecz}{\hbar^2}\right)^{\frac{1}{2}(n+p-1)}}{\mathrm I_{n+p-1}\left(2\sqrt{\vecw\cdot\vecz}/\hbar\right)}\left(\frac{\vecw}{\hbar}\right)^\bk  \sum_{\ell=0}^\infty \left(\frac{\vecw\cdot\vecz}{\hbar^2}\right)^\ell \frac{c_{\ell,p}}{c_{|\bk|+\ell,p}}\\
&\hspace{0.5cm} \frac{1}{\ell!\Gamma(|\bk|+\ell+n+p)}%\left[\frac{\Gamma(n+\ell)}{\Gamma(n+p+\ell)}\frac{1}{\Gamma(n+|\bk|+\ell)\Gamma(n+p+|\bk|+\ell)}\right]^{\frac{1}{2}}\;,
\end{align*}
where $c_{\ell,p}$ was define in Eq. (\ref{coherent states}).
\end{theorem}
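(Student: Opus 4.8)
The plan is to compute the numerator and denominator of the extended covariant symbol $\B(\To_{\varphi_\bk})(\vecw,\vecz)$ directly from the definition \eqref{extended covariant symbol}, using the orthogonal decomposition of $\mathcal O$ into spaces of homogeneous holomorphic polynomials. First I would recall that the monomials $\vect x^{\bm}$, $\bm \in \mathbb Z_+^n$, restricted to $\Sn$ span $\mathcal O$, and that the inner product $\langle \vect x^{\bm}, \vect x^{\bm'}\rangle_{\Sn}$ vanishes unless $\bm = \bm'$, in which case it equals a known constant $C_{\bm}$ (expressible via a Beta-type integral on the sphere, essentially $\frac{\bm!\,(n-1)!}{(|\bm|+n-1)!}$). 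This is the key structural fact: everything reduces to picking out diagonal terms.

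Next I would expand $\K{p}(\cdot,\vecz)$ via its defining series \eqref{coherent states}, writing $\left(\frac{\vecx\cdot\vecz}{\hbar}\right)^\ell = \ell!\sum_{|\bm|=\ell}\frac{\bar{\vect z}^{\bm}}{\bm!\,\hbar^{|\bm|}}\vect x^{\bm}$ by the multinomial theorem, so that $\K{p}(\cdot,\vecz) = \sum_{\bm} c_{|\bm|,p}\frac{\bar{\vect z}^{\bm}}{\bm!\,\hbar^{|\bm|}}\vect x^{\bm}$. The denominator $\langle \K{p}(\cdot,\vecz),\K{p}(\cdot,\vecw)\rangle_{\Sn}$ is then $\sum_{\bm} c_{|\bm|,p}^2 \frac{\bar{\vect z}^{\bm}\vect w^{\bm}}{(\bm!)^2\hbar^{2|\bm|}}C_{\bm} = \T(\vecw,\vecz)$, which by \eqref{kernel} equals $\Gamma(n+p)\left(\frac{\vecw\cdot\vecz}{\hbar^2}\right)^{\frac12(-p-n+1)}\mathrm I_{n+p-1}(2\sqrt{\vecw\cdot\vecz}/\hbar)$; this gives the prefactor $\left(\frac{\vecw\cdot\vecz}{\hbar^2}\right)^{\frac12(n+p-1)}/\mathrm I_{n+p-1}(2\sqrt{\vecw\cdot\vecz}/\hbar)$ after dividing. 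For the numerator, since $\To_{\varphi_\bk}\psi = \Pro(\vect x^\bk \psi)$ and $\Pro$ is self-adjoint with range $\mathcal O$ containing $\K{p}(\cdot,\vecw)$, I have $\langle \To_{\varphi_\bk}\K{p}(\cdot,\vecz),\K{p}(\cdot,\vecw)\rangle_{\Sn} = \langle \vect x^\bk\,\K{p}(\cdot,\vecz),\K{p}(\cdot,\vecw)\rangle_{\Sn}$. Expanding both $\K{p}$'s in monomials, this becomes $\sum_{\bm,\bm'} c_{|\bm|,p}c_{|\bm'|,p}\frac{\bar{\vect z}^{\bm}\vect w^{\bm'}}{\bm!\,\bm'!\,\hbar^{|\bm|+|\bm'|}}\langle \vect x^{\bk+\bm},\vect x^{\bm'}\rangle_{\Sn}$, and orthogonality forces $\bm' = \bk+\bm$, collapsing to a single sum over $\bm$ involving $C_{\bk+\bm}$.

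Then it is a matter of assembling: the numerator sum over $\bm$ regrouped by $\ell = |\bm|$ (again via the multinomial theorem run backwards, using $\sum_{|\bm|=\ell}\frac{\bar{\vect z}^{\bm}\vect w^{\bm}}{\bm!} = \frac{(\vecw\cdot\vecz)^\ell}{\ell!}$ after absorbing the appropriate factor of $\vect w^\bk$) produces $\left(\frac{\vecw}{\hbar}\right)^\bk\sum_{\ell=0}^\infty \left(\frac{\vecw\cdot\vecz}{\hbar^2}\right)^\ell \frac{c_{\ell,p}c_{|\bk|+\ell,p}}{\ell!}\cdot\frac{C_{\bk+\bm}}{\text{normalization}}$; here the combinatorial bookkeeping of the constants $C_{\bk+\bm}$ and the explicit value $(n)_\ell/(n+p)_\ell$ of $c_{\ell,p}^2$ must be matched against $\frac{1}{\Gamma(|\bk|+\ell+n+p)}$, and one checks that the factor $c_{|\bk|+\ell,p}$ in the numerator and the $c_{|\bk|+\ell,p}^2$ hidden in the normalization combine to leave exactly $c_{\ell,p}/c_{|\bk|+\ell,p}$. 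Finally I divide by $\T(\vecw,\vecz)$ to obtain the stated formula. The main obstacle is the careful tracking of the three families of constants — the sphere-integral constants $C_{\bm}$, the normalization factors $c_{\ell,p}$, and the Gamma factors coming from the Bessel-function series of $\T$ — and verifying that they telescope into the clean ratio $c_{\ell,p}/(c_{|\bk|+\ell,p}\,\ell!\,\Gamma(|\bk|+\ell+n+p))$; everything else is a direct expansion using orthogonality of monomials on $\Sn$.
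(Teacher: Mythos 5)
Your proposal is correct and follows essentially the same route as the paper: reduce $\To_{\varphi_\bk}$ to multiplication by $\vect x^\bk$ via the self-adjointness of $\Pro$, expand both coherent states in monomials, and use the orthogonality relations $\langle \vect x^{\bm},\vect x^{\bm'}\rangle_{\Sn}=\delta_{\bm\bm'}\,\bm!/(n)_{|\bm|}$ (this is the content of Lemma A1 of \cite{D-18} that the paper invokes) together with the series for $\T$ in Eq.~(\ref{kernel}); your bookkeeping of the constants does telescope to $c_{\ell,p}/\bigl(c_{|\bk|+\ell,p}\,\ell!\,\Gamma(|\bk|+\ell+n+p)\bigr)$ as claimed. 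The only detail the paper makes explicit that you leave implicit is the appeal to dominated convergence to justify integrating the series term by term.
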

\begin{proof} From Eq. (\ref{extended covariant symbol}) and properties of the orthogonal projection
\begin{align*}
\B(\To_{\varphi_\bk})(\vecw,\vecz) = \frac{\langle \vecx^{\bk}\K{p}(\cdot,\vecz),\K{p}(\cdot,\vecw)\rangle_{{\Sn}}}{\langle\K{p}(\cdot,\vecz),\K{p}(\cdot,\vecw)\rangle_{{\Sn}}}\;.
\end{align*}
From the dominated convergence theorem, Lemma A1 in Ref. \cite{D-18} and  Eqs. (\ref{definition operator U_{n,p}}), (\ref{U_{n,p} of function K_{n,p}}) and 	(\ref{kernel}) we conclude the proof of Theorem \ref{asymptotic berezin transform polynomial}.
\end{proof}

%The Berezin transform for functions $\varphi_\vect k$, $\vect k \in \mathbb Z_+^n$ and $p=0$ may be expressed as an asymptotic series of $\hbar$, for $\hbar \to 0$
\begin{corollary}
	Let $\vect z \in \mathbb C^n-\{\vect 0\}$, $\vect k \in \mathbb Z_+^n$ be a multi-index and $\varphi_\vect k$ as above. Then for $\hbar \searrow 0$
	\begin{equation}\label{asymptotic berezin transform polynomial p=0}
	\Bemap{0}(\varphi_\vect k)(\vect z)= \left(\frac{\vect z}{|\vect z|} \right)^{\vect k} \left[1-\frac{|\vect k|(|\vect k|+2n-2)}{4|\vect z|}\hbar + \mathrm O(\hbar^2)\right].
	\end{equation}
\end{corollary}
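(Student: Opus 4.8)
The plan is to specialize Theorem~\ref{asymptotic berezin transform polynomial} to the case $p=0$, $\vect w = \vect z$, and then extract the small-$\hbar$ behaviour of the resulting series against the small-$\hbar$ behaviour of the Bessel function in the denominator. First I would set $p=0$ in the theorem. Then $c_{\ell,0} = 1$ for all $\ell$ (since $(n)_\ell/(n)_\ell = 1$), so the ratio $c_{\ell,0}/c_{|\vect k|+\ell,0}$ is identically $1$, and the series collapses to
\begin{equation*}
\sum_{\ell=0}^\infty \left(\frac{|\vect z|^2}{\hbar^2}\right)^\ell \frac{1}{\ell!\,\Gamma(|\vect k|+\ell+n)}.
\end{equation*}
This is, up to a power of the argument, a modified Bessel function: recalling $\mathrm I_\nu(x) = \sum_{\ell \ge 0} (x/2)^{\nu+2\ell}/(\ell!\,\Gamma(\nu+\ell+1))$, one recognizes the series as $\left(|\vect z|/\hbar\right)^{-(|\vect k|+n-1)} \mathrm I_{|\vect k|+n-1}(2|\vect z|/\hbar)$. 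So $\Bemap{0}(\varphi_{\vect k})(\vect z)$ becomes, after plugging $\vect w = \vect z$ and simplifying the prefactors, a clean ratio of two modified Bessel functions,
\begin{equation*}
\Bemap{0}(\varphi_{\vect k})(\vect z) = \left(\frac{\vect z}{|\vect z|}\right)^{\vect k} \frac{\mathrm I_{|\vect k|+n-1}(2|\vect z|/\hbar)}{\mathrm I_{n-1}(2|\vect z|/\hbar)}.
\end{equation*}

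Next I would apply the asymptotic expansion~(\ref{asymptotic expression modified Bessel function}) for $\mathrm I_\vartheta(\omega)$ as $|\omega| \to \infty$ with $\omega = 2|\vect z|/\hbar \to +\infty$, to both numerator (with $\vartheta = |\vect k|+n-1$) and denominator (with $\vartheta = n-1$). The leading $\mathrm e^\omega/\sqrt{2\pi\omega}$ factors cancel in the ratio, and one is left with a quotient of two asymptotic series in $1/(2\omega) = \hbar/(4|\vect z|)$ whose leading terms are both $1$. Dividing the numerator series by the denominator series to first order in $\hbar$, the $\mathrm O(\hbar)$ coefficient is the difference of the respective $k=1$ coefficients in~(\ref{asymptotic expression modified Bessel function}), namely $-\frac{1}{2\omega}\left[\frac{\Gamma(\vartheta_{\mathrm{num}}+3/2)}{\Gamma(\vartheta_{\mathrm{num}}-1/2)} - \frac{\Gamma(\vartheta_{\mathrm{den}}+3/2)}{\Gamma(\vartheta_{\mathrm{den}}-1/2)}\right]$. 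Since $\Gamma(\vartheta+3/2)/\Gamma(\vartheta-1/2) = (\vartheta+1/2)(\vartheta-1/2) = \vartheta^2 - 1/4$, this difference is $(|\vect k|+n-1)^2 - (n-1)^2 = |\vect k|(|\vect k|+2n-2)$. Multiplying by $-1/(2\omega) = -\hbar/(4|\vect z|)$ gives exactly the stated coefficient $-|\vect k|(|\vect k|+2n-2)/(4|\vect z|)$, and the remainder is $\mathrm O(\hbar^2)$ since all the series involved are genuine asymptotic expansions in $\hbar$.

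The only real subtlety is justifying the manipulation of the series: Theorem~\ref{asymptotic berezin transform polynomial} gives an exact identity (a convergent series, or at least a well-defined function after resummation), and I must make sure that recognizing it as a Bessel function and then substituting the asymptotic expansion~(\ref{asymptotic expression modified Bessel function}) is legitimate — in particular that the error terms in the numerator and denominator expansions combine to give a genuine $\mathrm O(\hbar^2)$ after the division, rather than something borderline. This is routine because~(\ref{asymptotic expression modified Bessel function}) is an asymptotic expansion with controlled remainder and the denominator stays bounded away from zero (its leading term is $1$), so the quotient of two such expansions is again an asymptotic expansion with $\mathrm O(\hbar^2)$ remainder by the standard division-of-asymptotic-series argument. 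A secondary bookkeeping point is keeping track of the powers of $|\vect z|/\hbar$ floating around in the prefactors of Theorem~\ref{asymptotic berezin transform polynomial} and checking they cancel against the Bessel normalization; this is a short direct computation. Thus the main obstacle is essentially notational rather than conceptual: matching the hypergeometric-type series in Theorem~\ref{asymptotic berezin transform polynomial} to the Bessel series and then carefully propagating the error estimates through the ratio.
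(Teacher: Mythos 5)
Your proposal is correct and follows exactly the paper's own route: specialize Theorem \ref{asymptotic berezin transform polynomial} to $p=0$ (so $c_{\ell,0}=1$), recognize the series as $(|\vect z|/\hbar)^{-(|\vect k|+n-1)}\mathrm I_{|\vect k|+n-1}(2|\vect z|/\hbar)$ to get the Bessel ratio $\bigl(\vect z/|\vect z|\bigr)^{\vect k}\,\mathrm I_{n+|\vect k|-1}/\mathrm I_{n-1}$, and then apply the expansion (\ref{asymptotic expression modified Bessel function}). Your explicit computation of the first-order coefficient via $\Gamma(\vartheta+3/2)/\Gamma(\vartheta-1/2)=\vartheta^2-1/4$ is exactly the calculation the paper leaves implicit, and it checks out.
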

\begin{proof}
	From Theorem \ref{asymptotic berezin transform polynomial} and the expression of the modified Bessel function $\mathrm I_\nu$, $\nu \in \mathbb R$,  as a power series (see formula 8.445 of Ref. \cite{G94}) we have
	\begin{align*}
	\Bemap{0}(\varphi_{\vect k})(\vect z) & = \left(\frac{\vect z}{|\vect z|}\right)^\bk \frac{\mathrm I_{n+|\bk|-1}\left(2|\vect z|/\hbar\right)}{\mathrm I_{n-1}\left(2|\vect z|/\hbar\right)}.
	\end{align*}
	
	Using the asymptotic expression of the modified Bessel function $\mathrm I_\nu(\omega)$ when $|\omega| \to \infty$ (see Eq. (\ref{asymptotic expression modified Bessel function})%formula 8.451-5 of Ref. \cite{G94}
	) we obtain the asymptotic expression given in Eq. (\ref{asymptotic berezin transform polynomial p=0}).
\end{proof}

%\textcolor{red}{Ver ecuación (4.3) del articulo BEREZIN-TOEPLITZ QUANTIZATION AND BEREZIN SYMBOLS FOR ARBITRARY COMPACT Kahler  MANIFOLDS}

\begin{theorem}\label{theorem asymptotic Berezin transform}
Let $\vect z \in \mathbb C^n-\{\mathbf 0\}$ and $\phi$ be a smooth function on $\Sn$. Then for $\hbar \searrow 0$
%Let $p=0$, for any $\vecz \ne 0$ and $\Phi$ a smooth function on $\Sn$, the Berezin symbol $\mathfrak B_{\hbar,0}$ associated to the Toeplitz operator $\To_\Phi$ has the asymptotic expansion
\begin{align}
\Bemap{0}(\phi)(\vect z)& =\phi(\vecz/|\vecz|)+
\frac{\hbar}{4r}\Big[4 \Delta_{\vect x \overline{\vect x}}-\mathcal R^2-(2n-2)\mathcal R\nonumber\\ 
& \hspace{1cm}-\frac{1}{4} (2n-1)(2n-3)\Big] \phi(\vect z/|\vect z|)
+\mathrm O(\hbar^2),\label{asymptotic Berezin transform}
\end{align}
where $\Delta_{\vect x \overline{\vect x}}=\sum_{j=1}^n \partial_{x_j \overline x_j}$ and $\mathcal R:=\sum_{j=1}^n \left( z_j \partial_{z_j}+ \overline z_j \partial_{\overline z_j}\right)$ denote the Laplace operator  and the radial derivative on $\mathbb R^{2n}\cong\mathbb C^n$, respectively.
\end{theorem}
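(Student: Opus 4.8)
The plan is to use that for $p=0$ the weight $|\K{0}(\vecx,\vecz)|^2$ is available in closed form, so that $\Bemap{0}(\phi)$ becomes a genuine Laplace integral on $\Sn$ amenable to the stationary phase method. First I would put $\Bemap{0}(\phi)$ into integral form. Since $(c_{\ell,0})^2=(n)_\ell/(n)_\ell=1$, (\ref{coherent states}) gives $\K{0}(\vecx,\vecz)=\sum_{\ell\ge0}\frac{1}{\ell!}(\vecx\cdot\vecz/\hbar)^\ell=\mathrm e^{\vecx\cdot\vecz/\hbar}$, hence $|\K{0}(\vecx,\vecz)|^2=\mathrm e^{2\Re(\vecx\cdot\vecz)/\hbar}$. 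Using $\To_\phi=\Pro(\phi\,\cdot\,)$, the self-adjointness of $\Pro$, the fact that $\K{0}(\cdot,\vecz)\in\mathcal O$, and $\|\K{0}(\cdot,\vecz)\|_{\Sn}^2=\T(\vecz,\vecz)$, the definition of the covariant symbol (\ref{extended covariant symbol}) reduces to
\begin{equation*}
\Bemap{0}(\phi)(\vecz)=\frac{\int_\Sn\phi(\vecx)\,\mathrm e^{2\Re(\vecx\cdot\vecz)/\hbar}\,\mathrm d\Sn(\vecx)}{\int_\Sn\mathrm e^{2\Re(\vecx\cdot\vecz)/\hbar}\,\mathrm d\Sn(\vecx)}.
\end{equation*}
By Proposition \ref{extended covariant symbol T_U}(2) the map $\Bemap{0}$ commutes with $\mathrm T_U$, $U\in\mathrm{SU}(n)$, and the operators $\Delta_{\vect x\overline{\vect x}}$, $\mathcal R$, hence the whole right-hand side of (\ref{asymptotic Berezin transform}), are $\mathrm{SU}(n)$-invariant; since $\mathrm{SU}(n)$ acts transitively on the unit sphere of $\mathbb C^n$ for $n\ge 2$, it suffices to prove the formula at $\vecz=r\vect{e}_1$ with $r=|\vecz|>0$ and $\vect{e}_1=(1,0,\dots,0)$, where $\Re(\vecx\cdot\vecz)=r\,\Re(x_1)$.

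Then I would carry out the Laplace expansion. Identifying $\Sn$ with $S^{2n-1}\subset\mathbb R^{2n}$ and $\Re(x_1)$ with the first real coordinate, the weight $\mathrm e^{2r\Re(x_1)/\hbar}$ attains its maximum on $\Sn$ only at $\vect{e}_1$. On the region $V_{r\vect{e}_1}$ of (\ref{regions W_z, V_z}) one has $\Re(x_1)<1/C<1$, so the contribution of $V_{r\vect{e}_1}$ to both integrals is $\mathrm O(\hbar^\infty)\mathrm e^{2r/\hbar}$ and is discarded (in the $p=-1$ analogue this step is exactly where (\ref{g_a^{s} evaluated in V_z}) enters). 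On $W_{r\vect{e}_1}$, a neighbourhood of $\vect{e}_1$, I would use coordinates $\vecx'=(x_2,\dots,x_n)\in\mathbb C^{n-1}$, $\theta\in\mathbb R$ with $x_1=\sqrt{1-|\vecx'|^2}\,\mathrm e^{\imath\theta}$, so that $1-\Re(x_1)=1-\sqrt{1-|\vecx'|^2}\cos\theta$ has a nondegenerate minimum $0$ at the origin, equal to $\tfrac12(|\vecx'|^2+\theta^2)$ plus an explicit quartic remainder, and expand the surface density $\mathrm d\Sn=J(\vecx',\theta)\,\mathrm d\vecx'\,\mathrm d\overline{\vecx}'\,\mathrm d\theta$ to second order about the origin. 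Writing $\mathrm e^{2r\Re(x_1)/\hbar}=\mathrm e^{2r/\hbar}\mathrm e^{-\lambda(1-\Re(x_1))}$ with $\lambda=2r/\hbar\to\infty$ and applying the multidimensional Laplace (stationary phase with real phase) expansion to numerator ($g=\phi$) and denominator ($g\equiv1$), the common factor $(2\pi/\lambda)^{(2n-1)/2}\mathrm e^{2r/\hbar}$ cancels in the quotient and I am left with
\begin{equation*}
\Bemap{0}(\phi)(r\vect{e}_1)=\phi(\vect{e}_1)+\frac1\lambda\left(\frac{\mathcal L[\phi J](0)}{J(0)}-\phi(\vect{e}_1)\frac{\mathcal L[J](0)}{J(0)}\right)+\mathrm O(\lambda^{-2}),
\end{equation*}
where $\mathcal L$ denotes the first Laplace correction functional (combining $\tfrac12$ of the inverse-Hessian-contracted second derivatives of $\phi J$ at the origin with the cubic and quartic corrections of $1-\Re(x_1)$).

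Finally I would identify the coefficient. The bracket above is a second-order differential operator $\mathcal D:\phi\mapsto\mathcal D\phi(\vect{e}_1)$ whose zeroth- and first-order parts cancel in the difference; being $\mathrm O(2n)$-invariant and annihilating constants, $\mathcal D$ is a constant multiple of the Laplace–Beltrami operator $\Delta_\Sn$. I would fix the constant either by pushing through the quartic/Jacobian bookkeeping, or — more economically — by testing against $\phi=x_1^m|_\Sn$ (a degree-$m$ spherical harmonic), for which Theorem \ref{asymptotic berezin transform polynomial} and its corollary already give the exact Berezin transform, so that $\mathcal D(x_1^m)(\vect{e}_1)=-\tfrac12 m(m+2n-2)=\tfrac12\Delta_\Sn(x_1^m)(\vect{e}_1)$; hence $\mathcal D=\tfrac12\Delta_\Sn$. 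With $1/\lambda=\hbar/(2r)$ this gives the first-order term $\frac{\hbar}{4r}\Delta_\Sn\phi(\vect{e}_1)$, which is the content of (\ref{asymptotic Berezin transform}) once $\Delta_\Sn$ is rewritten in the ambient coordinates of $\mathbb C^n=\mathbb R^{2n}$ by the identity $\Delta_\Sn=\bigl(4\Delta_{\vect x\overline{\vect x}}-\mathcal R^2-(2n-2)\mathcal R\bigr)\big|_{|\vecx|=1}$, valid on $\phi$ extended homogeneously of degree zero (so that the $\mathcal R$-terms annihilate $\phi(\vecz/|\vecz|)$); undoing the reduction by $\mathrm{SU}(n)$-equivariance delivers the statement for general $\vecz$.

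The hard part will be this last identification: keeping the quartic phase corrections and the Jacobian expansion straight and recognising the resulting functional as $\tfrac12\Delta_\Sn$ in the form displayed in (\ref{asymptotic Berezin transform}). A convenient independent cross-check is that, since $\mathrm e^{2\Re(\vecx\cdot\vecz)/\hbar}$ is a zonal kernel on $\Sn=S^{2n-1}$, the Funk–Hecke theorem gives the exact identity $\Bemap{0}(Y_l)(\vecz)=Y_l(\vecz/|\vecz|)\,\mathrm I_{n+l-1}(2|\vecz|/\hbar)/\mathrm I_{n-1}(2|\vecz|/\hbar)$ for every spherical harmonic $Y_l$ on $\Sn$, whose $\hbar\searrow0$ expansion via (\ref{asymptotic expression modified Bessel function}), summed over $l$ using the rapid decay of the $Y_l$ for smooth $\phi$, reproduces (\ref{asymptotic Berezin transform}).
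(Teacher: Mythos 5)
Your strategy is essentially the paper's: reduce to $\vecz=r\hat{\vect e}_1$ via the $\mathrm{SU}(n)$-equivariance of Proposition \ref{extended covariant symbol T_U}, then extract the $\hbar$-expansion of a Laplace-type integral concentrated at $\hat{\vect e}_1$. You diverge in two worthwhile ways. First, you keep the denominator as $\int_{\Sn}e^{2\Re(\vecx\cdot\vecz)/\hbar}\,\ds$ and expand numerator and denominator by the same Laplace expansion, whereas the paper substitutes the Bessel asymptotics (\ref{norm coherent states}) for $\|\K{0}(\cdot,\vecz)\|_{\Sn}^2$ and then performs the full stationary-phase bookkeeping (spherical coordinates, the operators $\mathbf M_\ell$ of Eq.~(\ref{Mj})) for the numerator alone. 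Second, you identify the order-$\hbar$ coefficient abstractly: it is a second-order operator, $\mathrm O(2n)$-invariant because $e^{2\Re(\vecx\cdot\vecz)/\hbar}$ is a zonal kernel on $S^{2n-1}$, and it annihilates constants, hence is a multiple of $\Delta_{S^{2n-1}}$; the multiple is then fixed by testing on $x_1^m$ against Eq.~(\ref{asymptotic berezin transform polynomial p=0}) (no circularity, since that rests on the exact Bessel formula). This sidesteps the quartic-phase and Jacobian computations that constitute the hard part of the paper's proof, at the cost of needing the eigenfunction input. Your reduction of the covariant symbol to the quotient of integrals (via self-adjointness of $\Pro$ and $\Pro\K{0}(\cdot,\vecz)=\K{0}(\cdot,\vecz)$) and the conversion $\Delta_{S^{2n-1}}=4\Delta_{\vect x\overline{\vect x}}-\mathcal R^2-(2n-2)\mathcal R$ on degree-zero homogeneous extensions are both correct.

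The one point you must address is the constant $-\tfrac14(2n-1)(2n-3)$ in (\ref{asymptotic Berezin transform}): your argument yields the expansion \emph{without} it, since your operator $\mathcal D$ annihilates constants by construction. This is not a defect of your proof. The stated constant is incompatible with the exact identity $\Bemap{0}(1)=1$ (take $\phi\equiv1$ in (\ref{asymptotic Berezin transform})), with Eq.~(\ref{asymptotic berezin transform polynomial p=0}), and with your Funk--Hecke cross-check. In the paper's computation that constant enters as the volume-density contribution in $\mathbf M_1$ (Eq.~(\ref{M1 n=3})), and it should be cancelled by the order-$\hbar$ term of the norm estimate (\ref{norm coherent states}) hidden inside the factor $(1+\mathrm O(\hbar))$ of Eq.~(\ref{Berezin approximation 1}); the analogous cancellation is carried out explicitly at the end of the proof of Theorem \ref{theorem asymptotic Berezin transform p=1}, which is why no such constant survives in (\ref{asymptotic Berezin transform p=1}). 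So your proof establishes the corrected form of the statement; you should simply record, e.g.\ by the $\phi\equiv1$ test, why the constant term must be absent. A minor further caveat: the Funk--Hecke route as a full proof would require uniformity in $l$ of the remainder in (\ref{asymptotic expression modified Bessel function}) before summing over spherical harmonics, so it is best kept, as you present it, as a consistency check rather than the main argument.
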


\begin{proof}

Let us first note that it is enough to show that (\ref{asymptotic Berezin transform}) holds for all points $\vect z$ of the form 
$\vect z=r \hat{\vect e}_1$, with $r>0$, and $\hat{\vect e}_1 =(1,0,\ldots,0)\in \mathbb R^{2n}$.

%\begin{equation}\label{particular z}
%\vect z=r \hat{\vect e}_1,\quad \mbox{with}\; r>0, \; \mbox{and}\;\hat{\vect e}_1=(1,0,\ldots,0)\in \mathbb R^{2n}.
%\end{equation}
Indeed, let $U\in \mathrm{SU}(n)$ and $\mathrm T_U$ be the unitary transformation defined in Proposition \ref{extended covariant symbol T_U}. Since the Laplace operator $\Delta_{\vect x \overline{\vect x}}$ as well as the radial derivative $\mathcal R$ are clearly invariant under unitary transformation $\mathrm T_U$, the right hand side of Eq. (\ref{asymptotic Berezin transform})  is likewise invariant under $\mathrm T_U$. 

By (2) of Proposition \ref{extended covariant symbol T_U}, the validity of Eq. (\ref{asymptotic Berezin transform}) for $\phi$ at $\vecz$ is therefore equivalent to its validity for $\mathrm T_{U^{-1}}\phi=\phi \circ U$ at $U^{-1} \vecz$. 

%Let us first note that the Laplace operator $\Delta_{\vect x \overline{\vect x}}$ as well as the radial derivative $\mathcal R$ are invariant under unitary transformations of $\mathbb C^n$. Hence, the right hand side of Eq. ((\ref{asymptotic Berezin transform})  is likewise invariant under $U$
%
%By (2) of Proposition \ref{extended covariant symbol T_U}, the validity of Eq. (\ref{asymptotic Berezin transform}) for $\Phi$ at $\vecz$ is therefore equivalent to its validity for $\mathrm T_{U^{-1}}f=f \circ U$ at $U^{-1} \vecz$. 
Since any given $\vecz$  can be mapped by a suitable $U\in \mathrm{SU}(n)$ into a point of the form $r \hat{\vect e}_1$%(\ref{particular z})
, with $r=|\vecz|$, it is indeed enough to prove (\ref{asymptotic Berezin transform}) only for points $\vecz$ of the latter form.

%By (2) of Proposition \ref{extended covariant symbol T_U}, the validity of Eq. (\ref{asymptotic Berezin transform}) for $\Phi$ at $\vecz$ is therefore equivalent to its validity for $\mathrm T_{U^{-1}}f=f \circ U$ at $U^{-1} \vecz$. Since any given $\vecz$  can be mapped by a suitable $U\in \mathrm{SU}(n)$ into a point of the form $(r,0,\ldots,0)\in \mathbb C^n$, with $r=|\vecz|$, it is indeed enough to prove (\ref{asymptotic Berezin transform}) only for points $\vecz$ of the latter form.

For the rest of the proof of Theorem \ref{theorem asymptotic Berezin transform}, we thus assume that $\vecz=r \hat{\vect e}_1$ with $r>0$.

Since $\K{0}(\vecx,\vecz)=e^{\vecx\cdot\vecz/\hbar}$ (see Eq. (\ref{coherent states}), with $p=0$). From Definition \ref{definition Berezin transform} and the norm estimate of the function $\K{0}(\cdot,\vect z)$ (see Eq. (\ref{norm coherent states})) we obtain
\begin{align}
\Bemap{0}(\phi)(\vect z)& =\mathfrak B_{\hbar,0} \To_\phi (\vecz) \nonumber\\ & =\frac{2\sqrt \pi}{\Gamma(n)} \left(\frac{r}{\hbar}\right)^{n-\frac{1}{2}}\int_{\Sn} \mathrm{exp}\left(\frac{2r}{\hbar} \big[\Re(x_1)-1\big]\right) \phi(\vecx) (1 + \mathrm O(\hbar))\ds(\vect x)\label{Berezin approximation 1}%\\[0.5cm] 
%& \hspace{1.5cm}\left(1+ \frac{(2-\frac{3}{2})(n-\frac{1}{2})}{4|\vect x|}\hbar + \mathrm{O}(\hbar^2)\right) \ds(\vecx)\;.\label{Berezin approximation 1}
%\Phi(\vecx(\vtheta)) \mathrm e^{\frac{1}{\hbar}} 
\end{align}

\begin{comment}
We first note that $\K{0}(\vecx,\vecz)=e^{\vecx\cdot\vecz/\hbar}$ (see Eq. (\ref{coherent states}), with $p=0$). From Eq. (\ref{definition Berezin transform}) and the norm estimate of the function $\K{0}(\cdot,\vect z)$ (see Eq. (\ref{norm coherent states})) we obtain
\begin{align}
\Bemap{0}(\Phi)(\vect z)& =\mathfrak B_{\hbar,0} \To_\Phi (\vecz) \nonumber\\ & =\frac{2\sqrt \pi}{\Gamma(n)} \left(\frac{|\vecz|}{\hbar}\right)^{n-\frac{1}{2}}\int_{\Sn} \mathrm{exp}\left(\frac{2}{\hbar} \big[\Re (\vecx\cdot\vecz)-|\vecz|\big]\right) \Phi(\vecx)\nonumber\\[0.2cm]
&\hspace{3.5cm}\times (1 + \mathrm O(\hbar))\ds(\vect x)\label{Berezin approximation 1}%\\[0.5cm] 
%& \hspace{1.5cm}\left(1+ \frac{(2-\frac{3}{2})(n-\frac{1}{2})}{4|\vect x|}\hbar + \mathrm{O}(\hbar^2)\right) \ds(\vecx)\;.\label{Berezin approximation 1}
%\Phi(\vecx(\vtheta)) \mathrm e^{\frac{1}{\hbar}} 
\end{align}
%where $\Re(\omega)$ denote the real  part of $\omega\in \mathbb C$.
\end{comment}

Writing the coordinates $x_j$ of a point $\vect x \in \Sn$ in the form $x_j = y_j + \imath y_{n+j}$, with $y_j$ and $y_{n+j}$ real numbers. Note that the vector $\vect y = (y_1,\ldots,y_{2n}) \in S^{2n-1}=\{\mathbf a \in \mathbb R^{2n}\;|\; |\mathbf a|=1\}$. %Further, the function appearing in the argument of the exponential function in Eq. (\ref{Berezin approximation 1}) is 
%Let us identify $\mathbb R^{2n}$ with $\mathbb C^n$ by: for $(x_1,\ldots,x_n)\in \Sn$, let $\vecy=(y_1,\ldots,y_{2n})$ with $x_j=y_{j}-\imath y_{n+j}$, $j=1,\ldots n$. Note that $\vecy \in S^{2n-1}=\{\mathbf a \in \mathbb R^{2n}\;|\; |\mathbf a|=1\}$. 
%Then,  the function appearing in the argument of the exponential function in Eq. (\ref{Berezin approximation 1}) is 
%\begin{displaymath}
%\frac{2}{\hbar}\;\big[\vecy \cdot(\Re(\vecz),\Im(\vecz))-|\vecz|\big]\;.
%\end{displaymath}
%where $x_j=y_{j}+\imath y_{n+j}$. Note that $\vecy \in S^{2n-1}=\{\mathbf a \in \mathbb R^{2n}\;|\; |\mathbf a|=1\}$.

In order to estimate (\ref{asymptotic Berezin transform}), we define
\begin{comment}
\begin{align}
\mathcal A &: = \left(\frac{|\vecz|}{\pi\hbar}\right)^{n-\frac{1}{2}} \int_{S^{2n-1}}  \Psi(\vecy)\mathrm{exp}\left(\frac{2}{\hbar} \big[\vecy\cdot(\Re(\vecz),\Im(\vecz))-|\vecz|\big]\right) \mathrm d\Omega_{2n-1}(\vecy)\label{Auxiliar A}
\end{align}
where $\mathrm d\Omega_{2n-1}$ is the surface measure on $S^{2n-1}$ and $\Psi$ is a smooth function on $S^{2n-1}$.

Note that given $(\Re(\vecz),\Im(\vecz))\in \mathbb R^{2n}$, there exist a rotation $R$ in $\mathrm {SO}(2n)$ such that $(\Re(\vecz),\Im(\vecz))= rR \hat{\mathbf e}_1 $ with $r=|\vecz|$ and $\hat{\mathbf e}_1=(1,0,\ldots,0)$ denoting a canonical unit vector in $\mathbb R^{2n}$. Thus we have
\begin{align}
\mathcal A& = \left(\frac{r}{\pi\hbar}\right)^{n-\frac{1}{2}} \int_{S^{2n-1}}  \Psi(R\boldsymbol \omega) \mathrm{exp}\left(\frac{\imath}{\hbar}f(\boldsymbol \omega)\right)  \mathrm d\Omega_{2n-1}(\boldsymbol \omega) \label{Auxiliar A-1}
\end{align}
\end{comment}
\begin{align}
\mathcal A \Psi& = \left(\frac{r}{\pi\hbar}\right)^{n-\frac{1}{2}} \int_{S^{2n-1}}  \Psi(\boldsymbol \omega) \mathrm{exp}\left(\frac{\imath}{\hbar}f(\boldsymbol \omega)\right)  \mathrm d\Omega_{2n-1}(\boldsymbol \omega) \label{Auxiliar A-1}
\end{align}
where $\mathrm d \Omega_{2n-1}$ is the normalized surface measure on $S^{2n-1}$ and $f(\boldsymbol \omega)=-2\imath r\big[\omega_1-1 \big]$.

Let us introduce spherical coordinates for the variables $(\omega_1,\ldots, \omega_{2n})\in S^{2n-1}$:
\begin{align*}
\omega_1 & = \sin(\theta_{2n-1}) \cdots \sin(\theta_2)\cos(\theta_1)\;,\\
\omega_2 & = \sin(\theta_{2n-1}) \cdots \sin(\theta_2)\sin(\theta_1)\;,\\
&\hspace{0.2cm}\vdots\\
\omega_{2n-1}&=\sin(\theta_{2n-1})\cos(\theta_{2n-2})\;,\\
\omega_{2n}&= \cos(\theta_{2n-1})
\end{align*}
with $\theta_1\in(-\pi,\pi)$, $\theta_2,\theta_3,\cdots,\theta_{2n-1}\in(0,\pi)$.  

The function $f$ appearing in the argument of the exponential function in Eq. (\ref{Auxiliar A-1}) has a non-negative imaginary part and has only one critical point  (as a function of the angles) $\boldsymbol \theta_0$ which contributes to the asymptotic given by $\theta_1=0$, $\theta_j=\pi/2$, $j=2,\ldots,2n-1$. In addition, since
\begin{equation*}
\left.\frac{\partial^2 f}{\partial \theta_\ell\partial \theta_j}\right|_{\vtheta=\boldsymbol \theta_0}=2\imath r \delta_{j\ell}\;,
\end{equation*}
with $\delta_{ij}$ denoting the Kronecker symbol, then the Hessian matrix of $f$ evaluated at the critical point $\vtheta_0$ is equal to $f''(\vtheta_0)=2\imath r \mathbf I_{2n-1}$, where $\mathbf I_{s}$ denotes the identity matrix of size $s$. Moreover, $\mathrm{det}(f''(\vtheta_0))=(2\imath r)^{2n-1}$.
%determinate of the Hessian matrix of the function $f_\vecz$ evaluated at the critical point is equal to $(2\imath r)^{2n-1}$.
From the stationary phase method (see Ref. \cite{H90}) we obtain that

\begin{equation}\label{tfe}
\mathcal A \Psi= \frac{1}{N} \left[\sum_{\ell<k} \hbar^\ell \mathbf M_\ell \Psi\Big|_{cp} + \mathrm O(\hbar^k)\right],
\end{equation}
where $N$ is a  normalization constant such that $\int_{\boldsymbol \omega \in S^{2n-1}} \mathrm d \Omega_{2n-1} (\boldsymbol \omega) = 1$ and
\begin{align}
\mathbf M_\ell \Psi\Big|_{cp} & = \sum_{s=\ell}^{3\ell} \frac{\imath^{-\ell} 2^{-s}}{ s!(s-\ell)!} \left[ \left(-(f''(\boldsymbol \theta_0))^{-1}\right) \hat D \cdot \hat D\right]^s \Big [ \Psi(\boldsymbol \omega(\boldsymbol{\theta}))\nonumber\\
& \hspace{1cm}(\sin \theta_{2n-1})^{2n-2}\cdots \sin \theta_2 (\mathfrak p_{cp})^{s-\ell}\Big]\biggl|_{cp}\label{Mj}
\end{align}
with $\mathfrak g\big|_{cp}$ denoting the evaluation  at the critical point $\boldsymbol \theta_0$ of a given function $\mathfrak g$, 
\begin{align}
\mathfrak p_{cp}=\mathfrak p_{cp}(\boldsymbol \theta) & = -2\imath r \left(\sin \theta_{2n-1} \cdots \sin \theta_2 \cos \theta_1 - 1\right)-\imath r \left(\theta_1^2 +\sum_{j=2}^{2n-1} \left(\theta_{j}-\frac{\pi}{2}\right)^2\right),%\nonumber\\ 
%& \hspace{0.5cm} -\imath r \left(\theta_1^2 +(\theta_2-\pi/2)^2 +\cdots + (\theta_{2n-1}-\pi/2)^2\right), 
\label{definition f|cp n=3}
\end{align}
and $\hat D$ the column vector of size $2n-1$ whose $j$ entry is $\partial_{\theta_j}$ (i.e. $(\hat D)_{j}=\partial_{\theta_j}$).

Since $\left(-(f''(\vtheta_0))^{-1}\right) \hat D \cdot \hat D= 
 \frac{\imath}{2r}\sum_{j=1}^{2n-1}\partial_{\theta_j \theta_j}$ and
$\partial_{\vtheta}^{\vect k} \mathfrak p_{cp}(\vtheta_0)=0$ for any multi index $\vect k \in \mathbb Z_+^{2n-1}$ that satisfies $|\vect k|\le 3$ or $|\vect k|$ is odd or $\theta_j$ is odd for some $j=1,\ldots, 2n-1$,
we obtain from Eq. (\ref{Mj}) that
%\begin{align}
%\mathbf M_0\Psi \Big|_{cp} 
%& =\Psi(\boldsymbol \omega(\vtheta_0)), \label{M0 n=3}\\
%\mathbf M_1 \Psi\Big|_{cp}
%& =\frac{1}{4r}\left[\sum_{j=1}^{2n-1}\partial _{\theta_j \theta_j} -\frac{1}{4} (2n-1)(2n-3)\right] \Psi(\boldsymbol \omega (\vtheta_0)).\label{M1 n=3}
%\end{align}
%\begin{align}
%\mathbf M_0\Psi \Big|_{cp} 
%& =\Psi(\hat{\mathbf e}_1), \label{M0 n=3}\\
%\mathbf M_1 \Psi\Big|_{cp}
%& =\frac{1}{4r}\Big[4 \Delta_{\vect x \overline{\vect x}}-(\partial_{x_1}+\partial_{\overline x_1})^2-(2n-1)(\partial_{x_1}+\partial_{\overline x_1})\nonumber\\ 
%& \hspace{1cm}-\frac{1}{4} (2n-1)(2n-3)\Big] \Psi(\hat{\mathbf e}_1),\label{M1 n=3}
%\end{align}

\begin{align}
\mathbf M_0\Psi \Big|_{cp} 
& =\Psi(\hat{\mathbf e}_1), \label{M0 n=3}\\
\mathbf M_1 \Psi\Big|_{cp}
& =\frac{1}{4r}\Big[\Delta_{\vect y {\vect y}}-\partial_{y_1y_1}-(2n-1)\partial_{y_1}-\frac{1}{4} (2n-1)(2n-3)\Big] \Psi(\hat{\mathbf e}_1),\label{M1 n=3}
\end{align}
%with $\hat{\mathbf e}_1$ a canonical vector in $\mathbb R^{2n}$ and 
where we have used the chain rule to obtain Eq. (\ref{M1 n=3}).

From Eqs. (\ref{tfe}), (\ref{M0 n=3}) and (\ref{M1 n=3})
\begin{align}
\mathcal A \Psi& = \frac{1}{N}\biggl[1 + \frac{\hbar}{4r}\Big(\Delta_{\vect y {\vect y}}-\partial_{y_1y_1}-(2n-1)\partial_{y_1}\nonumber\\
& \hspace{2.5cm}-\frac{1}{4} (2n-1)(2n-3)\Big) \Psi(\hat{\mathbf e}_1)\biggl]+  \mathrm O(\hbar^2).\label{auxiliar mathcal A}
\end{align}

Thus, if we consider $\Psi(\Re(\vecx),\Im(\vecx))=\phi(\vecx)$,  $\vecx \in \Sn$, in Eq. (\ref{Auxiliar A-1}), % (\ref{Auxiliar A}), 
then from Eqs. (\ref{Berezin approximation 1}), (\ref{auxiliar mathcal A}), the chain rule and the fact that $N=2\pi^n/ \Gamma(n)$ we conclude the proof of this theorem. 
%
%to obtain Eq. (\ref{asymptotic Berezin transform}), we use Eqs. , (\ref{Auxiliar A}) (with $\Psi(\Re(\vecx),\Im(\vecx))=\Phi(\vecx)$)
%
%
%for $\vecx \in \Sn$, we consider $\Psi(\Re(\vecx),\Im(\vecx))=\Phi(\vecx)$. Then from Eqs. (\ref{Berezin approximation 1}), (\ref{Auxiliar A}) and (\ref{auxiliar mathcal A}) we conclude the proof of this theorem.
\end{proof}

%In order to find an asymptotic expansion of the Berezin transform $\Bemap{p}$, with $p\ne 0$, it is first necessary to obtain an asymptotic expansion of the functions $\K{p}$. However, this is a very laborious job, so we will only focus on the case $p=-1$.

In subsection \ref{asymptotic K{-1}} we obtained  an asymptotic expression of the functions $\K{-1}(\cdot, \vect z)$, $\vect z \in \mathbb C^n$,
%When $p = -1$ we can give an asymptotic expression of the coherent states (see Appendix \ref{AppendixB}). 
this result will allow us to obtain an asymptotic expansion for the Berezin transform $\Bemap{-1}$ similar to that obtained in Theorem \ref{theorem asymptotic Berezin transform}.

\begin{theorem}\label{theorem asymptotic Berezin transform p=1}
Let $n\ge 2$, $\vect z \in \mathbb C^n-\{\mathbf 0\}$ and $\phi$ be a smooth function on $\Sn$. Then for $\hbar \searrow 0$
%Let $p=-1$, for any $\vecz \ne 0$ and $\Phi$ a smooth function on $\Sn$, the Berezin transform $\mathfrak B_{\hbar,-1}$ associated to the Toeplitz operator $\To_\Phi$ has the asymptotic expansion
\begin{align}
\Bemap{-1}(\phi)(\vect z)
=\phi(\vecz/|\vecz|)+ \frac{\hbar}{4r}\Big[4 \Delta_{\vect x \overline{\vect x}}-\mathcal R^2-(2n-2)\mathcal R\Big] \phi(\vect z/|\vect z|)+\mathrm O(\hbar^2),\label{asymptotic Berezin transform p=1}
\end{align}
where $\Delta_{\vect x \overline{\vect x}}=\sum_{j=1}^n \partial_{x_j \overline x_j}$ and $\mathcal R:=\sum_{j=1}^n \left( z_j \partial_{z_j}+ \overline z_j \partial_{\overline z_j}\right)$ denote the Laplace operator  and the radial derivative on $\mathbb R^{2n}\cong\mathbb C^n$, respectively.
\end{theorem}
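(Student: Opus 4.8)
Here is how I would prove Theorem~\ref{theorem asymptotic Berezin transform p=1}.

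The plan is to run the argument of Theorem~\ref{theorem asymptotic Berezin transform}, the new feature being that $\K{-1}(\vecx,\vecz)$ is not a pure exponential, so the asymptotics established in Section~\ref{section semiclassical properties} must be substituted in place of an exact Gaussian. First, exactly as in Theorem~\ref{theorem asymptotic Berezin transform}, the $\mathrm{SU}(n)$-invariance of Proposition~\ref{extended covariant symbol T_U}(2) together with the manifest invariance of $\Delta_{\vect x\overline{\vect x}}$ and $\mathcal R$ reduces the statement to points $\vecz=r\hat{\vect e}_1$, $r>0$ (here $n\ge2$ guarantees that some $U\in\mathrm{SU}(n)$ sends $\vecz/|\vecz|$ to $\hat{\vect e}_1$). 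For such $\vecz$ one has $\vecx\cdot\vecz=rx_1$, and since $\Pro$ is self-adjoint and fixes $\K{-1}(\cdot,\vecz)\in\mathcal O$,
\[
\Bemap{-1}(\phi)(\vecz)=\frac{\displaystyle\int_{\Sn}\phi(\vecx)\,|\K{-1}(\vecx,\vecz)|^{2}\,\ds(\vecx)}{\displaystyle\int_{\Sn}|\K{-1}(\vecx,\vecz)|^{2}\,\ds(\vecx)}\,.
\]

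The second step localises the two integrals. Splitting $\Sn=W_{\vecz}\cup V_{\vecz}$ as in (\ref{regions W_z, V_z}) with a fixed $C>1$: on $V_{\vecz}$ the bound (\ref{g_a^{s} evaluated in V_z}) with $s=0$ gives $|\K{-1}(\vecx,\vecz)|^{2}=\mathrm O\!\big(\hbar^{-2}e^{2r/(C\hbar)}(r/\hbar+1)^{2}\big)$, while (\ref{norm coherent states}) shows the denominator is of exact order $\hbar^{\,n-3/2}e^{2r/\hbar}$; since $e^{2r/(C\hbar)}e^{-2r/\hbar}=\mathrm O(\hbar^{\infty})$, the $V_{\vecz}$ part of both integrals is $\mathrm O(\hbar^{\infty})$ against the leading term and is discarded. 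On $W_{\vecz}$, where $|x_1|\ge1/C$, substituting (\ref{asymptotic function K{-1}}) and writing $y_1=\Re x_1$, $\rho(\vecx):=|x_1|$ gives, with $a_1=\tfrac12(n-\tfrac54)$,
\[
|\K{-1}(\vecx,\vecz)|^{2}=\frac{r\,\rho(\vecx)}{\hbar(n-1)}\,e^{\frac{2r}{\hbar}y_1}\Big[1+\tfrac{2a_1\hbar}{r}\,\Re(1/x_1)+\mathrm O(\hbar^{2})\Big],
\]
with $\rho$ and $\Re(1/x_1)$ smooth and bounded on $W_{\vecz}$.

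After cancelling the common prefactor $\tfrac{r}{\hbar(n-1)}e^{2r/\hbar}$, numerator and denominator are stationary-phase integrals of exactly the type treated in the proof of Theorem~\ref{theorem asymptotic Berezin transform}, namely $\int_{\Sn}\Psi\big(1+\tfrac{2a_1\hbar}{r}\Re(1/x_1)\big)e^{\frac{2r}{\hbar}(y_1-1)}\ds$ with $\Psi=\phi\rho$, respectively $\Psi=\rho$. Applying the expansion of Theorem~\ref{theorem asymptotic Berezin transform} (the operator $\mathcal A$ of (\ref{Auxiliar A-1}) together with (\ref{M0 n=3}) and (\ref{M1 n=3})) to each — the non-smoothness of $\rho$ at $x_1=0$ being irrelevant, since the expansion depends only on the germ of the amplitude at the critical point $\hat{\vect e}_1$ — and forming the quotient, the leading prefactor and the $\phi$-independent factor $1+\tfrac{2a_1\hbar}{r}\Re(1/x_1)$ cancel, leaving
\[
\Bemap{-1}(\phi)(\vecz)=\phi(\hat{\vect e}_1)+\hbar\big[\mathbf M_1(\phi\rho)(\hat{\vect e}_1)-\phi(\hat{\vect e}_1)\,\mathbf M_1(\rho)(\hat{\vect e}_1)\big]+\mathrm O(\hbar^2).
\]
Now $\rho(\hat{\vect e}_1)=1$ and $\rho=|x_1|=\sqrt{y_1^2+y_{n+1}^2}$ satisfies $\partial_{y_j}\rho(\hat{\vect e}_1)=0$ for every $j\ne1$, so expanding $\mathbf M_1(\phi\rho)$ by the Leibniz rule and using the form (\ref{M1 n=3}) of $\mathbf M_1$ makes the Leibniz cross-terms and the numerical constant $-\tfrac14(2n-1)(2n-3)$ both disappear from $\mathbf M_1(\phi\rho)(\hat{\vect e}_1)-\phi(\hat{\vect e}_1)\mathbf M_1(\rho)(\hat{\vect e}_1)$, which collapses to $\tfrac1{4r}(\Delta_{\vect y\vect y}-\partial_{y_1y_1}-(2n-1)\partial_{y_1})\phi(\hat{\vect e}_1)$. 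Hence
\[
\Bemap{-1}(\phi)(\vecz)=\phi(\hat{\vect e}_1)+\frac{\hbar}{4r}\big(\Delta_{\vect y\vect y}-\partial_{y_1y_1}-(2n-1)\partial_{y_1}\big)\phi(\hat{\vect e}_1)+\mathrm O(\hbar^2),
\]
and the chain-rule conversion performed at the end of the proof of Theorem~\ref{theorem asymptotic Berezin transform} — which identifies $\tfrac1{4r}(\Delta_{\vect y\vect y}-\partial_{y_1y_1}-(2n-1)\partial_{y_1})$ at $\hat{\vect e}_1$ with $\tfrac1{4r}(4\Delta_{\vect x\overline{\vect x}}-\mathcal R^2-(2n-2)\mathcal R)$ at $\vecz=r\hat{\vect e}_1$ acting on $\phi(\vecz/|\vecz|)$ — yields (\ref{asymptotic Berezin transform p=1}).

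The step I expect to require the most care is the stationary-phase bookkeeping just described: one must verify that the extra amplitude $|x_1|$ and the non-exponential correction in (\ref{asymptotic function K{-1}}) contribute only $\phi$-independent terms to the $\hbar$-coefficient of each integral, so that they drop out of the quotient — this is precisely the mechanism that removes here the numerical term $-\tfrac14(2n-1)(2n-3)$ present in Theorem~\ref{theorem asymptotic Berezin transform}, and it hinges on retaining the full $\mathrm O(\hbar)$ term of $\|\K{-1}(\cdot,\vecz)\|^2_{\Sn}$ from (\ref{norm coherent states}) for the denominator (equivalently, treating the denominator by the same stationary-phase expansion as the numerator), not merely its leading order. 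Everything else — the $\mathrm O(\hbar^\infty)$ control of the $V_{\vecz}$-part via (\ref{g_a^{s} evaluated in V_z}) and (\ref{norm coherent states}), and the Leibniz and chain-rule computations — is a routine repetition of Theorem~\ref{theorem asymptotic Berezin transform}.
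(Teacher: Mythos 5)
Your proposal is correct and follows essentially the same route as the paper: reduction to $\vecz=r\hat{\vect e}_1$ via Proposition \ref{extended covariant symbol T_U}, the $W_{\vecz}/V_{\vecz}$ splitting with the $\mathrm O(\hbar^\infty)$ bound from Eq. (\ref{g_a^{s} evaluated in V_z}) and Eq. (\ref{norm coherent states}), substitution of Eq. (\ref{asymptotic function K{-1}}) on $W_{\vecz}$, and the stationary-phase machinery of Theorem \ref{theorem asymptotic Berezin transform}. The only (harmless) deviation is that you expand the denominator by the same stationary-phase argument and let the quotient structure $\mathbf M_1(\phi\rho)-\phi\,\mathbf M_1(\rho)$ kill the constant term, whereas the paper cancels it against the $\mathrm O(\hbar)$ correction in the norm estimate (\ref{norm coherent states}); as you note, these are equivalent.
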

\begin{proof}
By the same argument that was used in the proof of Theorem \ref{theorem asymptotic Berezin transform},  we only need to show this theorem for points $\vect z$ in the form $(r,0,\ldots, 0)$, with $r>0$. For the rest of the proof of Theorem \ref{theorem asymptotic Berezin transform p=1}, we thus assume that $\vecz=(r,0,\ldots,0)$  with $r>0$. 	
	
From Definition \ref{definition Berezin transform}
\begin{eqnarray}\label{integral expression Bemap_{-1}}
\Bemap{-1}(\phi)(\vect z) = \int_{\Sn} \phi(\vecx) \frac{|\Ku (\vecx,\vecz)|^2}{||\Ku(\cdot,\vecz)||^2_{\Sn}} \ds(\vecx).
\end{eqnarray}

Let $C$ the constant mentioned in Lemma \ref{asymptotic p=1} taken greater than one, and $W_\vect z$, $V_\vect z$ be the regions defined in Eq. (\ref{regions W_z, V_z}).
%Let us define the following regions on $\Sn$, with the constant $C$ mentioned in Lemma \ref{asymptotic p=1} taken greater than one. Let $W_\vect z$, $V_\vect z$ be the regions defined in Eq. (\ref{regions W_z, V_z})
%\begin{equation}\label{regions W_z, V_z}
%W_{\vect z}=\left\{\vect x \in \Sn\;|\; C\frac{\Re(\vect x\cdot\vect z)}{|\vect z|}\ge 1\right\}\;\;\mbox{and}\;\; V_{\vect z}=\Sn-W_{\vect z}\;.
%\end{equation}

The integral in Eq. (\ref{integral expression Bemap_{-1}}) can be written as an integral over the region $W_{\vect z}$ plus an
integral over $V_{\vect z}$ denoted by the letters $\mathbf J_W(\vect z)$ and $\mathbf J_V (\vect z)$ respectively.

%Let 
%\begin{equation}
%A=\int\limits_{\vecx\in W} \Phi(\vecx) \frac{|\Ku (\vecx,\vecz)|^2}{||\Ku(\cdot,\vecz)||^2_{\Sn}} \ds(\vecx)\; ,\hspace{0.2cm} B=\int\limits_{\vecx\in V} \Phi(\vecx) \frac{|\Ku (\vecx,\vecz)|^2}{||\Ku(\cdot,\vecz)||^2_{\Sn}} \ds(\vecx).
%\end{equation}

The integral over $W_{\vect z}$ is the one that gives us the main asymptotic term and the integral over $V_{\vect z}$ is actually $\mathrm O(\hbar^\infty)$.%, where $\mathrm O(\hbar^\infty)$ denotes a quantity tending to zero faster than any power of $\hbar$ . 

%The assertions made about 
The assertion $\mathbf J_{V}(\vect z)=\mathrm O(\hbar ^\infty)$ follows from the following inequality
\begin{equation*}
\frac{|\Ku(\vecx,\vecz)|^2}{||\Ku(\cdot,\vecz)||^2_{\Sn}} \le C_1 \left(\frac{|\vecz|}{\hbar}\right)^{n-\frac{3}{2}} e^{2\mu\frac{|\vecz|}{\hbar}}\left[\frac{|\vecz|}{\hbar}+1\right]^2(1+\mathrm O(\hbar)), \quad \vect x \in V_\vect z
\end{equation*}
which in turn is a consequence from Eq. (\ref{g_a^{s} evaluated in V_z}) and the norm estimate for the function  $\K{-1}(\cdot,\vect z)$ (see Eq. (\ref{norm coherent states})).

Let us now study the term $\mathbf J_W(\vect z)$. From Eq. (\ref{asymptotic function K{-1}}) %and (\ref{norm coherent states})
%First we note that for $\vect x \in W_\vect z$,  $|\Im(\vecx\cdot\vecz)|\le |\vecz|\le C\Re(\vecx\cdot\vecz)$, therefore we can use the asymptotic expression of the functions $\K{-1}(\cdot,\vect z)$ given by Eq. (\ref{asymptotic function K{-1}}). Furthermore, from Eq. (\ref{norm coherent states})
%\begin{align}
%\mathbf J_W(\vect z) & =\frac{2\sqrt \pi}{\Gamma(n)} \left(\frac{|\vecz|}{\hbar}\right)^{n-\frac{1}{2}}\int_{W_\vect z}  \frac{|\vecx\cdot\vecz|}{|\vecz|}\Phi(\vecx) \mathrm{exp}\left(\frac{2|\vecz|}{\hbar}\left[\frac{\Re(\vecx\cdot\vecz)}{|\vecz|}-1\right]\right)\nonumber\\[0.2cm]
%&\hspace{3.5cm}\times \left[1+\mathbf E(\vect z,\hbar)\right]\ds (\vect x)\;, \label{A p=1}
%\end{align}
%\begin{align}
%\mathbf J_W(\vect z) & =\frac{2\sqrt \pi}{\Gamma(n)} \left(\frac{r}{\hbar}\right)^{n-\frac{1}{2}}\int_{W_\vect z}  |x_1|\Phi(\vecx) \mathrm{exp}\left(\frac{2r}{\hbar}\left[\Re(x_1)-1\right]\right)\nonumber\\[0.2cm]
%&\hspace{3.5cm}\times \left[1+\mathbf E(\vect z,\hbar)\right]\ds (\vect x), \label{A p=1}
%\end{align}
\begin{align}
\mathbf J_W(\vect z) & =\frac{e^{2r/\hbar}}{||\Ku(\cdot,\vect z)||^2_{\Sn}}\frac{r}{\hbar(n-1)}\int_{W_\vect z}  \phi(\vecx) \mathrm{exp}\left(\frac{2r}{\hbar}\left[\Re(x_1)-1\right]\right)\nonumber\\[0.2cm]
&\hspace{3.5cm}\times 
\left[|x_1|+\hbar \frac{(n-\frac{5}{4}) \Re(x_1)}{r|x_1|}+\mathbf E(\vect z,\hbar)\right]\ds (\vect x), \label{A p=1}
\end{align}
where the error term $\mathbf E(\vect z,\hbar)$ is $\mathrm O(\hbar^2)$
uniformly with respect to $\vect x \in W_\vect z$, because in such a region we have $ 1/| \vect x\cdot \vect z| \le C/|\vect z|$.

Furthermore, for each $\vect x \in V_\vect z$, $\Re(x_1)-1= \frac{\Re(\vecx\cdot\vecz)}{|\vecz|}-1<\frac{1}{C}-1<0$.  Thus we can take the integral defining $\mathbf J_W(\vect z)$ over the whole sphere with an  error $\mathrm O(\hbar^\infty)$.

Then, from Eqs. (\ref{A p=1}) and (\ref{Auxiliar A-1})
\begin{equation*}
\mathbf J_W(\vect z)= \frac{e^{2r/\hbar}}{||\Ku(\cdot,\vect z)||^2_{\Sn}}\frac{r}{\hbar(n-1)}\left(\frac{r}{\pi \hbar}\right)^{\frac{1}{2}-n} \left[\mathcal A \Psi_1 + \hbar\frac{n-\frac{5}{4}}{r} \mathcal A \Psi_2 + \mathrm O(\hbar^2)\right],
\end{equation*}
where
\begin{equation*}
\begin{aligned}
\Psi_1(\Re(\vecx),\Im(\vecx))& =\phi(\vecx)|x_1|, \\
\Psi_2(\Re(\vect x), \Im (\vect x)) & = \phi(\vect x) \frac{\Re(x_1)}{|x_1|}
\end{aligned} 
\hspace{1.5cm} \vecx \in \Sn.
\end{equation*}

Following a procedure analogous to that performed in Theorem \ref{theorem asymptotic Berezin transform}, we obtain from Eq.(\ref{auxiliar mathcal A})
\begin{align*}
\mathbf J_W(\vect z)& = \frac{e^{2r/\hbar}}{||\Ku(\cdot,\vect z)||^2_{\Sn}}\frac{r}{\hbar(n-1)}\left(\frac{r}{\pi \hbar}\right)^{\frac{1}{2}-n} \frac{1}{N}\biggl[\Psi(\hat{\mathbf e}_1)+ \frac{\hbar}{4r}\biggl(\Delta_{\vect y \vect y}-\partial_{y_1 y_1}\\
&-(2n-1)\partial_{y_1} - \left(n-\frac{5}{2}\right)\left(n-\frac{3}{4}\right)\biggl) \Psi(\hat{\mathbf e}_1)+\mathrm O(\hbar^2)\biggl].
\end{align*}
Finally, from the norm estimate for the function $\Ku(\cdot,\vect z)$ (see Eq. (\ref{norm coherent states})), the chain rule and the fact that $N=2\pi^n/ \Gamma(n)$ we conclude the proof of this theorem.
\end{proof}

\section{Egorov-type theorem}\label{section Egorov-type theorem}

%\textcolor{red}{Moreover, since the covariant symbol is not only defined for Toeplitz operators, we are interested in the asymptotic expansion as $\hbar \searrow 0$ of the covariant symbol of a bounded linear operator $A$, when $A$ is not a Toeplitz operator. For this reason, we prove in Section \ref{section Egorov-type theorem} an Egorov-type theorem which states that, for a pseudo-differential operator $A_\hbar$ acting on $L^2(\Sn)$ with principal symbol $\wp(A_\hbar)$ (see Appendix \ref{appendix pseudo-differential operators} where we give a brief description of what we mean by semiclassical pseudo-differential operators on a manifold), the covariant Berezin symbol of $A_\hbar$ goes to the composition of $\wp(A_\hbar)$ with the map $\vect z \mapsto (\vect z/|\vect z|, - \imath \vect z)$ when goes $\hbar$ to zero. }

The covariant symbol (see Eq. (\ref{definition Berezin transform})) is not only defined for Toeplitz operators, but for any bounded linear operator. 
Hence it is natural to ask if there exists an analogue of the asymptotic expansion obtained in Theorems  \ref{theorem asymptotic Berezin transform} and \ref{theorem asymptotic Berezin transform p=1} for a more general operator than a Toeplitz operator. For this reason, in this section we prove an Egorov-type theorem which states that, for a pseudo-differential operator $A_\hbar$ acting on $L^2(\Sn)$ with principal symbol $\wp(A_\hbar)$ (see Appendix \ref{appendix pseudo-differential operators} where we give a brief description of what we mean by semiclassical pseudo-differential operators on a manifold), the covariant symbol of $A_\hbar$ goes to the composition of $\wp(A_\hbar)$ with the map $\vect z \mapsto (\vect z/|\vect z|, - \imath \vect z)$ when goes $\hbar$ to zero. Namely,

%The covariant symbol (see Eq. (\ref{definition Berezin transform}) is not only defined for Toeplitz operators, but for any bounded linear operator. Hence it is natural to ask whether there exists an analogue of the asymptotic expansion obtained in Theorems  \ref{theorem asymptotic Berezin transform} and \ref{theorem asymptotic Berezin transform p=1} for a more general operator than a Toeplitz operator. For this reason, in this section we prove an Egorov-type theorem which relates the principal symbol of a given semiclassical pseudo-differential operator $A_\hbar$ of order zero acting on $L^2(\Sn)$ with its covariant Berezin symbol in the semiclassical limit $\hbar \to 0$. Namely,

\begin{theorem}\label{egorov theorem}
	Let $p=0,-1$ and $A_\hbar$ be a semiclassical pseudo-differential operator of
	order zero with domain in $L^2(\Sn)$, $n \ge 2$. Then for $\vect z \in \mathbb C^n-\{\vect 0\}$ and $\hbar \searrow 0$ we have
\begin{eqnarray}\label{covariant symbol pseudo-differencial operator}
	\frac{\left\langle  A_\hbar \K{p}(\cdot,\vect z)
		,\K{p}(\cdot,\vect z)\right\rangle_{\Sn}}{\left\langle \K{p}(\cdot,\vect z),\K{p}(\cdot,\vect z) \right\rangle_{\Sn}} = \wp(A_\hbar) (\vect z/|\vect z|,-\imath \vect z) + \mathrm O(\hbar)\;,
\end{eqnarray}
where $\wp(A_\hbar)$ is the principal symbol of the semiclassical pseudo-differential operator $A_\hbar$.
\end{theorem}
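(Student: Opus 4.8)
The plan is to reduce the statement to a stationary-phase computation, just as in the proofs of Theorems \ref{theorem asymptotic Berezin transform} and \ref{theorem asymptotic Berezin transform p=1}, but now carrying along the full symbol of $A_\hbar$ rather than a multiplication operator. First I would exploit the $\mathrm{SU}(n)$-equivariance: since both sides of \eqref{covariant symbol pseudo-differencial operator} transform compatibly under $\mathrm T_U$ (the left side by Proposition \ref{extended covariant symbol T_U}, conjugating $A_\hbar$ by $\mathrm T_U$, which preserves the class of order-zero pseudo-differential operators and its principal symbol), it suffices to prove the identity at a single point $\vect z = r\hat{\vect e}_1$, $r>0$. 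Next I would use the asymptotic expansion of $\K{p}(\cdot,\vect z)$ for $p=0,-1$: for $p=0$ it is literally the exponential $e^{\vecx\cdot\vecz/\hbar}$, and for $p=-1$ Proposition \ref{proposition g_a^{s} evaluated} gives, on the region $W_{\vect z}$, a leading term of the form $[\vecx\cdot\vecz/(\hbar(n-1))]^{1/2}e^{\vecx\cdot\vecz/\hbar}(1+\mathrm O(\hbar))$, while on $V_{\vect z}$ the contribution is $\mathrm O(\hbar^\infty)$ by the same argument as in Theorem \ref{theorem asymptotic Berezin transform p=1}. In both cases, up to a harmless prefactor and an $\mathrm O(\hbar^\infty)$ cut-off, the relevant object is
\begin{equation*}
\frac{1}{\|\K{p}(\cdot,\vect z)\|_{\Sn}^2}\int_{\Sn} \overline{\left(A_\hbar^{*}e^{\overline{\vecx\cdot\vecz}/\hbar}\right)(\vect x)}\, e^{\vecx\cdot\vecz/\hbar}\, a_p(\vect x,\hbar)\, \ds(\vect x),
\end{equation*}
with $a_p$ the (bounded, smooth on $W_{\vect z}$) amplitude coming from the expansion of $\K{p}$; equivalently one writes $\langle A_\hbar \K{p}(\cdot,\vect z),\K{p}(\cdot,\vect z)\rangle$ and moves $A_\hbar$ onto the second slot as $A_\hbar^*$.

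The core step is then to evaluate $A_\hbar$ applied to the oscillatory factor $e^{\vecx\cdot\vecz/\hbar} = e^{r(x_1+\overline{x_1})/(2\hbar)}\cdot(\text{polynomial correction for }p=-1)$. Writing $\vecx\cdot\vecz = r\,\Re(x_1)\cdot 2/2 = r\,x_1$ when $\vect z=r\hat{\vect e}_1$ — more precisely $\vecx\cdot\vecz = r\overline{x_1}$ with our convention, whose real part on $\Sn$ is $r\,\Re(x_1)$ — the function $e^{\vecx\cdot\vecz/\hbar}$ is a WKB state $e^{\imath\varphi(\vect x)/\hbar}$ with (complex) phase $\varphi(\vect x) = -\imath r\overline{x_1}$, whose real part $\Re\varphi$ has its maximum (value $r$, attained at $x_1=1$) exactly at the single point $\hat{\vect e}_1\in\Sn$. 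By the standard symbolic calculus for semiclassical pseudo-differential operators acting on such exponentials (the "$e^{-\imath\varphi/\hbar}A_\hbar e^{\imath\varphi/\hbar}$" expansion, valid on the manifold $\Sn$ via the local description in Appendix \ref{appendix pseudo-differential operators}), one has
\begin{equation*}
\left(A_\hbar e^{\imath\varphi/\hbar}\right)(\vect x) = e^{\imath\varphi(\vect x)/\hbar}\left[\wp(A_\hbar)(\vect x, d\varphi(\vect x)) + \hbar\, r_1(\vect x) + \mathrm O(\hbar^2)\right],
\end{equation*}
where $d\varphi$ is the (co)differential of the phase. The differential of $\varphi(\vect x) = -\imath\,\vecx\cdot\vecz$ at the base point is, in the cotangent variable, exactly $-\imath\vect z$ (restricted to $T\Sn$), which is precisely the fiber point appearing on the right-hand side of \eqref{covariant symbol pseudo-differencial operator}; and the base point selected by the stationary-phase localization is $\hat{\vect e}_1 = \vect z/|\vect z|$. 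Substituting this into the integral, the $e^{\imath\varphi/\hbar}\cdot\overline{e^{\imath\varphi/\hbar}}$ product becomes $e^{-2(r-r\Re(x_1))/\hbar}$, a Laplace-type weight concentrating at $\hat{\vect e}_1$, so a single application of the stationary phase / Laplace method (exactly as encapsulated in Eq.~\eqref{auxiliar mathcal A} and the normalization $N = 2\pi^n/\Gamma(n)$) shows that the leading term of the quotient is $\wp(A_\hbar)(\vect z/|\vect z|, -\imath\vect z)$, with the remainder $\mathrm O(\hbar)$ coming both from the subleading term $r_1$ and from the $\mathrm O(\hbar)$ corrections in the norm estimate \eqref{norm coherent states} and in the amplitude $a_p$.

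The main obstacle I expect is making precise, and rigorously controlling, the action of the pseudo-differential operator $A_\hbar$ on the complex-phase WKB state $e^{\vecx\cdot\vecz/\hbar}$ on the manifold $\Sn$: one must patch local coordinate charts, check that the phase $-\imath\vecx\cdot\vecz$ restricted to each chart is a legitimate (admissible, non-degenerate at its unique critical point) phase for the oscillatory-integral calculus, verify that the cotangent vector $d\varphi$ at $\hat{\vect e}_1$ genuinely equals the restriction of $-\imath\vect z$ to $T^*_{\hat{\vect e}_1}\Sn$, and ensure that the off-diagonal and off-$W_{\vect z}$ contributions — including the tails of the amplitude $a_p$ for $p=-1$ and the non-compactly-supported nature of $A_\hbar$'s Schwartz kernel — are all $\mathrm O(\hbar^\infty)$. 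Once the symbolic expansion of $A_\hbar e^{\imath\varphi/\hbar}$ is in hand, the remaining stationary-phase bookkeeping is essentially identical to the computations already carried out in Section \ref{section Asymptotic expansion of the Berezin transform}, so I would cite those steps rather than repeat them, and only spell out the new ingredient, namely the $A_\hbar$-on-WKB-state expansion and the identification of the limiting symbol point $(\vect z/|\vect z|, -\imath\vect z)$.
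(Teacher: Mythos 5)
Your overall strategy --- reduce to $\vecz=r\hat{\vect e}_1$ by $\mathrm{SU}(n)$-equivariance (using Propositions \ref{extended covariant symbol T_U} and \ref{T_U A_hbar T_U pseudo-differential operator}), discard the $V_{\vect z}$ region and the off-diagonal/off-support contributions as $\mathrm O(\hbar^\infty)$, and extract the leading term by stationary phase localized at $\hat{\vect e}_1$ with covector $-\imath\vecz$ --- is exactly the paper's, and your identification of the limiting point $(\vecz/|\vecz|,-\imath\vecz)$ is correct. The difference is in how the stationary-phase step is organized. You propose a two-stage argument: first expand $A_\hbar e^{\imath\varphi/\hbar}$ as $e^{\imath\varphi/\hbar}[\wp(A_\hbar)(\vecx,d\varphi(\vecx))+\mathrm O(\hbar)]$ pointwise, then integrate against the resulting Laplace weight. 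The paper instead writes each localized piece $\mathfrak t_{\mathfrak c}A_\hbar\varrho_{\mathfrak c}\K{p}(\cdot,\vecz)$ as a single oscillatory integral in all variables $(\theta,\vect v,\tilde\theta,\tilde{\vect v},p_\theta,p_{\vect v})$ with one complex phase $\mathfrak f_1$ of nonnegative imaginary part, and applies H\"ormander's stationary phase theorem once, at the unique critical point $(0,\vect 0,0,\vect 0,-\lambda,\vect 0)$. This one-shot formulation is what makes the argument close without extra machinery: the obstacle you yourself flag --- that for $\vecx\ne\hat{\vect e}_1$ the covector $d\varphi(\vecx)$ is genuinely complex, so $\wp(A_\hbar)(\vecx,d\varphi(\vecx))$ is not defined for a symbol in the real-variable class $\mathrm S_{4n-2}(\langle\cdot\rangle^m)$ without almost-analytic extensions --- simply never arises, because the vanishing of the imaginary part of the combined phase together with stationarity of its real part selects the one point where the covector is real. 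As written, your intermediate WKB expansion is not the standard real-phase symbolic calculus and would need either almost-analytic extension of the symbol in the fiber variable or a shrinking-neighborhood localization; this is the one place where your outline is not yet a proof. The paper also carries out bookkeeping that you compress but that constitutes most of the written argument: the explicit charts $U_{\mathfrak c}$ and partition of unity (with the verification that only the chart containing $\hat{\vect u}_1$ contributes a non-$\mathrm O(\hbar^\infty)$ term), the regularizing factor $(1+\hbar\mathrm M)^k(1+p_\theta^2+p_{\vect v}^2)^{-k}$ and the consequent Fa\`a di Bruno expansion of $\mathrm M^{q}\K{p}(\cdot,\vecz)$ in terms of the derivatives $g^{(d)}$ --- which is precisely why Proposition \ref{proposition g_a^{s} evaluated} controls derivatives of \emph{all} orders of $g_a$ on both $W_{\vect z}$ and $V_{\vect z}$ --- and the replacement of $a_{\kappa_1}$ by a finite partial sum $\Psi_N$ of its classical expansion before invoking stationary phase. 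None of this changes your answer, but it is where the actual work lives.
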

\begin{proof}
	
Let $U\in \mathrm{SU}(n)$ and $\mathrm T_U$ be the unitary transformation defined in Proposition \ref{extended covariant symbol T_U}. Let us first note that by Propositions \ref{extended covariant symbol T_U} and \ref{T_U A_hbar T_U pseudo-differential operator}, the validity of Eq. (\ref{covariant symbol pseudo-differencial operator}) for $A_\hbar$ at $\vecz$ is equivalent to its validity for $\mathrm T_{U^{-1}}A_\hbar \mathrm T_U$ at $U^{-1} \vect z$.
Since any given $\vect z$  can be mapped by a suitable $U\in \mathrm{SU}(n)$ into a point of the form $\vect z= \lambda \hat{\vect u}_1$ with $\lambda= |\vect z|$ and $\hat{\vect u}_1=(1,0,\ldots,0)$ a canonical unit vector in $\mathbb R^{n}$, it is indeed enough to prove (\ref{covariant symbol pseudo-differencial operator}) only for points $\vect z$ of the latter form.

For the rest of the proof of Theorem \ref{egorov theorem}, we thus assume that $\vecz=\lambda \hat{\vect u}_1$  with $\lambda>0$.

Let us identify $\Sn$ with $S^{2n-1}$ via the map $\Upsilon$, which sends $\vect x=(x_1,\ldots,x_n) \in \Sn$ %, $x_\ell=a_\ell+ \imath b_\ell$ , to $(a_1,b_1,\ldots,a_n,b_n) \in S^{2n-1}$
 to $(\Re(x_1),\Im(x_1),\ldots,\Re(x_n),\Im(x_n)) \in S^{2n-1}$.
 
%Let us consider the following particular charts $(U_{\mathfrak c},\kappa_{\mathfrak c})$ of $\Sn$,  with $U_{\mathfrak c}=\kappa_{\mathfrak c}^{-1} \mathfrak A$, $\mathfrak c=1,2,\ldots,2n-1$, where

Let us consider the following particular charts $(U_{\mathfrak c},\kappa_{\mathfrak c}\circ \Upsilon)$ of $\Sn$, with $U_{\mathfrak c}=\Upsilon^{-1}\circ \kappa_{\mathfrak c}^{-1} \mathfrak A$, $\mathfrak c=1,2,\ldots,2n-1$, where
\begin{equation*}
\mathfrak A=\left\{(\theta,v_3,\ldots,v_{2n})\in \mathbb R^{2n-1}\;|\;-\pi<\theta<\pi,\;\;\sum_{q=3}^{2n}v_q^2< 1-\frac{1}{8(n-1)}\right\}
\end{equation*}
and $\kappa_{\mathfrak c}^{-1}: \mathfrak A \to \Upsilon (U_{\mathfrak c})$ are defined by
\begin{equation*}
\begin{aligned}
\kappa_1^{-1}(\theta,\mathbf v)& =(r\cos(\theta),r\;\mathrm{sen}(\theta),\mathbf v)\;,\\
\kappa_\mathfrak a^{-1}(\theta,\mathbf v) & = \mathcal R_\mathfrak a \kappa_1^{-1}(\theta,\mathbf v)\;, 
\end{aligned} 
\hspace{1cm}r=\left(1-\sum_{q=3}^{2n} v_q^2\right)^{\frac{1}{2}},
\end{equation*}
with $\mathbf v=(v_3,\ldots,v_{2n})$, $\mathfrak a=2,\ldots,2n-1$ and  $\mathcal R_\mathfrak a \in \mathrm{SO}(2n)$ denoting the matrix which rotates the $(2n-1)$-sphere %(when it is regarded as $S^{2n-1}$) 
on the $(x_1,x_2)$ plane by $\pi$ and then followed by another rotation on the $(x_2,x_{\mathfrak a})$ plane by $\pi/2$. More specifically, $\mathcal R_{\mathfrak a}$ is given by the matrix
\begin{equation}\label{definition R_a}
\mathcal R_\mathfrak a= 
(-\hat{\mathbf e}_1,\hat{\mathbf e}_{\mathfrak a+1},
\hat{\mathbf e}_3, \ldots, \hat{\mathbf e}_{\mathfrak a},\hat{\mathbf e}_2,\hat{\mathbf e}_{\mathfrak a+2},\ldots,\hat{\mathbf e}_{2n})
\end{equation}
where $\{\hat {\mathbf e}_1,\ldots, \hat {\mathbf e}_{2n}\}$ is the canonical basis of $\mathbb R^{2n}$ regarding $\hat {\mathbf e}_{\mathfrak a}$, $\mathfrak a=1,\ldots 2n$, as a column vectors.
%where $\hat{\mathbf e}_{j}$ is the column vector in $\mathbb R^{2n}$ whose entries are zero except the $j$th which is equal to one.
%where $\hat{\mathbf e}_{j}$ denotes the column vector of size $2n$ with a 1 in the $j$th coordinate and 0's elsewhere.
%Let $\hat{\vect u}_1=(1,0,\ldots,0)$ a canonical vector in $\mathbb R^n$, 
Note that $\hat{\vect u}_1=\Upsilon^{-1}(\hat{\mathbf e}_1)\notin U_{\mathfrak a} $  for $\mathfrak a=2,\ldots ,2n-1$, and that $\Sn=\bigcup_{\mathfrak c=1}^{2n-1} U_{\mathfrak c}$ (see Appendix \ref{complex sphere union charts} for details). 

Let $\{\mathfrak t_\mathfrak c\}$ be a partition of the unity associated to the open cover $\{U_{\mathfrak c}\}$. 
Since $\hat{\mathbf u}_1$ only belongs to $U_{1}$ then $\mathfrak t_1(\hat{\vect u}_1)=1$. Consider a set of functions $\{\varrho_\mathfrak c\}$  with $\varrho_\mathfrak c \in C_0^\infty(U_{\mathfrak c})$ such that 
$\mathfrak t_\mathfrak c \varrho_\mathfrak c=\mathfrak t_\mathfrak c$, for $\mathfrak c=1,\ldots,2n-1$. Then
\begin{align}
A_\hbar & = \sum_{\mathfrak c=1}^{2n-1}\left[ \mathfrak t_\mathfrak c A_\hbar \varrho_\mathfrak c + \mathfrak t_\mathfrak c A_\hbar (1-\varrho_\mathfrak c)\right]\;. \label{A_hbar sum partition}
\end{align}

Moreover, since $\mathrm{supp}(\mathfrak t_\mathfrak c)\cap \mathrm{supp}(1-\varrho_\mathfrak c)=\varnothing$ and condition $(\mathbf B)$ in the definition of a semiclassical pseudo-differential operator (see Eq (\ref{insignificante})) we have
%\notamargen{se tiene}{Ver si esta bien la ecuaci\'on de abajo}
\begin{equation}\label{def Ahbar}
\frac{\left\langle\mathfrak t_\mathfrak c A_\hbar (1-\varrho_\mathfrak c) \K{p}(\cdot,\vect z),\K{p}(\cdot,\vect z) \right\rangle_{\Sn}}{\|\K{p}(\cdot,\vect z)\|^2_{\Sn}}
= \mathrm O(\hbar^\infty)\;,\; \mathfrak c=1,\ldots,2n-1\;.
\end{equation}

%From Eqs. (\ref{A_hbar sum partition}) and (\ref{def Ahbar})
%\begin{align*}
%\frac{\langle A_\hbar \K{-1}(\cdot,\vect z), \K{-1}(\cdot,\vect z) \rangle_{\Sn}} {||\K{-1}(\cdot, \vect z)||^2} & = \sum_{\mathfrak c=1}^{2n-1}
%\frac{\langle \mathfrak t_\mathfrak c A_\hbar \varrho_\mathfrak c \K{-1}(\cdot,\vect z), \K{-1}(\cdot,\vect z)\rangle_{\Sn}}{||\K{-1}(\cdot,\vect z)||^2_{\Sn}} + \\
%&  \mathrm O(\hbar^{\infty})
%\end{align*}

The basic idea of the proof is to show that the main contribution to the left hand side of Eq. (\ref{covariant symbol pseudo-differencial operator}) comes from the term $\frac{\langle \mathfrak t_1 A_\hbar \varrho_1 \K{p}(\cdot,\vect z), \K{p}(\cdot,\vect z) \rangle_{\Sn}} {||\K{p}(\cdot, \vect z)||^2_{\Sn}}$. The stationary phase method will be used to obtain such a main contribution.

From the definition of a semiclassical pseudo-differential operator given in Appendix \ref{appendix pseudo-differential operators}, there exist $m\in \mathbb R$, $k \in \mathbb Z_+$ larger than $m+n$ and symbols $a_{\kappa_\mathfrak c}\in \mathrm S_{4n-2} (\langle p_\theta,p_\vect v\rangle^m)$ such that
\begin{align}
& \frac{\big\langle \mathfrak t_\mathfrak c A_\hbar \varrho_\mathfrak c\K{p}(\cdot,\vect z),\K{p}(\cdot,\vect z)\big	\rangle_{\Sn}}{\|\K{p}(\cdot,\vect z)\|^2_{\Sn}}  =
 \frac{(2\pi\hbar)^{-2n+1}}{\|\K{p}(\cdot,\vect z)\|^2_{\Sn}} \int\limits_{(\theta,\vect v)\in 
	\mathfrak A}\int\limits_{(\tilde\theta,\tilde {\vect v})
	\in \mathfrak A} \int\limits_{(p_\theta,p_\vect v)\in \mathbb R^{2n-1}}\nonumber\\
&\hspace{.5cm}\frac{
	(\overline{\K{p}(\cdot,\vect z)}\mathfrak t_\mathfrak c)((\kappa_\mathfrak c\circ\Upsilon)^{-1}(\theta,\vect v))}{|\Sn|} \exp\left(\frac{\imath}{\hbar} [(\theta,\vect v)-(\tilde \theta,\tilde{\vect v})]\cdot (p_\theta,p_\vect v)
\right)\nonumber\\
& \hspace{.5cm} \left(\frac{1+\hbar \mathrm M}{1+p_\theta^2+p_\vect v^2}\right)^k
\biggl((a_{\kappa_\mathfrak c})_t(\theta,\vect v,\tilde{\theta},\tilde{\vect v},p_\theta,p_\vect v;\hbar)\left( \varrho_\mathfrak c  \K{p}(\cdot,\vect z)\right)((\kappa_\mathfrak c \circ \Upsilon)^{-1} (\tilde{\theta},\tilde{\vect v}))  \biggl)\nonumber\\
&\hspace{8.6cm} J(\theta,\vect v)\mathrm dp_\theta \mathrm dp_\vect v\mathrm d\tilde{\theta}
\mathrm d \tilde{\vect v} \mathrm d \theta\mathrm d \vect v,\label{eq13 te}
\end{align}
where we have used the coordinates $(\theta,\mathbf v)$ in $\mathfrak A$ and where $|\Sn|$ denotes the surface area of the complex $n$-sphere, the Jacobian $J(\theta,\vect v)=1$ and the operator $\mathrm M$ is defined by
\begin{equation}\label{def op M}
\mathrm M= \frac{1}{\imath} \left(p_\theta \frac{\partial}{\partial \tilde\theta}+ \sum_{j=3}^{2n} p_{v_j} \frac{\partial}{\partial \tilde v_j}\right).
\end{equation}

Notice that on the right hand side of Eq. (\ref{eq13 te}) we have
\begin{equation}\label{eq9 te}
(1+\hbar \mathrm M)^k((a_{\kappa_\mathfrak c})_t \varrho_\mathfrak c \K{p}(\cdot,\vect z))= \sum_{s=0}^k\sum_{q=0}^s \binom{k}{s}\binom{s}{q} \hbar^s \mathrm M^{s-q}\left[ (a_{\kappa_\mathfrak c})_t \varrho_\mathfrak c\right] \mathrm M^{q} \K{p}(\cdot,\vect z)\;.
\end{equation}
For $q\ge 1$,  the action of the operator $\mathrm M^{q}$ on the function  $\K{p}(\cdot,\vect z)$ can be written as (see formula 0.430-2 of Ref. \cite{G94}))
\begin{align}\label{eq14 te}
\mathrm M^{q} \K{p}(\cdot,\vect z)(\kappa_\mathfrak c^{-1}(\tilde \theta,\tilde{\vect v}))= &  \mathrm M^{q} g\left(\frac{\overline{\vect z \cdot \tilde\vecx_\mathfrak c}}{\hbar}\right)= \sum_{d=1}^q F_{d,q}^\mathfrak c(\tilde\theta,\tilde {\vect v}) \frac{1}{\hbar^d}g^{(d)}\left(\frac{\overline{\vect z \cdot \tilde\vecx_\mathfrak c}}{\hbar}\right)
\end{align}
with $g=g_{\frac{1}{n-1}}$ the function defined in (\ref{def g cap3})  if $p=-1$ or $g$ the exponential function if $p=0$, $\;\tilde \vecx_\mathfrak c=\tilde \vecx_\mathfrak c(\tilde \theta,\tilde {\vect v})=(\kappa_\mathfrak  c\circ \Upsilon)^{-1}(\tilde\theta,\tilde{\vect v})$ and

\begin{equation}\label{def Fdq}
F_{d,q}^\mathfrak c(\tilde\theta,\tilde{\vect v})= \sum \frac{q!}{p_1! ... p_\ell!} \left(\frac{\mathrm M^{1}\overline{\vect z \cdot
		\tilde\vecx_\mathfrak c}}{1!}\right)^{p_1} \left(\frac{\mathrm M^{2}\overline{\vect z \cdot\tilde\vecx_\mathfrak c}}{2!}
\right)^{p_2}...\left(\frac{\mathrm M^{\ell}\overline{\vect z \cdot \tilde\vecx_\mathfrak c}}{\ell!}\right)^{p_\ell},
\end{equation}
where the summation stands for non-negative numbers $p_1,\ldots,p_\ell$ such that $p_1+2p_2+\ldots+\ell p_\ell=q$ and $d=p_1+p_2+\ldots+p_\ell$.

Let us first suppose that $p=-1$. Let $W_{\vect z}$, $V_{\vect z}$ be the regions defined in Eq. (\ref{regions W_z, V_z}). Notice that if $\tilde {\vect x}_\mathfrak c \in W_{\vect z}$, $\mathfrak c=1,\ldots,2n-1,$ then we can use the asymptotic expression of the derivative of any order of $g$ evaluated at $\tilde{\vect x}_\mathfrak c \cdot \vect z/\hbar$ (see Eq. (\ref{g_a^{s} evaluated in W_z})). For this reason, let us regard the integration with respect to the variable $(\tilde \theta, \tilde{\vect v})$ in Eq. (\ref{eq13 te}) over the regions  $\kappa_{\mathfrak c}\circ \Upsilon(W_\vect z \cap U_{\mathfrak c})$ and $\kappa_{\mathfrak c}\circ \Upsilon(V_\vect z\cap U_{\mathfrak c})$ separately and let us call them $\mathfrak I_{W,\mathfrak c}(\vect z)$ and $\mathfrak I_{V,\mathfrak c}(\vect z)$, respectively. Thus we have
\begin{equation*}
\frac{\big\langle \mathfrak t_\mathfrak c A_\hbar \varrho_\mathfrak c \K{-1}(\cdot,\vect z), \K{-1}(\cdot,\vect z)\big	\rangle_{S^n}}{\| \K{-1}(\cdot,\vect z)\|^2_{\Sn}}=\mathfrak I_{W,\mathfrak c}(\vect z)+ \mathfrak I_{V,\mathfrak c}(\vect z)\;.
\end{equation*}

Now we claim that $\mathfrak I_{V,\mathfrak c}(\vect z)= \mathrm O(\hbar^\infty)$. Let $(\tilde\theta,\tilde{\vect v}) \in \kappa_\mathfrak c\circ \Upsilon(V_\vect z \cap U_{	\mathfrak c})$ and $\tilde\vecx_\mathfrak c=(\kappa_\mathfrak c\circ \Upsilon)^{-1}(\tilde\theta,\tilde{\vect v})$. 

From definition of  the operator $\mathrm M$ (see Eq. (\ref{def op M})) we have  $\left|\mathrm M^{b} \tilde\vecx_\mathfrak c \cdot \vect z \right| \le (1+p_\theta^2+p_\vect v^2)^{\frac{b}{2}} \left|\mathrm N^{b} 
\;\tilde\vecx_\mathfrak c \cdot \vect z \right|$, where $\mathrm N= \frac{\partial}{\partial \tilde\theta}+\sum_{j=3}^{2n} \frac{\partial}{\partial \tilde v_j}$. Note that the action of the operator $\mathrm N$ on the function $\tilde{\vect x}_\mathfrak c \cdot\vect z$ involve inverse powers of the variable $\tilde r =(1-|\tilde{\vect v}|^2)^{1/2}$ which are bounded in the support of $\varrho_\mathfrak c$ and therefore do not create any singularities; in fact $\left|\mathrm N^{b} 
\;\tilde\vecx_\mathfrak c \cdot \vect z \right|\le C_1\tilde r^{-b}\le C_1(2\sqrt{2n-2})^b$ for some constant $C_1$. Then, from Eq. (\ref{def Fdq})
\begin{align}
\left|F_{d,q}^\mathfrak c(\tilde\theta,\tilde{\vect v})\right| & \le C_1\langle(p_\theta,p_\vect v)\rangle^{d} %(2\sqrt{n-1})^{d}
\;, \;\;\forall\;(\kappa_{\mathfrak c}\circ \Upsilon)^{-1}(\tilde \theta,\tilde{\vect v}) \in \mathrm{supp}(\varrho_\mathfrak c).\label{eq17 te}
\end{align}
On the other hand, since $a_{\kappa_\mathfrak c} \in S_{4n-2}(\langle(p_\theta, p_\vect v)\rangle^m)$ 
\begin{equation}\label{eq18 te}
\left|\mathrm M^{b}\left[(a_{\kappa_\mathfrak c})_t\varrho_\mathfrak c\right]\right| \le \langle(p_\theta,p_\vect v)\rangle^{b} 
\left|\mathrm N^{b}\left[(a_{\kappa_\mathfrak c})_t \varrho_\mathfrak c\right]\right| \le C_1 \langle(p_\theta,p_\vect v)\rangle^{b+m}.
\end{equation}

Then, from Eqs. (\ref{eq13 te}), (\ref{eq9 te}), (\ref{eq14 te}), (\ref{eq17 te}), (\ref{eq18 te}), Proposition \ref{proposition g_a^{s} evaluated} (specifically Eq. (\ref{g_a^{s} evaluated in V_z})) and  the estimate of the norm of $\K{-1}(\cdot,\vect z)$ (see Eq. (\ref{norm coherent states})) we have
\begin{align*}
\left|\mathfrak I_{V,\mathfrak c}(\vect z)\right| &\le  \frac{C_1 e^{\mu|\vect z|/\hbar} }{\hbar^{\frac{10n-3}{4}}\|\K{-1}(\cdot,\vect z)\|}\int\limits_{(\theta,\vect v)\in \mathfrak A} 
\int\limits_{(p_\theta,p_\vect v)\in \mathbb R^{2n-1}} \frac{\left|(\overline{\K{-1}(\cdot,\vect z)}\mathfrak t_\mathfrak c)((\kappa_\mathfrak c \circ \Upsilon)^{-1}(\theta,\vect v))\right|}
{(1+p_\theta^2+p_\vect v^2)^{\frac{k-m}{2}}}\\
&\hspace{8.8cm}\mathrm dp_\theta \mathrm dp_\vect v \mathrm d\theta \mathrm d\vect v.%\\
%& \le C_1 \frac{e^{\mu|\vect z|/\hbar} }{\hbar^{\frac{5n+2}{4}}} \;.
\end{align*}
with $\mu<0$. Using the Cauchy-Schwartz inequality and that $n<k-m$ we conclude that $\mathfrak I_{V,\mathfrak c}(\vect z)=\mathrm{O}(\hbar^\infty)$.

Let us now study the terms $\mathfrak I_{W,\mathfrak c}(\vect z)$. From Eqs.   (\ref{eq13 te}), (\ref{eq9 te}), (\ref{eq14 te}), (\ref{def Fdq}), the estimate of the norm of $\Ku(\cdot,\vect z)$ (see Eq. (\ref{norm coherent states})) and Proposition \ref{proposition g_a^{s} evaluated} (specifically Eq. (\ref{g_a^{s} evaluated in W_z}) with $a=\frac{1}{n-1}$) we have
\begin{align}
\mathfrak I_{W,\mathfrak c}(\vect z) & =\frac{\hbar|\vect z|^{n-\frac{3}{2}}(n-1)e^{\frac{-2|\vect z|}{\hbar}}}{(\pi\hbar)^{n-\frac{1}{2}} (2\pi\hbar)^{2n-1}} \int\limits_{(\theta,\vect v)\in \kappa_\mathfrak c \circ \Upsilon(W_\vect z \cap U_{\mathfrak c})}\; \int\limits_{(\tilde\theta,\tilde {\vect v}) \in \kappa_\mathfrak c \circ \Upsilon(W_\vect z \cap U_{\mathfrak c})}
\int\limits_{(p_\theta,p_{\vect v})\in \mathbb R^{2n-1}}\nonumber\\
&\hspace{0.5cm} \left(\frac{\vect z \cdot \vecx_\mathfrak c}{\hbar(n-1)}\right)^{\frac{1}{2}} \mathfrak t_\mathfrak c(\vecx_\mathfrak c)
\exp\left(\frac{\imath}{\hbar} [(\theta,\vect v)-(\tilde \theta,\tilde{\vect v})]\cdot (p_\theta,p_\vect v)\right) \exp\left(\frac{\vect z \cdot \vecx_\mathfrak c}{\hbar}\right)\nonumber\\
&\hspace{0.5cm}\sum_{s=0}^k\sum_{q=0}^s \sum_{d=1}^q\binom{k}{s}\binom{s}{q} 
\frac{\hbar^{s-d}}{(1+p_\theta^2+p_\vect v^2)^{k}}\;\mathrm M^{s-q}\left[ (a_{\kappa_\mathfrak c})_t 
\varrho_\mathfrak c(\tilde{\vect x}_\mathfrak c)\right] F_{d,q}^\mathfrak c(\tilde\theta,\tilde {\vect v}) \nonumber\\
&\hspace{0.5cm}\left(\frac{\overline{\vect z \cdot 	\tilde\vecx_\mathfrak c}}{\hbar(n-1)}\right)^{\frac{1}{2}} \exp\left(\frac{\overline{\vect z \cdot \tilde\vecx_\mathfrak c} }{\hbar}\right) \big[1+\mathrm O(\hbar)\big]
\mathrm dp_\theta \mathrm dp_\vect v\mathrm d\tilde\theta \mathrm d\tilde{\vect v}\mathrm d\theta \mathrm d\vect v\;.\nonumber
\end{align}
with  $\tilde \vecx_\mathfrak c=\tilde \vecx_\mathfrak c(\tilde \theta,\tilde {\vect v})=(\kappa_\mathfrak c\circ \Upsilon)^{-1}(\tilde\theta,\tilde{\vect v})$, $\vecx_\mathfrak c=\vecx_\mathfrak c( \theta, {\vect v})=(\kappa_\mathfrak  c\circ \Upsilon)^{-1}(\theta,{\vect v})$. 

Then, as $\vect z=\lambda \hat{\mathbf u}_1$ we have 
%We first prove the theorem for the case $\vect z= \lambda \hat{\vect u}_1$ for some $\lambda >0$ and $\hat{\vect u}_1=(1,0,\ldots,0)$ a canonical unit vector in $\mathbb R^{n}$. Thus we have
\begin{align}
\mathfrak I_{W,\mathfrak c} (\vect z)& = \frac{\lambda^{n-\frac{3}{2}} }{(2\pi\hbar)^{2n-1}(\pi\hbar)^{n-\frac{1}{2}}} \int\limits_{(\theta,\vect v)\in \kappa_\mathfrak c \circ \Upsilon(W_\vect z \cap U_{\mathfrak c})}\int\limits_{(\tilde\theta,\tilde {\vect v}) \in \kappa_\mathfrak c \circ \Upsilon(W_\vect z \cap U_{\mathfrak c})}\int\limits_{(p_\theta,p_\vect v)\in \mathbb R^{2n-1}} \nonumber\\
& \hspace{0.5cm}
F_{d,q}^\mathfrak c(\tilde\theta,\tilde {\vect v})  \frac{\hbar^{s-d} \mathfrak t_\mathfrak c (\vect x_{\mathfrak c})
}{(1+p_\theta^2+p_\vect v^2)^{k}} 
\exp\left(\frac{\imath}{\hbar} \mathfrak f_\mathfrak c(\theta,\vect v,\tilde\theta,\tilde{\vect v},p_\theta,p_\vect v)\right) \left(\lambda\mathfrak f^{\mathfrak c}(\theta,\vect v)\right)^{\frac{1}{2}} \nonumber\\
&\hspace{0.5cm}\left(\lambda\mathfrak f^{\mathfrak c}(-\tilde \theta,-\tilde{\vect v})\right)^{\frac{1}{2}}
\sum_{s=0}^k\sum_{q=0}^s \sum_{d=1}^q \binom{k}{s}\binom{s}{q} 
\mathrm M^{s-q}\Big[ (a_{\kappa_\mathfrak c})_t (\theta,\vect v,\tilde\theta,\tilde{\vect v},p_\theta,p_\vect v)\nonumber\\
& \hspace{0.5cm}\varrho_\mathfrak c(\tilde{\vect x}_{\mathfrak c})\Big]\big[1+\mathrm O(\hbar)\big] \mathrm dp_\theta \mathrm dp_\vect v\mathrm d\tilde\theta \mathrm d\tilde{\vect v}\mathrm d\theta \mathrm d\vect v\;,\label{eq19 te}
\end{align}
where
\begin{equation*}
\mathfrak f_\mathfrak c(\theta,\vect v,\tilde\theta,\tilde{\vect v},p_\theta,p_\vect v)= 2\imath \lambda +p_\theta(\theta-\tilde\theta) +p_\vect v(\vect v-\tilde{\vect v})-\imath\lambda [\mathfrak f^\mathfrak c(\theta,\vect v)+\mathfrak f^\mathfrak c(-\tilde \theta,-\tilde{\vect v})]
\end{equation*}
with $\mathfrak f^{1}(\theta,\vect v)=re^{\imath \theta}$ and $\mathfrak f^{\mathfrak c}(\theta,\vect v)=-r\cos \theta+ \imath v_{\mathfrak c+1}$ for $\mathfrak c=2,\ldots,2n-1$.

From Eqs. (\ref{eq19 te}), (\ref{eq17 te}), (\ref{eq18 te}) and using the arguments given above to study $\mathfrak I_{V,\mathfrak c}(\vect z)$, it can be show that $\mathfrak I_{W,\mathfrak c}(\vect z)=\mathrm O(\hbar^\infty)$ for $\mathfrak c=2,\ldots,n$.

Let us now study the term $\mathfrak I_{W,1}$. Since $a_{\kappa_1}\in \mathrm S_{4n-2}(\langle(p_\theta,p_\vect v)\rangle^m)$ is a classical symbol, there exist a sequence $(a_{\kappa}^{(j)})\subset \mathrm S_{4n-2}\left(\langle(p_\theta,p_\vect v)\rangle^m\right)$ (independent of $\hbar$) such that if $N>3n/2$ and $\Psi_{N}=\sum_{\ell=0}^N\hbar^\ell a_{\kappa}^{(\ell)}$, then for any $\mathbf b \in\mathbb Z_+^{2n-1}$, there exist $\hbar_{N,\mathbf b}>0$ and $C_{N,\mathbf b}$ such that
\begin{equation}
\left|\partial^{\mathbf b} \left(a_{\kappa_1}-\Psi_{N} \right)\right| \le C_{N,\mathbf b}\;\hbar^N \langle(p_\theta,p_\vect v)\rangle^m\label{eq20 te}
\end{equation}
uniformly in $\mathbb R^{4n-2}\times (0,\hbar_{N,\mathbf b})$.

Let us denote by $\mathfrak I_{W, 1}^1(\vect z) $ and $\mathfrak I_{W,1}^2(\vect z) $ the right side of Eq. (\ref{eq19 te}) %(with $\mathfrak c = 1$) 
with $a_{\kappa_1} - \Psi_{N} $ and $\Psi_{N} $ instead of $a_{\kappa_1} $ respectively. Note that $ \mathfrak I_{W, 1}(\vect z) =\mathfrak I_{W, 1}^1(\vect z) + \mathfrak I_{W, 1}^2 (\vect z)$. Furthermore, from Eqs. (\ref{eq19 te}), (\ref{eq17 te}), (\ref{eq18 te}) and (\ref {eq20 te}) % and using the argument given above to study $\mathfrak I_{V,\mathfrak c}$, 
it can be shown that $\lim_{\hbar \to 0}\mathfrak I_{W, 1}^1 (\vect z)=0$

Finally, let us use the stationary phase method  to study the term $\mathfrak I_{W,\mathfrak 1}^2(\vect z)$. 

The function $\mathfrak f_1$ has a non-negative imaginary part and one critical point $\boldsymbol\vartheta_0$ that contributes to the main asymptotic given by 
\begin{equation*}
\boldsymbol\vartheta_0:=(\theta_0,\vect v_0,\tilde \theta_0,\tilde{\vect v}_0,p_{\theta_0},p_{\vect v_0})=(0,\mathbf 0, 0,\mathbf 0,-\lambda,\mathbf 0).
\end{equation*}

Since  $\det(\mathfrak f_1''(\boldsymbol\vartheta_0)/2\pi \imath\hbar)=
\lambda^{2n-1}/2^{4n-2}(\pi\hbar)^{6n-3}$ (with $\mathfrak f_1''$ the Hessian matrix of $\mathfrak f_1$), $\mathfrak f_1(\boldsymbol{\vartheta_0})=0$, $\mathfrak t_1((\kappa_1\circ \Upsilon)^{-1} (\theta_0,\vect v_0)) =\mathfrak t_1(\hat{\mathbf u}_1)=1$ and $\varrho_1(\hat{\mathbf u}_1)=1$ , then we obtain from the stationary phase method (see Ref. \cite{H90}) 
\begin{align*}
\mathfrak I_{W,1}^2 (\vect z)& =  a_{\kappa_1}^{(0)}(0,\vect 0, -\lambda, \vect 0) + \mathrm O(\hbar)\\
& =\wp(A_\hbar)\left(\hat{\mathbf u}_1,-\lambda \imath \hat{\mathbf u}_1\right)+\mathrm O(\hbar)\;.
\end{align*}
where we have used that $F_{s,s}^1(\tilde\theta_0,\tilde {\vect v}_0)= (\lambda^2)^s$ and the definition of the principal symbol. Thus, we have proved Eq. (\ref{covariant symbol pseudo-differencial operator}) if $p=-1$.
%for the particular case $\vect z= \lambda \vect u_1$, $\lambda>0$. For the general case, given $\vect z \in\Sn$, there exist $U \in \mathrm{SU}(n)$ such that $\vect z= \lambda U \hat{\vect u}_1$, with $\lambda=|\vect z|$. From Propositions \ref{extended covariant symbol T_U} and \ref{T_U A_hbar T_U pseudo-differential operator} we obtain Eq. (\ref{covariant symbol pseudo-differencial operator}) if $p=-1$.

The case $ p=0 $ is proved in the same way. % as the case $ p=-1 $.% only that in this case we can directly apply the stationary phase method in Eq. (\ref{eq13 te}).
\end{proof}

\appendix
\section{Pseudo-differential operators}\label{appendix pseudo-differential operators}

In this appendix, we give a brief description of what we mean by semiclassical pseudo-differential operators on a manifold. See \cite{M02} y \cite{Z-E} for details on definitions of semiclassical pseudo-differential operators on $\mathbb R^n$ and manifolds, respectively.
\begin{definition}
	For $\boldsymbol\eta \in \mathbb R^n$, let us define $\langle \boldsymbol\eta \rangle : =\sqrt{1+|\boldsymbol\eta|^2}$, where
	 $|\boldsymbol\eta|^2=\eta_1^2+\ldots +\eta_n^2$.
\end{definition}
\begin{definition}
	Let $\hbar_0 > 0$ and $m \in \mathbb R$. The space of symbols $\mathrm S_{2n}(\langle\boldsymbol\xi\rangle^m)$ denotes the set of functions $p=p(\mathbf b,\boldsymbol\xi;\hbar)$, $\mathbf b,\boldsymbol\xi 
	\in\mathbb R^n$, $\hbar\in(0,\hbar_0]$ which are smooth in the variable $\vecz=(\mathbf b,\boldsymbol\xi)$ and for any $\mathbf k \in  \mathbb Z_+^{2n}$
	\begin{equation*}
	\partial ^{\mathbf k}p(\mathbf b,\boldsymbol\xi;\hbar)=\mathrm O(\langle \boldsymbol\xi\rangle^m) 
	\end{equation*}
	uniformly with respect to $(\vecz,\hbar)  \in\mathbb R^{2n} \times(0,\hbar_0]$.
\end{definition}

\begin{definition}\label{simbolo clasico}
	Let $m \in\mathbb R$, $a \in \mathrm S_{2n}(\langle\boldsymbol\xi\rangle^m)$ and $(a^{(j)})$ a sequence of symbols of $\mathrm S_{2n}(\langle \boldsymbol \xi\rangle^m)$ independent of $\hbar$, with $a^{(0)}$ non identically zero. 
	Then we say that $a$ is asymptotically equivalent to the formal sum
	 $\sum_{j=0}^\infty \hbar^j a^{(j)}$ in $\mathrm S_{2n}(\langle \boldsymbol\xi 	\rangle^m)$ if and only if for any $N \in \mathbb N$ and for any $\mathbf k\in \mathbb Z_+^n$ there exist $\hbar_{N,\mathbf k}>0$ and  $C_{N,\mathbf k}>0$ such that
	\begin{equation*}
	\left|\partial^{\mathbf k}\left(a-\sum_{j=0}^N \hbar^j a^{(j)}\right)\right| \le C_{N,\mathbf k} \hbar^N \langle \boldsymbol\xi\rangle^m
	\end{equation*}
	uniformly on $\mathbb R^{2n} \times (0,\hbar_{N,\mathbf k}]$. In this case we say that $a$ is a classical symbol.
\end{definition}

Now we will give the definition of semiclassical pseudo-differential operator on $\mathbb R^n$. In order

\begin{definition}\label{def op pseudo-diferencial}
	Let $m \in \mathbb R$,  $a \in \mathrm S_{2n}(\langle \boldsymbol\xi \rangle^m)$ and  $t\in[0,1]$. The action of a semiclassical pseudo-differential operator in $\mathbb R^n$, denoting by $\Op(a)$, on a smooth function
	$f \in C_0^\infty(\mathbb R^n)$ is defined by
	\begin{align*}
	\Op(a)f(\mathbf b)=\frac{1}{(2\pi\hbar)^n} \int_{\mathbf c \in \mathbb R^n} \int_{\boldsymbol\xi \in \mathbb R^n} & e^{\imath(\mathbf b-\mathbf c)\cdot 
		\boldsymbol \xi/\hbar}\\
	&\big(\mathrm L(\boldsymbol\xi,\hbar \mathrm D_{\mathbf c})\big)^k\big(a_t(\mathbf b,\mathbf c,\boldsymbol
	\xi;\hbar)f(\mathbf c)\big) \mathrm d \mathbf c \mathrm d \boldsymbol\xi\;,
	\end{align*}
	where $a_t(\mathbf b,\mathbf c,\boldsymbol \xi;\hbar)=a((1-t)\mathbf b+t\mathbf c,\boldsymbol\xi;\hbar)$, $k$ is any non-negative integer number such that $m+n<k$ and
	\begin{equation*}
	\mathrm L(\boldsymbol\xi,\hbar \mathrm D_{\mathbf c})=\frac{1+\hbar \boldsymbol\xi \cdot \mathrm D_{\mathbf c}}{1+|\boldsymbol\xi|^2} \;,
	\hspace{1cm}\mathrm D_{\mathbf c}=\frac{1}{\imath }\left(\frac{\partial}{\partial c_1},\ldots,\frac{\partial}{\partial c_n}\right).
	\end{equation*}
	If $a$ is a classical symbol, the function $a^{(0)}$ will be called principal symbol of the semiclassical pseudo-differential operator  $\Op(a)$ and will be denoted by $\wp(\Op(a))$.
\end{definition}

\begin{remark}
	In the previous definition, the operator $\mathrm L(\boldsymbol\xi,\hbar \mathrm D_{\mathbf c})$ is introduced in order to make sense of the possibly undefined integral
\begin{equation*}
\int_{\mathbf b \in \mathbb R^n} \int_{\boldsymbol\xi \in\mathbb R^n} e^{\imath(\mathbf b-\mathbf c)\cdot \boldsymbol \xi/\hbar}a_t(\mathbf b,\mathbf c,\boldsymbol \xi;\hbar)f(\mathbf c) \mathrm d \mathbf c \mathrm d \boldsymbol\xi.
\end{equation*}
\end{remark}
\begin{remark}
	For the values $t=0$, $t=\frac{1}{2}$ and $t=1$, $\mathrm {Op}_\hbar^t(a)$  is called standard quantization or left quantization, Weyl quantization and right quantization respectively.
\end{remark}
\noindent\textbf{Notation.} Let $\mathrm M$ be a smooth $n$-dimensional manifold with atlas $\mathcal F$. The elements of $\mathcal F$ are called charts and will be denoted by $(U_\kappa,\kappa)$ with $\kappa:U_\kappa \to V_\kappa$ and $\kappa$ an homeomorphism between the open sets $U_\kappa \subset \mathrm M$ and $V_\kappa \subset \mathbb R^n$. 

%In order to give a brief description of the concept of semiclassical pseudo-differential operator on manifolds we first need to introduce notation. Let $\mathrm M$ be a smooth $n$-dimensional manifold with atlas $\mathcal F$. The elements of $\mathcal F$ are called charts and will be denoted by $(U_\kappa,\kappa)$ with $\kappa:U_\kappa \to V_\kappa$ and $\kappa$ an homeomorphism between the open sets $U_\kappa \subset \mathrm M$ and $V_\kappa \subset \mathbb R^n$. 

Now we will give a brief description of the concept of semiclassical pseudo-differential operator on %a smooth $n$-dimensional manifold 
$\mathrm M$.

%Now we will give a brief description of the concept of semiclassical pseudo-differential operator on  a smooth $n$-dimensional manifold $\mathrm M$. Let $\mathcal F$ be an atlas for $\mathrm M$, whose elements are called charts and will be denoted by $(U_\kappa,\kappa)$ with $\kappa:U_\kappa \to V_\kappa$ and $\kappa$ an homeomorphism between the open sets $U_\kappa \subset \mathrm M$ and $V_\kappa \subset \mathbb R^n$.
%$\Sn$.
\begin{definition}
	Let $A_\hbar: C^\infty(\mathrm M) \to C^\infty(\mathrm M)$ be a linear operator. %Consider $\Sn$ as a smooth manifold  with an atlas $\mathfrak F$. The elements of $\mathfrak F$ are called charts and denoted by $U_\kappa,\kappa$.
	 We say that $A_\hbar$ is a semiclassical pseudo-differential operator of order zero iff there exist $m \in\mathbb R$ and $0\le t \le 1 $ such that the following three condition $\mathrm{(\mathbf A)}$, $\mathrm{(\mathbf B)}$ and $\mathrm{(\mathbf C)}$ are satisfied:
	
	$\mathrm{(\mathbf A)}$ For each chart $(U_\kappa,\kappa)$, there exist a classical symbol $a_\kappa \in \mathrm S_{2n}(\langle \boldsymbol\xi\rangle^m )$, such that for all $u \in C^\infty(\mathrm M)$ we have
	\begin{equation}\label{eq25 te}
	\Phi A_\hbar (\Psi u)= \Phi \kappa^* \Op(a_\kappa)(\kappa^{-1})^* (\Psi u),\quad \forall\; \Phi,\Psi \in C_0^\infty(U_\kappa),
	\end{equation}
	where we are extending $(\kappa^{-1})^*(\Psi u)$ as zero outside of the set $V_\kappa$.% and we are using the same notation.
	
	$\mathrm{(\mathbf B)}$ If $\Phi_j\in C_0^\infty(\mathrm M)$, $j=1,2$, with disjoint supports, then
	\begin{equation}\label{insignificante}
	\| \Phi_1 \;A_\hbar \Phi_2 \Psi\|_{\Sn} \le \mathrm O(\hbar^\infty)\|\Psi\|_{\Sn},\quad\forall\; \Psi\in C^{\infty}(\mathrm M).
	\end{equation}
	
	$\mathrm{(\mathbf C)}$ Given two charts $(U_\kappa,\kappa)$, $(U_{\nu},\nu)$ such that $U_\kappa \cap U_\nu \ne \varnothing$ we require the condition
	\begin{equation*}
	a_\kappa^{(0)}(\mathbf b,\boldsymbol\xi)=a_\nu^{(0)}((\nu \circ \kappa^{-1})\mathbf b,(\nu \circ \kappa^{-1})^*\boldsymbol\xi)\;,
	\hspace{0.5cm}\forall(\mathbf b,\boldsymbol\xi) \in T^*(\kappa(U_\kappa \cap U_\nu))\;.
	\end{equation*}
	The last condition implies that we can define the principal symbol $\wp(A_\hbar)$ of the pseudo-differential operator $A_\hbar$ by
	\begin{equation}\label{eq26 te}
	\wp(A_\hbar)(\vecx,\boldsymbol\eta)=a_\kappa^{(0)}(\kappa \vecx,(\kappa^{-1})^*\boldsymbol\eta) \;,\hspace{0.5cm} \forall\; (\vecx,\boldsymbol\eta)\in T^*(\mathrm M)\;,
	\end{equation}
	where $(U_\kappa,\kappa)$ is any chart such that $\vect x \in U_\kappa$.
\end{definition}

%We end this appendix by showing the relationship between the principal symbol of a semiclassical pseudo-differential operators $A_\hbar$ on $\Sn$ (where we are  identifing $\Sn$ with $S^{2n-1}$ via the map $\Upsilon$, which sends $\vect x=(x_1,\ldots,x_n) \in \Sn$ to $(\Re(x_1), \Im(x_1), \ldots, \Re(x_n), \Im(x_n)) \in S^{2n-1}$, to consider $\Sn$ as a smooth $2n-1$ dimensional manifold.)

Since $\Sn$ can be identified with $S^{2n-1}$ via the map $\Upsilon$, which sends $\vect x=(x_1,\ldots,x_n) \in \Sn$ to $(\Re(x_1), \Im(x_1), \ldots, \Re(x_n), \Im(x_n)) \in S^{2n-1}$, we can consider $\Sn$ as a  smooth $2n-1$ dimensional manifold.

%where we are  considering $\Sn$ as a smooth $2n-1$ dimensional manifold via the identification $\Sn$ with $S^{2n-1}$ via the map $\Upsilon$, which sends $\vect x=(x_1,\ldots,x_n) \in \Sn$ to $(\Re(x_1), \Im(x_1), \ldots, \Re(x_n), \Im(x_n)) \in S^{2n-1}$ 

%and $\mathrm T_{U^{-1}}A_\hbar \mathrm T_U$ (with $U\in \mathrm{SU}(n)$ and $\mathrm T_U$ defined as in Proposition \ref{extended covariant symbol T_U}) that will be useful to prove Theorem \ref{egorov theorem}.

Let $A_\hbar$ be a semiclassical pseudo-differential operators on $\Sn$. Based on relating corresponding properties of functions and $A_\hbar$ on a given chart and its rotated chart under the action of $\mathrm{SU}(n)$ on the complex $n$-sphere $\Sn$, we can show the relationship between the principal symbols of $A_\hbar$ and $\mathrm T_{U^{-1}}A_\hbar \mathrm T_U$ (with $U\in \mathrm{SU}(n)$ and $\mathrm T_U$ defined as in Proposition \ref{extended covariant symbol T_U}) as the following proposition establish it. This proposition will be useful to prove Theorem \ref{egorov theorem}.

%We end this appendix by showing the relationship between the principal symbols of the semiclassical pseudo-differential operators $A_\hbar$ and $\mathrm T_{U^{-1}}A_\hbar \mathrm T_U$ on $\Sn$ (with $U\in \mathrm{SU}(n)$ and $\mathrm T_U$ defined as in Proposition \ref{extended covariant symbol T_U}) that will be useful to prove Theorem \ref{egorov theorem}.
\begin{prop}\label{T_U A_hbar T_U pseudo-differential operator}
	Let $U\in \mathrm{SU}(n)$  and $A_\hbar$ be a semiclassical pseudo-di\-ffe\-ren\-tial operator on $\Sn$. Then $\mathrm T_{U^{-1}} A_\hbar \mathrm T_U$ is a semiclassical pseudo-di\-ffe\-ren\-tial operator on $\Sn$ with principal symbol
	\begin{equation*}
	\wp(T_{U^{-1}}A_\hbar T_U)(\vecx,\boldsymbol\eta)= \wp(A_\hbar)(U\vecx,(U^{-1})^*\boldsymbol\eta) \;,\hspace{0.7cm} \forall \;(\vecx,\boldsymbol\eta)\in T^*(\Sn).
	\end{equation*}
\end{prop}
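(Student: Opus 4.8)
The plan is to verify the three defining conditions $(\mathbf A)$, $(\mathbf B)$, $(\mathbf C)$ for the operator $\mathrm T_{U^{-1}} A_\hbar \mathrm T_U$ directly, by transporting each condition for $A_\hbar$ through the diffeomorphism of $\Sn$ induced by $U\in\mathrm{SU}(n)$. The key observation is that the map $\tau_U\colon\Sn\to\Sn$, $\vect x\mapsto U^{-1}\vect x$, is a smooth diffeomorphism (it is the restriction of a unitary linear map), so $\mathrm T_U = \tau_U^*$ is exactly pullback by $\tau_U$; hence conjugating a pseudo-differential operator by $\mathrm T_U$ is the same as transporting the operator by the diffeomorphism $\tau_U$. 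Since the class of semiclassical pseudo-differential operators of order zero is invariant under diffeomorphisms (this is implicit in the chart-based Definition: the symbol classes $\mathrm S_{2n}(\langle\boldsymbol\xi\rangle^m)$ are diffeomorphism invariant and condition $(\mathbf C)$ is precisely the compatibility needed for the principal symbol to be a well-defined function on $T^*\Sn$), the result will follow once we track how the principal symbol transforms.

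Concretely, first I would fix a chart $(U_\kappa,\kappa)$ for $\Sn$ and observe that $(\tau_U(U_\kappa), \kappa\circ\tau_U^{-1})$ is again a chart, call it $(U_{\kappa'},\kappa')$ with $\kappa'=\kappa\circ\tau_U^{-1}$. For $\Phi,\Psi\in C_0^\infty(U_{\kappa'})$ we have $\tau_U^*\Phi,\tau_U^*\Psi\in C_0^\infty(U_\kappa)$, and using $\mathrm T_U=\tau_U^*$ together with condition $(\mathbf A)$ for $A_\hbar$ in the chart $(U_\kappa,\kappa)$, a short computation pulling everything back through $\tau_U$ shows
\begin{equation*}
\Phi\,(\mathrm T_{U^{-1}} A_\hbar \mathrm T_U)(\Psi u)=\Phi\,(\kappa')^*\Op(a_\kappa)\big((\kappa')^{-1}\big)^*(\Psi u),
\end{equation*}
so $(\mathbf A)$ holds for $\mathrm T_{U^{-1}} A_\hbar \mathrm T_U$ with the symbol $a_{\kappa'}:=a_\kappa$ in the chart $(U_{\kappa'},\kappa')$. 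Next, condition $(\mathbf B)$ is immediate: if $\Phi_1,\Phi_2$ have disjoint supports, then so do $\tau_U^*\Phi_1,\tau_U^*\Phi_2$ (the pullback preserves supports, up to the diffeomorphism), and since $\mathrm{SU}(n)$ acts by isometries of $\Sn$ the pullbacks $\mathrm T_U,\mathrm T_{U^{-1}}$ are unitary on $L^2(\Sn)$, so the $\mathrm O(\hbar^\infty)$ bound transfers verbatim. Finally, condition $(\mathbf C)$ for $\mathrm T_{U^{-1}} A_\hbar \mathrm T_U$ in the transported charts is inherited from condition $(\mathbf C)$ for $A_\hbar$, because the transition maps between transported charts are conjugates of the original transition maps by $\tau_U$, and the relation $a_{\kappa'}^{(0)}=a_\kappa^{(0)}$ together with the cocycle identity makes the compatibility close.

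It then remains to compute the principal symbol. From $a_{\kappa'}^{(0)}=a_\kappa^{(0)}$ and formula (\ref{eq26 te}) for the principal symbol, for $(\vect x,\boldsymbol\eta)\in T^*\Sn$ choosing a chart $(U_{\kappa'},\kappa')$ containing $\vect x$ we get
\begin{equation*}
\wp(\mathrm T_{U^{-1}} A_\hbar \mathrm T_U)(\vect x,\boldsymbol\eta)=a_{\kappa'}^{(0)}(\kappa'\vect x,((\kappa')^{-1})^*\boldsymbol\eta)=a_\kappa^{(0)}(\kappa(\tau_U^{-1}\vect x),(\kappa^{-1})^*((\tau_U^{-1})^*\boldsymbol\eta)).
\end{equation*}
Since $\tau_U^{-1}\vect x=U\vect x$ and the codifferential $(\tau_U^{-1})^*$ on covectors is the adjoint (inverse transpose) of the differential, which for the linear map $\vect x\mapsto U^{-1}\vect x$ is just $(U^{-1})^*$, the right-hand side equals $\wp(A_\hbar)(U\vect x,(U^{-1})^*\boldsymbol\eta)$, which is the claimed formula. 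I expect the main obstacle to be purely bookkeeping: being careful with the distinction between the pullback on functions versus the induced action on cotangent vectors, and checking that "restrict a unitary linear map to the sphere" genuinely produces an admissible chart-change (in particular that the rotated charts still have the right domains, which for the specific charts used in the proof of Theorem \ref{egorov theorem} is handled by the explicit matrices $\mathcal R_\mathfrak a$). The analytic content is negligible — everything reduces to the diffeomorphism invariance of the pseudo-differential calculus, which is built into the chart-based definition in Appendix \ref{appendix pseudo-differential operators}.
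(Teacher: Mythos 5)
Your proposal is correct and follows essentially the same route as the paper: both verify conditions $(\mathbf A)$, $(\mathbf B)$, $(\mathbf C)$ by transporting a chart around $\vect x$ to the rotated chart around $U\vect x$ (your $(U_{\kappa'},\kappa')$ with symbol $a_\kappa$ is exactly the paper's assignment of the symbol $a_{\tilde\tau}$ of the rotated chart to the original chart), use the $\mathrm{SU}(n)$-invariance of $\mathrm d\mathbf S_n$ for $(\mathbf B)$, and then read off the principal symbol from the chart formula. No substantive difference.
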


\begin{proof}
	Let $(\vect x,\boldsymbol\eta)\in T^*(\Sn)$  and $(X_\tau,\tau)$ be  any chart such that $\vecx \in X_\tau$. Let us denote by $(X_{\tilde\tau},\tilde\tau)$ the chart given by $X_{\tilde\tau}= U X_\tau$ and $\tilde\tau=\tau \circ U^{-1}$.
	
	Let $v \in C^\infty(\Sn)$, and $\phi,\psi \in C_0^\infty(X_\tau)$. Let us denote by $\tilde v =  \mathrm T_U v$, $\tilde \psi =  \mathrm T_U \psi$ and
	$\tilde \phi = \mathrm T_U \phi$. Note that $\tilde\psi,\tilde\phi \in C_0^\infty(X_{\tilde\tau})$. From Eq. (\ref{eq25 te})
	\begin{align*}
	(\psi \mathrm T_{U^{-1}}A_\hbar \mathrm T_U (\phi v))(\vect x) & = (\tilde \psi A_\hbar \tilde\phi \tilde v)(U\vecx)\\
	& = \left[\tilde \psi \;(\tilde\tau)^* \mathrm{Op}_\hbar^t(a_{\tilde\tau})(\tilde\tau^{-1})^*(\tilde\phi\tilde v)\right](U\vecx)\\
	& = \left[\psi\tau^* \mathrm{Op}_\hbar^t(a_{\tilde \tau})(\tau^{-1})^*(\phi v)\right](\vecx)\;.
	\end{align*}
	which establishes that condition $(\mathbf A)$ holds for the operator $\mathrm T_{U^{-1}}A_\hbar \mathrm T_U$. 
	
	Condition $(\mathbf B)$ follows from Eq. (\ref{insignificante}) and the $\mathrm{SU}(n)$-invariance of $\mathrm d\Sn$. 
	
	Let us now consider another chart $(X_{\tau'},\tau')$ such that $X_\tau \bigcap X_{\tau'}=\varnothing$. Condition $(\mathbf C)$ follows from the equalities $\tau(X_\tau)=\tilde \tau({X}_{\tilde \tau})$, $\tau(X_{\tau'})=\tilde {\tau'}(X_{\tilde {\tau'}})$ and $\tilde{\tau'} \circ (\tilde \tau)^{-1}=\tau' \circ \tau^{-1}$.
	
	Thus we conclude that $\mathrm T_{U^{-1}}A_\hbar \mathrm T_U$ is a pseudo-differential operator with principal symbol (see Eq. (\ref{eq26 te})). 
	\begin{align*}
	\wp(\mathrm T_{U^{-1}}A_\hbar \mathrm T_U) (\vecx,\boldsymbol\eta) & = a_{\tilde \tau}^{(0)}(\tau \vecx, (\tau^{-1})^*\boldsymbol\eta) \\
	& = 
	%\wp (A_\hbar) ((\tilde \tau)^{-1}\tau \vecx, (\tilde \tau)^*(\tau^{-1})^*\boldsymbol\eta)=
	a_{\tilde\tau}^{(0)}(\tilde\tau \circ U\vecx,(\tilde\tau^{-1})^*(U^{-1})^* \boldsymbol\eta)\\
	& = \wp(A_\hbar)(U\vecx,(U^{-1})^*\boldsymbol\eta )\;.
	\end{align*}
\end{proof}

\section{The complex n-sphere $\Sn$ as the union of the charts $U_\mathfrak c$}\label{complex sphere union charts}

In this appendix we prove that $\Sn=\bigcup_{\mathfrak c=1}^{2n-1} U_{\mathfrak c}$ and that $\hat{\vect u}_1 \notin U_{\mathfrak a}$ for $\mathfrak a=2,\ldots,2n-1$, where $\hat{\vect u}_1=(1,0,\ldots,0)$ is a canonical vector in $\mathbb R^n$.

To prove that $\Sn=\bigcup_{\mathfrak c=1}^{2n-1} U_{\mathfrak c}$, let us first note that
\begin{equation*}\label{expresion Sn}
\Sn=\Upsilon^{-1}\left(\left\{(r\cos\theta,r\sin\theta,
\mathbf v)|-\pi<\theta\le \pi,|\mathbf v|\le 1,\mathbf v\in \mathbb R^{2n-2},r^2={1-|\mathbf v|^2}\right\} \right)
\end{equation*}
where $\Upsilon$ is the map that identify $\Sn$ with $S^{2n-1}$ (see its definition at the beginning of the proof of Theorem \ref{egorov theorem}).

Consider $\vecx \in \Sn$ and write it as $\vect x= \Upsilon^{-1}\big((r\cos \theta,r\sin \theta,\vect v)\big)$. Let us assume that $\vect x \notin U_1$. Then we have the following two cases:
\begin{enumerate}
	\item[i)] $\displaystyle 1 - \frac{1}{8(n-1)}\le |\vect v|^2\le 1$.
	\item[ii)] $\theta =\pi$ and $\displaystyle |\vect v|^2 < 1 - \frac{1}{8(n-1)}$.
\end{enumerate} 
\textbf{Case} i) Let $v_j=\max \{v_3,\ldots,v_{2n}\}$, then $v_j>\frac{1}{2\sqrt{n-1}}$, because otherwise $|\vect v|^2\le \frac{1}{2}< 1 - \frac{1}{8(n-1)}$.

Let us define $\tilde{\mathbf v}=(\tilde v_3,\ldots, \tilde v_{2n})\in \mathbb R^{2n-2}$ such that $\tilde v_i=v_i$, $i\ne j$ and $\tilde v_j=r\sin(\theta)$. Since
\begin{equation*}
\left(-\frac{r}{\tilde r}\cos(\theta)\right)^2+\left(\frac{v_j}{\tilde r}\right)^2=1,\quad \mbox{ where } \;\tilde r^2=1-|\tilde{\mathbf v}|^2
=r^2\cos^2(\theta)+v_j^2>0,
\end{equation*}
then there exist $-\pi\le \tilde \theta<\pi$  such that $\cos(\tilde \theta)=-\frac{r}{\tilde r}\cos(\theta)$ and $\sin(\tilde\theta)=
\frac{v_j}{\tilde r}$. Actually $\tilde \theta \ne \pi$ because otherwise $v_j$ would have to be equal to zero.

Since $|\tilde{\vect v}|^2=1-r^2\cos^2(\theta)-v_j^2<1-\frac{1}{8(n-1)}$ then $(\tilde \theta, \tilde{\vect v}) \in \mathfrak A$. Moreover, using the explicit expression of the matrix $\mathcal R_{j-1}$ (see Eq. (\ref{definition R_a})), one can check that $\vecx=(\kappa_{j+1}\circ \Upsilon)^{-1}(\tilde \theta,\tilde{\mathbf v})\in U_{j+1}$.

\textbf{Case} ii) Let us define  $v_j$ and the vector $\tilde{\vect v}$ as above. Note that $\tilde r^2=1-|\tilde{\vect v}|^2=r^2 + v_j^2\ge r^2=1-|\vect v|^2>0$. 

Consider $-\pi < \tilde \theta < \pi$ such that $\cos (\tilde \theta)= \frac{r}{\tilde r}$ and $\sin (\tilde \theta)=\frac{v_j}{\tilde r}$. Moreover, $|\tilde {\vect v}|^2=1-r^2-v_j^2\le |\vect v|^2< 1- \frac{1}{8(n-1)}$. Therefore, $(\tilde \theta,\tilde{\vect v}) \in \mathfrak A$ and one can check that $\vect x \in U_{j-1}$.

Let us now prove that $\hat{\vect u}_1$ only belongs to $U_{\mathfrak 1}$. Let us assume that $\hat{\vect u}_1 \in U_\mathfrak a$ for some $\mathfrak a=2,\ldots, 2n-1$, then there exist $(\theta,\vect v) \in \mathfrak A$ such that
%On the other hand, to prove that $\hat{\vect u}_1$ only belong to $U_{\mathfrak 1}$, let us note that for $\mathfrak a=2,\ldots, 2n$ and $(\theta,\vect v) \in \mathfrak A$
\begin{equation*}
\hat{\vect u}_1=(\kappa_\mathfrak a\circ \Upsilon)^{-1}(\theta,\vect v)=\Upsilon^{-1}\big((-r\cos \theta,v_{\mathfrak a+1},v_3,\ldots,v_\mathfrak a,r\sin \theta,v_{\mathfrak a+2},\ldots,v_{2n})\big),
\end{equation*}
which is impossible since $-\pi < \theta <\pi$.

\end{document}